\documentclass[12pt]{amsart}
\usepackage[margin=3cm]{geometry}

\usepackage[english]{babel}
\usepackage[normalem]{ulem}
\usepackage{xcolor}
\definecolor{TheoremRefColor}{HTML}{B22222}
\usepackage{hyperref}

\usepackage{amsmath}
\allowdisplaybreaks[2]
\usepackage{amsthm}
\usepackage{amssymb}
\usepackage{bbm}
\usepackage{ucs}
\usepackage{enumitem}
\usepackage{tikz}
\usepackage{tikz-cd}
\usepackage[utf8]{inputenc}
\usepackage[T1]{fontenc}
\usepackage{mathtools}
\usepackage{comment}
\usepackage{dsfont}
\usepackage{faktor}

\usepackage{etoolbox}
\usepackage{needspace}
\makeatletter

\patchcmd{\@setaddresses}{\/:\space}{~}{}{}
\patchcmd{\@setaddresses}{\/:\space}{~}{}{}
\patchcmd{\@setaddresses}{\/:\space}{~}{}{}

\def\l@subsection{\@tocline{2}{0pt}{2.75pc}{3pc}{}}
\makeatother

\hypersetup{
  colorlinks=true,
  linkcolor=TheoremRefColor,
  citecolor=magenta,
  urlcolor=blue
}

\makeatletter
\def\paper@styledref#1.#2\@nil#3{%
  \def\paper@refprefix{#1}%
  \ifdefstring{\paper@refprefix}{thm}{%
    \begingroup
    \setlength{\fboxsep}{1pt}%
    \setlength{\fboxrule}{0.35pt}%
    \fcolorbox{TheoremRefColor}{white}{%
      \hyperref[#3]{\paper@unboxedref*{#3}}}%
    \endgroup
  }{\paper@unboxedref{#3}}%
}
\AtBeginDocument{%
  \NewCommandCopy{\paper@unboxedref}{\ref}
  \RenewDocumentCommand{\ref}{s m}{%
    \IfBooleanTF{#1}{\paper@unboxedref*{#2}}{%
      \begingroup
      \edef\paper@refanchor{\getrefbykeydefault{#2}{anchor}{}}%
      \expandafter\paper@styledref\paper@refanchor.\@nil{#2}%
      \endgroup
    }%
  }%
}
\makeatother
\AtBeginDocument{
   \def\MR#1{}
}
\usepackage[all]{xy}

\newcommand{\C}{{\mathbb C}}

\newcommand{\Z}{{\mathbb Z}}
\newcommand{\N}{{\mathbb N}}
\newcommand{\F}{{\mathbb F}}
\newcommand{\T}{{\mathbb T}}

\newcommand{\cB}{{\mathcal B}}

\newcommand{\cD}{{\mathcal D}}
\newcommand{\cE}{{\mathcal E}}

\newcommand{\cH}{{\mathcal H}}

\newcommand{\cP}{{\mathcal P}}
\newcommand{\cN}{{\mathcal N}}

\newcommand{\cR}{{\mathcal R}}

\newcommand{\cU}{{\mathcal U}}

\newcommand{\id}{\mathrm{id}}
\newcommand{\du}{d_{\mathrm u}}
\usepackage{graphicx}

\newcommand{\Aut}{{\rm Aut}}

\newcommand{\MALG}{\operatorname{MALG}}

\newcommand{\Linf}{L^\infty(X,\nu)}

\newcommand{\norm}[1]{\lVert#1\rVert}

\numberwithin{equation}{section}

\newtheorem{thm}{Theorem}[section]
\newtheorem{cor}[thm]{Corollary}
\newtheorem{lem}[thm]{Lemma}

\newtheorem{definition}[thm]{Definition}
\newtheorem{example}[thm]{Example}
\newtheorem{remark}[thm]{Remark}

\newtheorem{proposition}[thm]{Proposition}

\theoremstyle{definition}

\title[Invariant Subalgebras]{Invariant von Neumann subalgebras of crossed products and tensor products}
\date{\today}
 \author[Amrutam]{Tattwamasi Amrutam}
\address{Institute of Mathematics of the Polish Academy of Sciences, ul. Sniadeckich 8, 00-656, Warszawa, Poland}
\email{tattwamasiamrutam@gmail.com}
\author[Dudko]{Artem Dudko}
\address{Institute of Mathematics of the Polish Academy of Sciences, ul.~\'Sniadeckich 8, 00--656 Warszawa, Poland}
\email{adudko@impan.pl}
\author[Jiang]{Yongle Jiang}
\address{
School of Mathematical Sciences, Dalian University of Technology, Dalian, 116024, China}
\email{yonglejiang@dlut.edu.cn}
\author[Skalski]{Adam Skalski}
\address{Institute of Mathematics of the Polish Academy of Sciences, ul.~\'Sniadeckich 8, 00--656 Warszawa, Poland}
\email{a.skalski@impan.pl}
\keywords{von Neumann algebra; invariant subalgebra; ISR property; crossed
product; tensor product; weakly mixing action; prime action; free group}
\subjclass[2020]{Primary 46L10; Secondary 22D25, 37A20, 37A25}

\begin{document}
\raggedbottom
\begin{abstract}
Let $\Gamma$ be a countable discrete group acting on a tracial von
Neumann algebra $(Q,\tau_Q)$ in a trace-preserving way. We study
invariant von Neumann subalgebras of $Q\rtimes\Gamma$ and
of $L(\Gamma)\bar\otimes Q$, where the latter is equipped with the diagonal action.
We show that, under the pointwise weak mixing assumption, an
invariant subalgebra whose image under the canonical conditional expectation onto $Q$ is scalar
is necessarily contained in $L(\Gamma)$. Further, under relative weak mixing and central freeness
assumptions, we determine the structure of invariant subalgebras of the crossed product
with non-trivial intersection with $Q$. 
For p.m.p.\ actions of groups with trivial amenable radical and the invariant subalgebras
rigidity property, pointwise relative weak mixing gives a complete classification of invariant subalgebras
in the tensor product case.
We construct essentially free, prime, pointwise weakly mixing p.m.p.\ actions
of $\F_n$ for every $2\leq n\leq\infty$, as well as non-prime examples
with explicitly described factors.
\end{abstract}
\maketitle

{\footnotesize\tableofcontents}

\section{Introduction}

Let $\Gamma$ be a countable discrete group and let $\sigma$ be a trace-preserving action of $\Gamma$ on a tracial von Neumann algebra
$(Q,\tau_Q)$ with separable predual. We write $u_g$, $g\in\Gamma$, for
the canonical unitaries in the von Neumann algebraic crossed product
$Q\rtimes\Gamma$. A von Neumann subalgebra $\cP\subseteq Q\rtimes\Gamma$
is called \emph{$\Gamma$-invariant} if $u_g\cP u_g^*=\cP$ for every
$g\in\Gamma$. An example of such an algebra is $B\rtimes N$, where $B$
is a $\Gamma$-invariant von Neumann subalgebra of $Q$ and
$N\trianglelefteq\Gamma$ is a normal subgroup. We ask whether these
exhaust the $\Gamma$-invariant von Neumann subalgebras of
$Q\rtimes\Gamma$. In this paper, we study this problem for weakly
mixing trace-preserving actions, both in the situation described above and in its variant for
tensor products. When $Q=\Linf$ for a probability-measure-preserving (p.m.p.)\
action $\sigma$ of $\Gamma$ on a standard probability space $(X,\nu)$,
and $\tau_Q=\nu$, the question above asks whether the structure of an invariant subalgebra
can be recovered from a factor of the action on $(X,\nu)$ and a normal subgroup $N\trianglelefteq\Gamma$.

This question was already considered by Amrutam and Hartman~\cite{amrutam2024subalgebras} in the nonsingular
setting. They classified invariant intermediate algebras containing $\Linf$
when $(X,\nu)$ is a non-trivial factor of an ergodic nonsingular action
with the SH property~\cite[Theorem~1.2]{amrutam2024subalgebras}, and
conjectured a description of all invariant
subalgebras for Furstenberg--Poisson boundary actions of higher rank
lattices. In particular, they conjectured that every invariant subalgebra of
$L^\infty(G/P)\rtimes\Gamma$ is of the form
$L^\infty(G/Q)\rtimes\Lambda$, where $Q$ is parabolic and
$\Lambda\trianglelefteq\Gamma$~\cite[Conjecture, p.~381]{amrutam2024subalgebras}.
For the higher rank boundary actions considered there,
Zhou~\cite[Theorem~A]{zhou2026invariantpoisson} proved this
under the additional assumption that the invariant subalgebra in question has a nonscalar
intersection with either $L^\infty(G/P)$ or the group von Neumann algebra $L(\Gamma)$.
Jiang and Liu~\cite[Question~4.5]{JL} asked a related question for the
action of $\mathrm{PSL}_2(\Z)$ on $\T^2/\{\pm1\}$ induced by the
standard linear action. The present paper takes a first step toward
such a classification for weakly mixing p.m.p.~actions.

The group algebra case, corresponding to $X=\{*\}$ and $Q=\C1$, provides a model for this problem. A group
has the \emph{invariant subalgebras rigidity property}, or \emph{ISR
property}, if every invariant von Neumann subalgebra of $L(\Gamma)$
is $L(N)$ for some $N\trianglelefteq\Gamma$~\cite{amrutam2023invariant}.
\label{def:ISR}
Rigidity results for higher rank lattices were established by
Alekseev and Brugger~\cite{alekseev2021rigidity} and Kalantar and
Panagopoulos~\cite{kalantar2023invariant}.
Amrutam and Jiang~\cite{amrutam2023invariant} established it for
torsion-free non-amenable hyperbolic groups, and Chifan, Das and
Sun~\cite{chifan2023invariant} for ICC acylindrically hyperbolic groups.
Further results appear in
\cite{jiang2024example,dudko2024character,amrutam2025invariant,JL,jiang2026classification, amrutam2026relative, amrutam2026thompson}.
The corresponding problem for reduced group $C^*$-algebras was
studied in~\cite{kalantar2023invariant,amrutam2026invariant,amrutam2026psln}.

For crossed products, the dynamics introduces another source of
subalgebras. The theory of intermediate algebras shows how this
structure can be recovered when part of the crossed product is
already contained in the algebra. Subgroup correspondences for
crossed-product and fixed-point inclusions were established by
Choda~\cite{choda1978galois}, Izumi, Longo and
Popa~\cite{izumi1998galois}, and Cameron and
Smith~\cite{Cameron2015BimodulesIC,cameron2016intermediate}.
Their $C^*$-algebraic counterparts include the results of
Izumi~\cite{izumi2002inclusions} and Cameron and
Smith~\cite{cameron2019galois}.
Suzuki~\cite{Suz} recovered intermediate dynamical extensions from
intermediate crossed products of essentially free actions. For algebras
containing the group algebra, Amrutam~\cite{amrutam2021intermediate}
and Amrutam, Glasner and Glasner~\cite{amrutam2024reflecting} (also see \cite{amrutam2023intermediate}) gave
conditions under which all intermediate algebras arise from dynamical
factors. The latter authors also showed that such a description can
fail and described intermediate algebras which do not arise from
dynamical factors~\cite{amrutam2024nonabelian}.

An arbitrary invariant subalgebra need not contain either $Q$ or the group algebra. Thus the first question is
how to locate its Fourier coefficients in $Q$. Let $\mathbb{E}$ denote
the canonical conditional expectation from $Q\rtimes\Gamma$
onto $Q$. If $\cP=B\rtimes N$, then $\mathbb{E}(\cP)=B\subseteq \cP$.
Any classification of this form must therefore account for the
implication $\mathbb{E}(\cP)=\C1\Rightarrow\cP\subseteq L(\Gamma)$.
This is a difficult problem in its own right, and our first result proves it
under the pointwise weak mixing assumption.
The same argument applies to $L(\Gamma)\bar\otimes Q$ with the
diagonal action given by
$\theta_s(\lambda_t\otimes a)=\lambda_{sts^{-1}}\otimes\sigma_s(a)$
for $s,t\in\Gamma$ and $a\in Q$, where $\lambda_t$ denotes the canonical group unitary in $L(\Gamma)$. In this setting,
$\mathbb{E}$ denotes the expectation onto $\mathbb{C}1\bar{\otimes}Q$. The action $\sigma$ is called \emph{pointwise weakly mixing} if $\sigma_g$ is weakly mixing for every $g \in \Gamma$, $g \neq e$ (see Definition~\ref{def: relative weak mixing}).

\begin{thm}\label{thm:scalar-expectation}
Let $\Gamma$ be a countable discrete group and let
$\sigma$ be a pointwise weakly mixing trace-preserving action of $\Gamma$ on a tracial von Neumann algebra $(Q,\tau_Q)$. If
$\cP\subseteq Q\rtimes\Gamma$ is a $\Gamma$-invariant von Neumann
subalgebra and $\mathbb{E}(\cP)=\C1$, then
$\cP\subseteq L(\Gamma)$. The same conclusion holds for a subalgebra
$\cP\subseteq L(\Gamma)\bar\otimes Q$ invariant under the diagonal
action and satisfying $\mathbb{E}(\cP)=\C1$. In  both cases, if $\Gamma$ also has the
ISR property, then $\cP=L(N)$ for some $N\trianglelefteq\Gamma$.
\end{thm}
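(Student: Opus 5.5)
The plan is to show that every Fourier coefficient of every element of $\cP$ is a scalar. For $x\in\cP$ write $x=\sum_{h}x_hu_h$ with $x_h=\mathbb{E}(xu_h^*)\in Q$. Put $x_h^0=x_h-\tau_Q(x_h)1$. The goal is $x_h^0=0$ for all $h$, since then $x\in L(\Gamma)$. Once $\cP\subseteq L(\Gamma)$ is known, $\cP$ is an invariant subalgebra of $L(\Gamma)$ in the sense of the ISR definition, and the last sentence of the theorem is immediate.

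The basic tool is that $\cP$ is closed under products and under $\mathrm{Ad}(u_s)$. Hence, for $x,y\in\cP$ and $s\in\Gamma$,
\[
\mathbb{E}\bigl(x\,u_s y u_s^*\bigr)\in\C1 .
\]
Expanding in Fourier coefficients, this element equals $\sum_h x_h\,\sigma_h\bigl(\sigma_s(y_{k(h,s)})\bigr)$, where $k(h,s)=s^{-1}h^{-1}s$ is the index whose group element cancels $u_h$. Since the left side is a scalar, pairing it with any $a\in Q$ with $\tau_Q(a)=0$ gives zero:
\[
\sum_h \tau_Q\bigl(a\,x_h\,\sigma_{hs}(y_{k(h,s)})\bigr)=0 .
\]
We will choose $y$ among $x^*$ and $u_t x^* u_t^*$, and let $s$ vary inside the cyclic group generated by a fixed $g\neq e$.

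Weak mixing of $\sigma_g$ then lets us push $\sigma_{g^{n}}$ to the trace along a subsequence. Concretely, for finitely many coefficients at a time we can choose $n_k$ so that $\tau_Q\bigl(b\,\sigma_{g^{n_k}}(c)\bigr)\to\tau_Q(b)\tau_Q(c)$ for the relevant $b,c$. Passing to such a subsequence splits each identity into a "mixing" part, which converges to products of traces, and a part that does not move. Comparing with the identity $\mathbb{E}(xy)\in\C1$, which has no conjugation, should kill the cross terms. The expected outcome is an identity of the form $\sum_{h}\|x_h^0\|_2^2=0$, or at least $\|x_{h}^0\|_2=0$ for a chosen $h$. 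A truncation argument, approximating $x$ in $\|\cdot\|_2$ by finitely many Fourier terms with the tail controlled uniformly, justifies the use of finitely many coefficients at a time.

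The main obstacle is that the index $k(h,s)$ moves with $s$: conjugation by $u_s$ permutes Fourier indices. So one cannot directly fix a single coefficient and let weak mixing act on it. I would first restrict $s$ to powers of an element commuting with $h$, for instance $s=h^n$ itself, which makes $k$ fixed. I would then use pointwise weak mixing of $\sigma_h$ (or of $\sigma_{h^{n+1}}$) for that specific $h\neq e$. The coefficient $x_e$ can be handled separately, since $\mathbb{E}(x)$ is a scalar by hypothesis.

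The tensor case $L(\Gamma)\bar\otimes Q$ is handled by the same computation. There, $x=\sum_t\lambda_t\otimes x_t$, and the diagonal action gives $\theta_s(x)=\sum_t\lambda_{sts^{-1}}\otimes\sigma_s(x_t)$. Products multiply the $L(\Gamma)$-legs in the same way, so the same expansion of $\mathbb{E}(x\,\theta_s(y))$ and the same weak mixing argument with $s$ in the centralizer of the relevant index apply.
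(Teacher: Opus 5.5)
The decisive step of your plan, the one you leave as ``should kill the cross terms'', is missing, and the mechanism you propose for it cannot work.

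First, taking $s=g^n$ with $g$ commuting with one chosen index freezes only that term. In $\mathbb{E}(xu_syu_s^*)=\sum_{h'}x_{h'}\sigma_{h's}(y_{s^{-1}h'^{-1}s})$, every index $h'\notin C_\Gamma(g)$ still moves with $n$. Infinite conjugation orbits eventually leave any finite truncation set. Finite orbits outside $C_\Gamma(g)$, however, keep returning; they exist whenever some $g^m$ commutes with an element that $g$ does not commute with. The paper runs this Fourier-coefficient route only in Theorem~\ref{thm:gen}, where the ECI property is assumed precisely to discard such terms, and the present statement has no such hypothesis.

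Second, and more seriously, pairing with a \emph{fixed} mean-zero $a$ and using $\tau_Q(b\,\sigma_{g^{n_k}}(c))\to\tau_Q(b)\tau_Q(c)$ only produces limit relations of the form $\sum_{h}\overline{\tau_Q(x_{\pi(h)})}\,\tau_Q(a\,x_h^0)\approx0$. Here $y=x^*$ and $\pi$ permutes the frozen indices; the case $y=u_tx^*u_t^*$ is similar. These relations are linear in the $x_h^0$, with coefficients taken from $\mathbb{E}_{L(\Gamma)}(x)$. So they are vacuous for every $x\in\cP$ with $\mathbb{E}_{L(\Gamma)}(x)=0$, which are exactly the elements you need to exclude. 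Comparing with $\mathbb{E}(xx^*)\in\C1$ does not help. The quadratic part $\sum_hx_h^0(x_h^0)^*$ can only be tested against mean-zero $a$, which shows at best that it is a scalar, not that $\sum_h\|x_h^0\|_2^2=0$.

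What is missing is a positive quantity. The paper gets it by testing against $\mathbb{E}_{\cP}$ rather than expanding elements of $\cP$. Since $\mathbb{E}(\cP)=\C1\subseteq\cP$, the algebras $\cP$ and $Q$ form a commuting square, so $\mathbb{E}_{\cP}(a)=\tau_Q(a)1$ for $a\in Q$ (Lemma~\ref{lem:commuting-square}). It then suffices to show $\mathbb{E}_{\cP}(fu_g)=0$ for $g\neq e$ and $\tau_Q(f)=0$, because $\tau_Q(f^*x_g)=\langle x,\mathbb{E}_{\cP}(fu_g)\rangle$. The test element $fu_g$ has a single Fourier index, fixed by $\mathrm{Ad}(u_g)$, so nothing moves. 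With $a_k=\sigma_g^{m_k}(f)u_g$ one has $a_ka_\ell^*\in Q$, hence $\mathbb{E}_{\cP}(a_ka_\ell^*)=\tau_Q\bigl(\sigma_g^{m_k}(f)\sigma_g^{m_\ell}(f)^*\bigr)1$, which is small along a weak mixing sequence. Lemma~\ref{lem:stinespring-l2-small} then yields $\|\mathbb{E}_{\cP}(fu_g)\|_2^2=\tau\bigl(\mathbb{E}_{\cP}(a_k)^*\mathbb{E}_{\cP}(a_k)\bigr)\to0$; this is Theorem~\ref{thm:cs-engine}, and the tensor case is the same with $w=\lambda_g\otimes1$.

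If you want to keep the Fourier route, you must pair with the moving element $\xi^*\sigma_g^n(\xi)$, $\tau_Q(\xi)=0$, and take Ces\`aro averages as in Theorem~\ref{thm:gen}. The mean ergodic theorem then gives the lower bound $|\tau_Q(\xi x_g)|^2$. You would still have to treat the finite orbits outside $C_\Gamma(g)$. One way is to check that each such orbit $O$ contributes $\frac{1}{|O|}\bigl|\sum_{h\in O}\tau_Q(\xi x_h)\bigr|^2\ge0$ to the Ces\`aro limit, using ergodicity of all powers of $\sigma_g$. Neither ingredient appears in your sketch.
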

Surprisingly, we do not need the group to be ICC. The proof follows from a more general
commuting-square result below.

The condition $\mathbb{E}(\cP)\subseteq\cP$ is equivalent to
$\mathbb{E}\mathbb{E}_{\cP}=\mathbb{E}_{\cP}\mathbb{E}$,
where $\mathbb{E}_{\cP}$ is the trace-preserving expectation onto
$\cP$ (see Lemma~\ref{lem:commuting-square}). In other words,
$\cP$ and $Q$ form a \emph{commuting square} in the sense of \cite{Popa_csq_acta_math,Popa_csq_invent_math,GDJ_csq}.

The next theorem involves the notion of \emph{pointwise relative weak mixing}, which will be introduced in Section~\ref{sec:prelim}.

%In the following theorem, a (non-commutative) $\Gamma$-factor of $(Q,\tau_Q)$ means a $\Gamma$-invariant von Neumann subalgebra
%$B\subseteq Q$. It is proper if $B\neq Q$
%and non-trivial if $B\neq\C1$. When $Q=\Linf$, the $\Gamma$-factors are
%the subalgebras $B=L^\infty(Y,\eta)\subseteq\Linf$ given by the factors
%$(Y,\eta)$ of the action. Relative weak mixing over $\C1$ means
%ordinary weak mixing.

\begin{thm}\label{thm:cs-intro}
Let $\Gamma$ be a countable discrete group and let
$\sigma$ be a trace-preserving action of $\Gamma$ on a tracial von Neumann algebra $(Q,\tau_Q)$.
Let $\cP\subseteq Q\rtimes\Gamma$ be a $\Gamma$-invariant von
Neumann subalgebra such that $\mathbb{E}(\cP)\subseteq\cP$, and let
$B=\mathbb{E}(\cP)$. If the action  $\sigma$ is pointwise weakly mixing relative to $B$, then $\cP\subseteq B\rtimes\Gamma$.
The same holds for a von Neumann subalgebra of $L(\Gamma)\bar\otimes Q$
invariant under the diagonal action, where now 
$\mathbb{E}(\cP)=\mathbb{C}1\bar{\otimes} B$ and the conclusion is
$\cP\subseteq L(\Gamma)\bar\otimes B$.
\end{thm}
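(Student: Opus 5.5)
The plan is to work with Fourier expansions and use the commuting-square condition as the only link between $\cP$ and $Q$. By Lemma~\ref{lem:commuting-square}, the hypothesis $\mathbb{E}(\cP)\subseteq\cP$ with $B=\mathbb{E}(\cP)$ gives $B\subseteq\cP$ and $\mathbb{E}(y)\in B$ for every $y\in\cP$.

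Fix $x\in\cP$ with expansion $x=\sum_k x_k u_k$, where $x_k=\mathbb{E}(xu_k^*)\in Q$. The goal is $x_k\in B$ for every $k$; then $x\in B\rtimes\Gamma$. For $k=e$ this is just $x_e=\mathbb{E}(x)\in B$. So fix $g\neq e$ and set $\zeta=x_g-\mathbb{E}_B(x_g)$. We want to show $\zeta=0$.

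The main tool is to produce elements of $B$ from products inside $\cP$ and let the dynamics of $\sigma_g$ act on them. For $n\in\Z$ the conjugate $y_n=u_{g^n}xu_{g^n}^*$ lies in $\cP$, and its $g$-th coefficient is $\sigma_{g^n}(x_g)$, because $g$ commutes with $g^n$. For $b\in B\subseteq\cP$, the product $y_n b x^*$ also lies in $\cP$, so
\[
\mathbb{E}(y_n b x^*)=\sum_{k}\sigma_{g^n}\bigl(x_{g^{-n}kg^n}\bigr)\,\sigma_k(b)\,x_k^*\in B .
\]
I would test this against elements of $Q\ominus B$ (equivalently, apply $1-\mathbb{E}_B$, which kills the left-hand side) and then average over $n$.

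Pointwise weak mixing of $\sigma_g$ relative to $B$ says that $\sigma_{g^n}$ mixes $L^2(Q)\ominus L^2(B)$ relative to $B$ in the Cesàro sense. It should therefore make the diagonal term $k=g$ with $\zeta$ in place of $x_g$ asymptotically orthogonal to everything except its own compact part. The intended outcome is an identity of the form $\lim_N\frac1N\sum_{n<N}\|\mathbb{E}_B(\zeta^*\sigma_{g^n}(\zeta))\|_2^2=0$. Combining this with the membership in $B$ above should force $\|\zeta\|_2=0$.

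For the tensor product $L(\Gamma)\bar\otimes Q$ with the diagonal action, I would run the same computation. Write $x=\sum_k\lambda_k\otimes x_k$; conjugating by the implementing unitaries of $g^n$ gives coefficients $\sigma_{g^n}(x_{g^{-n}kg^n})$, exactly as above. Here $\mathbb{E}(\cP)=\C1\bar\otimes B$, so the conclusion becomes $\cP\subseteq L(\Gamma)\bar\otimes B$.

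The main obstacle is the cross terms: the indices $k$ with $g^{-n}kg^n\neq k$, and the indices $k\neq g$, both contaminate the expression. To isolate the $g$-coefficient I would first try to replace $x$ by $xu_g^*$-type truncations. Since those are not in $\cP$, I would instead use finite sums of $\ell^2$-approximations $x\approx\sum_{k\in F}x_ku_k$ with $F$ finite, and exploit that $\|x_k\|_2\to0$ off $F$. The diagonal term (indices in the centralizer of $g$) would be controlled by weak mixing of $\sigma_g$. The terms with $k$ outside the centralizer have infinite conjugacy orbit under $g^n$, or finite orbit, where one can pass to a power $g^m$; those would be controlled by the decay of $\|x_{g^{-n}kg^n}\|_2$. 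This separation of conjugacy classes of $g$ is where I expect the real work, and where the pointwise (rather than merely global) hypothesis is needed.
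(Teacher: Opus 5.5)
Your sketch never supplies the step that turns the membership $\mathbb{E}(y_nbx^*)\in B$ into $\zeta=0$, and the mechanism it gestures at does not work.

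The identity you call the intended outcome, $\lim_N\frac1N\sum_{n<N}\|\mathbb{E}_B(\zeta^*\sigma_{g^n}(\zeta))\|_2^2=0$, is not something to derive. It is the relative weak mixing hypothesis applied to $\zeta$, it holds for every $\zeta\perp B$, and it carries no information about $\cP$.

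Testing against fixed vectors of $Q\ominus B$ and averaging over $n$ does not help either. Ces\`aro averaging replaces each moving coefficient $\sigma_{g^n}(x_k)$ by its $\sigma_g$-invariant part. That part lies in $L^2(B)$, so $\zeta$ is killed, and it vanishes altogether when, say, $\sigma_g$ is ergodic and $\tau_Q(x_k)=0$. The surviving relations can therefore be vacuous.

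You also have the roles reversed: relative weak mixing is not what makes the diagonal terms small. What works is the mechanism in the proof of Theorem~\ref{thm:gen}. Take $b=1$, i.e.\ $c_n=\mathbb{E}(u_g^nxu_g^{-n}x^*)\in B$, and pair it with the $n$-dependent element $\xi^*\sigma_g^n(\xi)$, where $\xi\in Q$, $\mathbb{E}_B(\xi)=0$ and $\|\xi\|\le1$.
\begin{itemize}
\item Since $c_n\in B$, we have $|\tau_Q(\xi^*\sigma_g^n(\xi)c_n)|\le\|\mathbb{E}_B(\xi^*\sigma_g^n(\xi))\|_2$, which is Ces\`aro-small by relative weak mixing.
\item By traciality, each diagonal term becomes $\tau_Q(\xi^*\sigma_g^n(\xi)\sigma_g^n(x_k)x_k^*)=\langle\sigma_g^n(\xi x_k),\xi x_k\rangle$. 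Its Ces\`aro limit is at least $|\tau_Q(\xi x_k)|^2$ by the mean ergodic theorem.
\end{itemize}
Letting the truncation error tend to zero gives $\tau_Q(\xi x_g)=0$ for all $\xi\perp B$, i.e.\ $x_g\in B$. This positivity trick is the missing idea.

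By contrast, the conjugacy bookkeeping you expect to be the real work is routine, and passing to a power is the right fix. Pick $m\ge1$ such that $h=g^m$ commutes with every $k$ in the truncation set $F$ whose $\langle g\rangle$-conjugation orbit is finite. Every other $k\in F$ has $g^jkg^{-j}\neq k$ for all $j\neq0$. So $n\mapsto h^nkh^{-n}$ is injective and leaves $F$ for all large $n$. The pairing argument then runs with $h$ in place of $g$: pointwise relative weak mixing covers $\sigma_h$, and if $g$ has finite order, relative weak mixing of $\sigma_g$ already forces $B=Q$.

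Repaired this way, your route works in both ambient algebras and only needs $c_n\in\mathbb{E}(\cP)\subseteq\mathbb{E}(\cP)''$, not the commuting square. It would in fact give Theorem~\ref{thm:gen} without the ECI assumption.

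The paper argues dually and avoids Fourier bookkeeping entirely. It suffices to show $\mathbb{E}_{\cP}(fu_s)=0$ whenever $\mathbb{E}_B(f)=0$, since $\tau_Q(f^*x_s)=\langle x,\mathbb{E}_{\cP}(fu_s)\rangle$ for $x\in\cP$. As $\operatorname{Ad}(u_s)$ fixes $u_s$, the elements $a_k=\sigma_s^{m_k}(f)u_s$ satisfy $a_ka_\ell^*\in Q$. The commuting square then gives $\mathbb{E}_{\cP}(a_ka_\ell^*)=\mathbb{E}_B(a_ka_\ell^*)$, which is small along the sequence from Lemma~\ref{lem:relative-pairwise-wm-selection}. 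Lemma~\ref{lem:stinespring-l2-small} yields $\tau\bigl(\mathbb{E}_{\cP}(a_k)^*\mathbb{E}_{\cP}(a_k)\bigr)\to0$, and this quantity equals $\|\mathbb{E}_{\cP}(fu_s)\|_2^2$ for every $k$.
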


The scalar-expectation theorem is the case $B=\C1$.
In Section~\ref{sec:commuting-square} we prove an abstract version,
Theorem~\ref{thm:cs-engine}, and apply it to both crossed products and
tensor products. For crossed products, central $\Gamma$-freeness of the
inclusion $\cP\cap Q\subseteq Q$ (see Definition~\ref{def:central-free})
implies the commuting-square
condition by~\cite[proof of Theorem~4.6]{Suz} (see Lemma~\ref{lem:central-free-expectation}). This gives the following result. Here a (noncommutative) $\Gamma$-factor of $(Q,\tau_Q)$ means a $\Gamma$-invariant von Neumann subalgebra $B\subseteq Q$. It is proper if $B\neq Q$
and non-trivial if $B\neq\C1$. When $Q=\Linf$, the $\Gamma$-factors are the subalgebras $B=L^\infty(Y,\eta)\subseteq\Linf$ given by the factors $(Y,\eta)$ of the action in the usual dynamical sense.

\begin{samepage}
\begin{thm}\label{thm:main}
Let $\Gamma$ be a countable discrete group and let
$\sigma$ be a trace-preserving action of $\Gamma$ on a tracial von Neumann
algebra $(Q,\tau_Q)$ such
that
\begin{enumerate}
\item[\textup{(a)}] the action is pointwise weakly mixing relative to every proper $\Gamma$-factor of $(Q,\tau_Q)$, including
the trivial factor;
\item[\textup{(b)}] for every non-trivial $\Gamma$-factor $B$ of $(Q,\tau_Q)$,
the inclusion $B\subseteq Q$ is centrally $\Gamma$-free.
\end{enumerate}
Let $\cP\subseteq Q\rtimes\Gamma$ be a $\Gamma$-invariant von
Neumann subalgebra. Then
\begin{enumerate}
\item[\textup{(i)}] if $\cP\cap Q\neq\C1$, then
$\cP=(\cP\cap Q)\rtimes N$ for some $N\trianglelefteq\Gamma$;
\item[\textup{(ii)}] if $\mathbb{E}(\cP)=\C1$, then
$\cP\subseteq L(\Gamma)$; if moreover $\Gamma$ has the ISR property,
then $\cP=L(N)$ for some $N\trianglelefteq\Gamma$.
\end{enumerate}
\end{thm}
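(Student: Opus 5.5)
Part (ii) is immediate from Theorem~\ref{thm:scalar-expectation}. By hypothesis (a), applied to the trivial proper factor $\C1$ (assuming $Q\neq\C1$; otherwise the statement is just the group case), the action is pointwise weakly mixing. Hence $\mathbb{E}(\cP)=\C1$ forces $\cP\subseteq L(\Gamma)$, and under the ISR property $\cP=L(N)$. The real work is part (i).

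For (i), set $B=\cP\cap Q$. This is a $\Gamma$-invariant von Neumann subalgebra of $Q$, since both $\cP$ and $Q$ are invariant under $\mathrm{Ad}\,u_g$. It is non-trivial by assumption, so it is a non-trivial $\Gamma$-factor.

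\textbf{Step 1: $\mathbb{E}(\cP)=B$.} Hypothesis (b) says $B\subseteq Q$ is centrally $\Gamma$-free. Lemma~\ref{lem:central-free-expectation} (Suzuki's argument) then gives $\mathbb{E}(\cP)\subseteq\cP$. Since $\mathbb{E}(\cP)\subseteq Q$, we get $\mathbb{E}(\cP)\subseteq \cP\cap Q=B$. The reverse inclusion is trivial, because $\mathbb{E}$ restricts to the identity on $B\subseteq\cP$.

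\textbf{Step 2: $\cP\subseteq B\rtimes\Gamma$.} If $B\neq Q$, then $B$ is a proper factor. By (a) the action is pointwise weakly mixing relative to $B$, and Theorem~\ref{thm:cs-intro} gives $\cP\subseteq B\rtimes\Gamma$. If $B=Q$, this inclusion holds trivially. In either case we now have $B\subseteq\cP\subseteq B\rtimes\Gamma$, and $\cP$ is $\Gamma$-invariant.

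\textbf{Step 3: identify $\cP$ as $B\rtimes N$.} Define $N=\{g\in\Gamma:\ \mathbb{E}(\cP u_g^*)\neq 0\}$. Because $\cP$ is an intermediate algebra containing $B$, the space $\mathbb{E}(\cP u_g^*)$ is a $B$-bimodule: for $b,b'\in B$ one has
\[
\mathbb{E}(b x u_g^* b')=b\,\mathbb{E}(xu_g^*)\,\sigma_g(b').
\]
It is also $\sigma$-twisted in the usual way. The key claim is that whenever the Fourier coefficient $x_g=\mathbb{E}(xu_g^*)$ of some $x\in\cP$ is nonzero, we get $u_g\in\cP$. Granting this, $N$ is a subgroup (because $\cP$ is an algebra closed under adjoints), it is normal (because $\cP$ is $\Gamma$-invariant), and $\cP=B\rtimes N$. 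The equality follows since every element of $\cP$ has its Fourier expansion supported in $N$, and $B\rtimes N\subseteq\cP$.

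To prove the claim, take $x\in\cP$ with $x_g\neq0$. Since $B\subseteq\cP$, the elements $bxb'$ lie in $\cP$, so the bimodule $\overline{B x_g B}^{\,\sigma_g}$ of $g$-th coefficients realized inside $\cP$ is a twisted $B$-$B$ bimodule. Central $\Gamma$-freeness of $B\subseteq Q$ is exactly the condition designed to rule out nonzero intertwiners $a\in Q$ with $ab=\sigma_g(b)a$ unless things are inner in the appropriate sense. Following the proof of Theorem~4.6 in \cite{Suz}, one uses it to produce, inside $\cP\cap(B\rtimes\Gamma)$, elements $y$ whose $g$-th coefficient is a nonzero element of $B$ and whose other coefficients are small. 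One obtains these by averaging $x$ over unitaries of $B$ (or of the relevant central projections) so as to kill the coefficients at $h\neq g$ and keep the one at $g$. A polar decomposition/support argument then yields a nonzero projection $p\in B$ with $pu_g\in\cP$. Finally, using invariance and maximality, one passes to $u_g\in\cP$: the union of such projections is a $\sigma$-invariant projection in $B$, which one argues is central and hence equal to $1$ by ergodicity. Ergodicity holds because (a) with the trivial factor gives weak mixing.

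I expect this last step, extracting $u_g\in\cP$ from a nonzero Fourier coefficient, to be the main obstacle. I would handle it by invoking Suzuki's averaging argument in the form of Lemma~\ref{lem:central-free-expectation} rather than redoing it.
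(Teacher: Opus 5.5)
Your treatment of part (ii) and Steps 1--2 is the same as the paper's: Lemma~\ref{lem:central-free-expectation} gives the commuting square, and Corollary~\ref{cor:cs-crossed} gives $B\subseteq\cP\subseteq B\rtimes\Gamma$. The gap is Step 3. The claim that a nonzero coefficient $x_g$ of some $x\in\cP$ forces $u_g\in\cP$ is only gestured at, and the tool you plan to cite does not deliver it.

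\begin{itemize}
\item Lemma~\ref{lem:central-free-expectation} controls only the coefficient at $e$. To reach $x_g=\mathbb{E}(xu_g^*)$ you would have to apply it to $xu_g^*$, which is not in $\cP$, and you would only learn $x_g\in W^*(B,xu_g^*)$.
\item Averaging over unitaries $w\in B$ does not isolate the $g$-term either, because it conjugates it: $w\,x_gu_g\,\sigma_{g^{-1}}(w)^*=wx_gw^*u_g$.
\end{itemize}

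What works is a twisted version of Suzuki's compression. For $h\neq g$, take $p_j\in B^\omega\cap Q'$ as in Definition~\ref{def:central-free} for $s=hg^{-1}$, and set $\Phi(y)=\sum_j p_j\,y\,\sigma^\omega_{g^{-1}}(p_j)$. Since each $p_j$ commutes with $Q$, you get $\Phi(y_tu_t)=\sum_j y_t\,p_j\sigma^\omega_{tg^{-1}}(p_j)\,u_t$. This fixes the term at $t=g$, kills the term at $t=h$, and does not increase the $L^2$-norm of the others. Moreover $\mathbb{E}_M\circ\Phi$ maps $\cP$ into $\cP$, because $B^\omega\subseteq\cP^\omega$. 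Iterating over finite sets and taking the bounded $\|\cdot\|_2$-limit, as in the proof of the lemma, gives $x_gu_g\in\cP$ exactly. After that, no polar decomposition is needed.

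Your last step also needs correcting. The set $J_g=\{b\in B\mid bu_g\in\cP\}$ is a weak$^*$-closed two-sided ideal $Bz_g$ with $z_g$ central; it is a right ideal because $bu_gb'=b\sigma_g(b')u_g$. However, $z_g$ is not $\sigma$-invariant: one only has $\sigma_h(z_g)=z_{hgh^{-1}}$. It is $\sigma_g$-invariant, so you must use ergodicity of the single automorphism $\sigma_g|_B$, which pointwise weak mixing supplies. Ergodicity of the action as a whole would not force $z_g=1$.

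With these repairs your route is correct, and it even shows that $\cP$ contains every Fourier term $x_gu_g$ of its elements. The paper's Lemma~\ref{lem:groupoid-to-normal} is much shorter and avoids any further ultrapower work. It uses the second half of Lemma~\ref{lem:central-free-expectation}, namely $B'\cap(B\rtimes\Gamma)=Z(B)$. The element $v_s=u_s^*\mathbb{E}_{\cP}(u_s)$ commutes with $\cP$, by $\Gamma$-invariance and bimodularity, so it lies in $Z(B)$. It is also $\sigma_s$-fixed, so ergodicity of $\sigma_s$ makes it a scalar $c_s$, and idempotence of $\mathbb{E}_{\cP}$ gives $c_s\in\{0,1\}$. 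Finally, $\mathbb{E}_{\cP}(bu_s)=b\,\mathbb{E}_{\cP}(u_s)=0$ for $s\notin N$ kills the remaining Fourier coefficients.
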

\end{samepage}
Again, note that we do not require the group $\Gamma$ to be ICC. However, if
$Q\neq\C1$, condition~\textup{(a)} forces $\Gamma$ to be
torsion-free (a periodic trace-preserving automorphism of a tracial
von Neumann algebra $Q\neq\C1$ cannot be weakly mixing).

When $Q=\Linf$ for a p.m.p.\ action on $(X,\nu)$, the inclusion of a
$\Gamma$-factor $B=L^\infty(Y,\eta)$ in $\Linf$ is \emph{centrally
$\Gamma$-free} if and only if the factor action
$\Gamma\curvearrowright(Y,\eta)$ is essentially free (see
Subsection~\ref{subsec:central-free}). Thus, in the commutative case,
condition~\textup{(b)} says that the action on every non-trivial factor
of $(X,\nu)$ is essentially free, and Theorem~\ref{thm:main} contains
the corresponding statement for p.m.p.\ actions.

The two assertions do not cover the case
$\cP\cap Q=\C1$ with $\mathbb{E}(\cP)\neq\C1$.
This case can occur under the hypotheses of the theorem. Indeed,
let an action of $\Z$ be induced by a mixing prime transformation of a
standard nonatomic probability space. Its only proper factor is
trivial, it is pointwise weakly mixing and has a null fixed-point
set. Thus \textup{(a)} and \textup{(b)} hold, the latter by the remark above.
Example~\ref{rem:nonicc-ideals}, motivated by
\cite[Theorem~5.12]{amrutam2024nonabelian}, gives an invariant
subalgebra with precisely these properties. Thus the hypotheses do
not imply that every invariant subalgebra is a canonically embedded
crossed product. Under \textup{(b)}, the missing case is exactly the
failure of the commuting-square condition; see
Remark~\ref{rem:open-case-cs}.

Without a commuting square, we require combinatorial control of the Fourier sums. For this purpose, we introduce the \emph{escape of conjuagte iterates (ECI) property} (see Definition~\ref{def:strongeci}); for ICC groups it is equivalent to the
\emph{unique roots property}, namely that $x^m=y^m$ implies $x=y$
for every $m\in\N$ and $x,y\in\Gamma$.
Theorem~\ref{thm:gen} proves
$\cP\subseteq\mathbb{E}(\cP)''\rtimes\Gamma$ when $\Gamma$ has the
ECI property and the action is pointwise weakly mixing relative
to $\mathbb{E}(\cP)''$.

We now turn our attention to classifying the invariant subalgebras of the tensor product von Neumann algebra $L(\Gamma)\bar\otimes\Linf$ equipped with the diagonal action. Earlier tensor-product results classify
invariant intermediate algebras containing an entire tensor factor;
see \cite{GeKadison1996,amrutam2025tensorcontinuous,amrutam2025productwstar}.
The theorem below concerns arbitrary invariant subalgebras. To our
knowledge, it is the first such classification in this diagonal,
weakly mixing p.m.p.\ setting. We denote by $Z(\cP)$ the centre of the algebra $\cP$.
\begin{thm}\label{thm:main-tensor}
Let $\Gamma$ be a countable group with trivial amenable radical and
the ISR property. Let
$\sigma$ be a p.m.p.~action of $\Gamma$ on $(X,\nu)$ which
is pointwise weakly
mixing relative to every proper $\Gamma$-factor, including the
trivial factor. If
$\cP\subseteq L(\Gamma)\bar\otimes\Linf$ is invariant under the
diagonal action, then $\cP=L(N)\bar\otimes B$ and
$Z(\cP)=\C1\bar\otimes B$
for some normal subgroup $N\trianglelefteq\Gamma$ and some
$\Gamma$-invariant von Neumann subalgebra $B\subseteq\Linf$.
\end{thm}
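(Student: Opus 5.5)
Set $M=L(\Gamma)\bar\otimes\Linf$, let $\mathbb{E}$ be the expectation onto $\C1\bar\otimes\Linf$, and let $\cP\subseteq M$ be invariant under $\theta$. Put $B_0:=\{a\in\Linf: 1\otimes a\in\cP\}$, so $\cP\cap(\C1\bar\otimes\Linf)=\C1\bar\otimes B_0$. The strategy is to show that $\mathbb{E}(\cP)\subseteq\cP$, i.e.\ a commuting square, so that Theorem~\ref{thm:cs-intro} applies with $B=B_0$.

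\textbf{Step 1: the centre.} Since $\theta_s$ preserves $\cP$, it also preserves $Z(\cP)$. The plan is to show that $Z(\cP)\subseteq\C1\bar\otimes\Linf$. Take $z\in Z(\cP)$ and expand $z=\sum_t\lambda_t\otimes z_t$ with $z_t\in\Linf$. The diagonal action is a permutation of the coordinates by conjugation, twisted by $\sigma$. Pointwise weak mixing (relative to the trivial factor) should kill Fourier coefficients $z_t$ with $t\neq e$ whose $\Gamma$-orbit is finite; more precisely, for $t$ in a finite conjugacy class the centralizer $C(t)$ has finite index and acts on $z_t$ by $\sigma$, so $z_t$ spans a finite-dimensional invariant subspace, and weak mixing of $\sigma_g$ gives $z_t$ constant. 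Thus $z$ restricted to the FC-centre part lies in $L(\mathrm{FC}(\Gamma))\bar\otimes\C1$. Trivial amenable radical gives $\mathrm{FC}(\Gamma)=\{e\}$, since the FC-radical is amenable, and the part supported on infinite classes vanishes by the $\ell^2$ norm argument, because $\|z_{sts^{-1}}\|_2=\|z_t\|_2$. Hence $Z(\cP)=\C1\bar\otimes B_1$ for an invariant $B_1\subseteq B_0$.

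\textbf{Step 2: the commuting square.} For $x\in\cP$, I would show $\mathbb{E}(x)\in\cP$. One approach is to average: the elements $\theta_s(x)$ lie in $\cP$, and convex combinations of $\theta_{s}$ applied to the non-identity Fourier coefficients tend to $0$ weakly. This is because the $t$-part moves along the infinite conjugacy class of $t$, and an ICC-type Dixmier/weak-limit argument (for example, using a mixing-type element $s_n\to\infty$ in the centralizer-free directions, available since the amenable radical is trivial and hence $\Gamma$ is ICC) yields $\lim_n \theta_{s_n}(x)$-style limits. This is the main obstacle. Such averages converge to $\sigma$-averages of $\mathbb{E}(x)$, not to $\mathbb{E}(x)$ itself. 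My first attempt would instead use Theorem~\ref{thm:cs-intro} in reverse, running an induction over the Furstenberg–Zimmer-type tower. Take $B=\mathbb{E}(\cP)''$ and use relative weak mixing over $B$ when $B$ is proper, invoking the ECI machinery of Theorem~\ref{thm:gen} if unique roots hold. Alternatively, I would use the central-support argument: $\cP\cap(\C1\bar\otimes\Linf)'\ni$ elements, and compute relative commutants to show $\cP'\cap M$ is of tensor form.

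\textbf{Step 3: conclusion.} Once $\mathbb{E}(\cP)\subseteq\cP$ holds with $B=\mathbb{E}(\cP)=B_0$, Theorem~\ref{thm:cs-intro} gives $\cP\subseteq L(\Gamma)\bar\otimes B$ with $\C1\bar\otimes B\subseteq\cP$. If $B=\Linf$, then $\cP$ contains $\C1\bar\otimes\Linf$ and is invariant. Its slices $\{y\in L(\Gamma): y\otimes1\in\cP\}$ are then $\Gamma$-invariant, and by Fourier decomposition with the tensor structure, $\cP=\cP_0\bar\otimes\Linf$. In general, I would write $\cP=\cP_0\bar\otimes B$ where $\cP_0=\{y: y\otimes1\in\cP\}$, using the $B$-module structure and a slice-map argument, since $B$ is abelian and lies in $\cP$. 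Then $\cP_0$ is an invariant subalgebra of $L(\Gamma)$, so $\cP_0=L(N)$ by the ISR property. Finally, $Z(\cP)=Z(L(N))\bar\otimes B$, and $Z(L(N))=\C1$ because $N$ has trivial amenable radical (normal subgroups inherit trivial FC-centre), which makes $L(N)$ a factor.
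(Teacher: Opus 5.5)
\textbf{Step 1 does not work.} The relations $\sigma_s(z_t)=z_{sts^{-1}}$ and $\|z_{sts^{-1}}\|_2=\|z_t\|_2$ hold only for elements fixed by $\theta_s$. An element $z\in Z(\cP)$ is not $\theta_s$-fixed; only the algebra $Z(\cP)$ is globally invariant. Your argument never uses that $z$ is central, so run verbatim it would show that every element of every invariant subalgebra lies in $\C1\bar\otimes\Linf$. That already fails for $\lambda_t\otimes1\in L(\Gamma)\bar\otimes\C1$. Nor can ICC alone give the conclusion: if $\Gamma$ is ICC but has a non-trivial abelian normal subgroup $A$, then $L(A)\bar\otimes\C1$ is invariant and equal to its own centre. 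The missing idea is to use that $Z(\cP)$ is \emph{amenable} and invariant. The Furstenberg-boundary hypertrace argument (Proposition~\ref{prop:insideamenradtensor}, Corollary~\ref{cor:diagonal-tensor-center}) then gives $Z(\cP)\subseteq L(\operatorname{Rad}(\Gamma))\bar\otimes\Linf=\C1\bar\otimes\Linf$. This is where triviality of the amenable radical is genuinely used, not merely through ICC.

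\textbf{Step 2 is immediate once Step 1 holds.} You leave it open, and your suggested routes are unsuccessful (averaging, as you note), unavailable (Theorem~\ref{thm:gen} needs unique roots and concerns crossed products), or not carried out. The missing observation is this: since $\Linf$ is abelian, $\C1\bar\otimes\Linf$ is central in $M$. Bimodularity then gives $x\,\mathbb{E}_{\cP}(1\otimes f)=\mathbb{E}_{\cP}(x(1\otimes f))=\mathbb{E}_{\cP}((1\otimes f)x)=\mathbb{E}_{\cP}(1\otimes f)\,x$ for $x\in\cP$. Hence $\mathbb{E}_{\cP}(1\otimes f)\in Z(\cP)\subseteq\C1\bar\otimes\Linf$, which is the commuting square by Lemma~\ref{lem:commuting-square}, with $\cP\cap(\C1\bar\otimes\Linf)=Z(\cP)=\C1\bar\otimes B$.

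\textbf{Step 3 has a further gap.} The facts $\cP\subseteq L(\Gamma)\bar\otimes B$ and $\C1\bar\otimes B\subseteq\cP$ do not yield $\cP=\cP_0\bar\otimes B$ by a slice-map argument. For instance, take a p.m.p.\ action in which some $\text{Fix}(t)$ has measure strictly between $0$ and $1$. The algebra generated by $\C1\bar\otimes\Linf$ and the elements $\lambda_t\otimes1_{\text{Fix}(t)}$ is invariant and contains $\C1\bar\otimes\Linf$, but is not of that form. Pointwise weak mixing must enter here. Your orbit-norm argument from Step 1 is exactly the right tool once it is applied to a genuinely fixed element. By equivariance, $\mathbb{E}_{\cP}(\lambda_g\otimes1)$ is $\theta_g$-fixed. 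Its coefficients therefore vanish on infinite $g$-conjugation orbits, and on finite orbits they are $\sigma_g^m$-invariant, hence scalar. This gives:
\begin{enumerate}
\item[(a)] $\mathbb{E}_{L(\Gamma)}(\cP)\subseteq\cP$, by Lemma~\ref{lem:tensor-group-expectation};
\item[(b)] $\mathbb{E}_{L(\Gamma)}(\cP)=\cP\cap(L(\Gamma)\bar\otimes\C1)=L(N)\bar\otimes\C1$, by ISR;
\item[(c)] $\mathbb{E}_{\cP}(\lambda_t\otimes b)=\mathbb{E}_{\cP}(\lambda_t\otimes1)(1\otimes b)=0$ for $t\notin N$ and $b\in B$, which removes every Fourier coefficient outside $N$.
\end{enumerate}
Your closing computation of $Z(\cP)$ via factoriality of $L(N)$ is fine once $\cP=L(N)\bar\otimes B$ is established.
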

The groups $\F_n$, $2\leq n\leq\infty$, have trivial amenable
radical and ISR, and hence satisfy the group assumptions of
Theorem~\ref{thm:main-tensor}; see
\cite{amrutam2023invariant,chifan2023invariant}. More generally, the same holds for
torsion-free non-elementary hyperbolic groups.

Pointwise weak mixing itself does not suffice for these
classification results. Example~\ref{ex:relative-mixing-needed}
gives an invariant subalgebra of a Bernoulli crossed product whose
intersection with $\Linf$ is non-trivial but which is not a
canonically embedded crossed product.
Proposition~\ref{prop:direct-augmentation} gives the corresponding
example in the tensor product. In both cases, the required relative
weak mixing fails.

We also construct actions satisfying the hypotheses of our main results, using ideas from \cite[Section~17]{kechris2010global} and \cite{wrobel2026orbit}. Recall that
an action is \emph{prime} if its only factors are the trivial factor
and the whole space. % and \emph{totally weakly mixing} if its
%restriction to every infinite subgroup is weakly mixing. 
\begin{thm}
\label{thm:intro-prime}
For every $2\leq n\leq\infty$, there exists an essentially free, prime, pointwise
weakly mixing p.m.p.~action of $\F_n$. These actions may be
obtained by restricting a single action of $\F_2$ to an increasing
family of free subgroups.
\end{thm}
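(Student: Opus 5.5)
We construct a single action $\alpha$ of $\F_2=\langle a,b\rangle$ and then restrict it. The restrictions to $\F_n$, $3\le n\le\infty$, come from the standard embeddings $\F_n\hookrightarrow\F_2$; for example, take the free generators $b^kab^{-k}$, $k<n$, of a subgroup of infinite index. For $\F_n\le\F_2$, essential freeness and pointwise weak mixing pass to restrictions automatically, because they are conditions on individual nontrivial elements. Primeness does not pass to subgroups. So the real task is to build an action of $\F_2$ that stays prime when restricted to each subgroup $H_n=\langle b^kab^{-k}:k<n\rangle$. Since these subgroups increase with union $H_\infty$, it suffices to arrange primeness of $\alpha|_{H_n}$ for every $n\ge 2$, including $n=\infty$.

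For the construction I would follow the genericity approach of \cite[Section~17]{kechris2010global}. Work in the Polish space $A(\F_2,X,\nu)\cong\Aut(X,\nu)^2$, where a point is a pair $(\alpha(a),\alpha(b))$. Essentially free actions form a dense $G_\delta$. For each fixed $g\neq e$, the set of actions in which $\alpha(g)$ is weakly mixing is a $G_\delta$. It is also dense: the map $\alpha\mapsto\alpha(g)$ is continuous and open onto $\Aut(X,\nu)$ (after conjugating, or by using free actions with weakly mixing elements), weakly mixing transformations are generic in $\Aut(X,\nu)$, and for $g$ not a proper power the word map is open. For a proper power $g=h^m$ we have $\alpha(g)=\alpha(h)^m$, and $T^m$ is weakly mixing whenever $T$ is. Intersecting over countably many $g$ gives a comeager set of essentially free, pointwise weakly mixing actions.

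Primeness is the main obstacle, and here I would use the idea of \cite{wrobel2026orbit}. A standard route is to show that the generic action has trivial centralizer-type rigidity and that its factors are controlled, but a cleaner route is joining rigidity. For a fixed pair $(T,S)$ one knows, from King's and Ageev's genericity results for $\Aut(X,\nu)$, that a generic $T$ has the property that every factor of $T$ is determined by a compact subgroup of its centralizer, and that a generic pair has trivial common centralizer. My plan is as follows. Take $\alpha(a)=T$ generic, which for instance makes $T$ rigid with simple spectrum and the centralizer of $T$ equal to the closure of its powers. A factor of $\alpha|_{H_n}$ is in particular a factor of $T$. By Ageev/del Junco-type genericity, the factors of a generic $T$ are either trivial or the whole space; indeed a generic transformation is prime, since its factors correspond to compact subgroups of $C(T)=\overline{\{T^k\}}$, and generically this group has no nontrivial compact subgroups beyond the trivial one. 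If I can cite that "a generic $T\in\Aut(X,\nu)$ has no nontrivial factors," then I am done immediately. Every factor of $\alpha|_{H_n}$ is $T$-invariant, since $a\in H_n$ for all $n\ge 2$, and hence is trivial or everything. The only remaining check is that the set of $\alpha$ with $\alpha(a)$ prime is comeager in $A(\F_2,X,\nu)$. This holds because it is the preimage of a comeager set under the continuous open projection $\alpha\mapsto\alpha(a)$.

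The step I expect to be delicate is justifying genericity of prime transformations. Here I would first try del Junco's result that a generic transformation has minimal self-joinings in the weak sense of being prime with centralizer equal to its powers. If that is unavailable in this form, the fallback is to make $\alpha(a)$ itself an explicit mixing prime transformation with minimal self-joinings, such as Ornstein's rank-one mixing example. I would then choose $\alpha(b)$ generically subject to freeness and weak mixing of all other elements. To do this I would run the Baire argument in the closed set $\{T_0\}\times\Aut(X,\nu)$, using density results for words involving $b$ nontrivially. For words in the conjugacy class of powers of $a$, the element acts by a conjugate of $T_0^k$, which is already free and weakly mixing.
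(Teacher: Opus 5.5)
Your reduction coincides with the paper's. You restrict one action of $\F_2$ to $H_n=\langle b^kab^{-k}\mid k<n\rangle$, note that freeness and pointwise weak mixing pass to subgroups, and get primeness of every restriction from primeness of the single transformation $\alpha(a)$, since $a\in H_n$. The gap is in producing $\alpha$. Your main route rests on a false claim: a generic $T\in\Aut(X,\nu)$ is \emph{not} prime. By Ageev's theorem, a generic $T$ is conjugate to a $G$-extension for every finite abelian group $G$. It therefore has non-trivial proper factors, namely fixed algebras of finite subgroups of $C(T)$. Nor can a generic $T$ have minimal self-joinings or centralizer equal to its powers. It is rigid and aperiodic, so $\overline{\{T^k\}}\subseteq C(T)$ is a perfect, hence uncountable, closed subgroup. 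Your own description of the generic $T$ (rigid, with $C(T)=\overline{\{T^k\}}$) already contradicts the ``del Junco'' statement you intend to cite. So the comeager-projection argument yields nothing.

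Your fallback is exactly the paper's strategy: freeze $\alpha(a)=T_0$, a mixing rank-one transformation (prime by \cite{ryzhikov1993mixing}), and choose $\alpha(b)=S$ generically. But the phrase ``using density results for words involving $b$ nontrivially'' hides the entire proof. With $T_0$ frozen, your earlier tools are unavailable. The only conjugations preserving $\{T_0\}\times\Aut(X,\nu)$ come from $C(T_0)=\{T_0^k\}$. The map $S\mapsto w^{T_0,S}$ is far from open: for $w=bab^{-1}a$ its image is a translate of the conjugacy class of $T_0$, a meager set. So pulling back the comeager sets of aperiodic or weakly mixing transformations proves nothing.

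The paper proves genericity of freeness in $S$ by induction on word length with explicit surgery (Lemmas~\ref{lem:disjointization}, \ref{lem:local-modification} and Propositions~\ref{prop:remove-fixed-points}, \ref{prop:generic-second-generator}). $S$ is modified by involutions supported on fixed-point sets, transported along pairwise disjoint $v$-paths. The involutions are chosen to preserve a finite Boolean algebra, so that $S$ stays in a prescribed weak neighbourhood.

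For weak mixing, the paper does not work in $\Aut(X,\nu)$ at all. Having fixed one such $S$, it replaces $S$ by $S\phi$ with $\phi$ generic in the full group $[T_0]$. This keeps the action free (Lemma~2.2 of \cite{wrobel2026orbit}). It then uses Lemmas~3.1--3.2 and Proposition~2.1 of \cite{wrobel2026orbit}, which rely on $T_0$ being mixing, to show that weak mixing of each cyclically reduced word containing $b^{\pm1}$ is comeager in $[T_0]$. Without these arguments, or a substitute proving density of $\{S\mid w^{T_0,S}\text{ is free and weakly mixing}\}$ for every such $w$, the proposal does not prove the theorem.
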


For a prime action, the only proper factor is trivial.
Theorem~\ref{thm:main}, applied to $Q=\Linf$ together with the remark following it, therefore gives the following corollary.

\begin{cor}\label{cor:intro-free}
Let $\sigma$ be an essentially free, prime,
pointwise weakly mixing p.m.p.~action of $\F_n$ on $(X,\nu)$, where
$2\leq n\leq\infty$. If $\cP\subseteq\Linf\rtimes\F_n$ is invariant
and $\cP\cap\Linf\neq\C1$, then $\cP=\Linf\rtimes N$ for some
$N\trianglelefteq\F_n$. If $\mathbb{E}(\cP)=\C1$, then
$\cP=L(N)$ for some $N\trianglelefteq\F_n$.
\end{cor}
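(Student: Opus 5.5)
The plan is to deduce Corollary~\ref{cor:intro-free} directly from Theorem~\ref{thm:main} with $\Gamma=\F_n$ and $Q=\Linf$, $\tau_Q=\nu$. The first task is to check hypotheses (a) and (b).

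\emph{Hypothesis (a).} Since the action is prime, the $\Gamma$-factors of $\Linf$ are only $\C1$ and $\Linf$. So the only proper $\Gamma$-factor is the trivial one. Pointwise weak mixing relative to $\C1$ is, by the convention in Definition~\ref{def: relative weak mixing}, ordinary pointwise weak mixing, and this is assumed.

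\emph{Hypothesis (b).} The only non-trivial $\Gamma$-factor is $B=\Linf$ itself. By the commutative characterization stated after Theorem~\ref{thm:main} (Subsection~\ref{subsec:central-free}), the inclusion $\Linf\subseteq\Linf$ is centrally $\Gamma$-free exactly when the action on $(X,\nu)$ is essentially free. That is assumed. If one prefers not to rely on the remark for the degenerate inclusion $B=Q$, one can check Definition~\ref{def:central-free} directly: essential freeness gives that for $g\neq e$ the only $a\in\Linf$ with $a\,b=\sigma_g(b)\,a$ for all $b$ is $a=0$. This is the step where I would look most carefully at the precise definition, but I expect it to be routine.

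\emph{First conclusion.} Suppose $\cP\cap\Linf\neq\C1$. By Theorem~\ref{thm:main}(i), $\cP=(\cP\cap\Linf)\rtimes N$ for some $N\trianglelefteq\F_n$. The algebra $\cP\cap\Linf$ is $\Gamma$-invariant, since $u_g\cP u_g^*=\cP$ and $u_g\Linf u_g^*=\Linf$. Hence it is a $\Gamma$-factor, and it is non-trivial. Primeness then forces $\cP\cap\Linf=\Linf$, which gives $\cP=\Linf\rtimes N$.

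\emph{Second conclusion.} Suppose $\mathbb{E}(\cP)=\C1$. Theorem~\ref{thm:main}(ii) applies once we know $\F_n$ has the ISR property. This holds for all $2\leq n\leq\infty$ by \cite{amrutam2023invariant,chifan2023invariant}, as recalled after Theorem~\ref{thm:main-tensor}. Hence $\cP=L(N)$ for some $N\trianglelefteq\F_n$. Alternatively, this case follows from Theorem~\ref{thm:scalar-expectation}, which needs only pointwise weak mixing.

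There is no real obstacle beyond correctly matching the hypotheses.
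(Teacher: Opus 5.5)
Your argument is correct and is the paper's own: apply Theorem~\ref{thm:main} with $Q=\Linf$. Primeness reduces (a) to pointwise weak mixing, and it reduces (b) to essential freeness of the action itself via the commutative characterisation of central freeness; primeness then gives $\cP\cap\Linf=\Linf$, and the ISR property of $\F_n$ gives the second assertion. Drop your optional ``direct check'' of (b): the condition that no nonzero $a\in\Linf$ satisfies $ab=\sigma_g(b)a$ is a consequence of Definition~\ref{def:central-free}, not the definition itself, which requires projections $p_j$ summing to $1$ with $p_j\sigma_s^\omega(p_j)=0$ (here a finite partition $X=\bigsqcup_i X_i$ with $X_i\cap sX_i=\varnothing$, as in \cite[Lemma~3.1]{Suz}), so rely on the remark as the paper does.
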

\subsection{Organization of the paper}
Section~\ref{sec:prelim} collects the notation and auxiliary results,
including relative weak mixing, central freeness and the ECI property.
In Section~\ref{sec:commuting-square}, we prove
Theorem~\ref{thm:cs-engine} for tracial von Neumann algebras and deduce
the crossed-product and tensor-product containment results.
Section~\ref{sec:crossed} proves the crossed-product classification,
Theorem~\ref{thm:main}, by means of Theorem~\ref{thm:tracial-crossed-classification},
and the ECI theorem, Theorem~\ref{thm:gen}.
In Section~\ref{sec:tensor}, we first classify invariant subalgebras
under the centre-position assumption. The boundary argument in
Proposition~\ref{prop:insideamenradtensor} then gives
Theorem~\ref{thm:main-tensor}. Proposition~\ref{prop:direct-augmentation}
shows that pointwise mixing does not suffice for this classification.
Finally, Section~\ref{sec:prime} constructs prime and non-prime actions
satisfying the hypotheses of these results.

\section*{Acknowledgments} 
The first-named author thanks Prof.\ Stefaan Vaes for taking the time to discuss this project with him during his visit to the Department of Mathematics at KU Leuven in the first week of March 2025. We also thank Prof.\ Stefaan Vaes for pointing out an error in a previous proof of Theorem \ref{thm:gen} from June 2025.

Part of this work was carried out in June 2025 while we were attending the conference \emph{Ergodic Group Actions and Unitary Representations II} in B\k{e}dlewo. An earlier version of Theorem \ref{thm:cs-engine}, addressing crossed product algebras arising from p.m.p.\ actions of free groups, was obtained in February 2026 while we were attempting to generalize the character trick of \cite[Lemma 2.7]{dudko2024character}, which was used in the proof for algebraic actions.

TA is supported by National Science Centre, Poland Sonata, grant number
2025/59/D/ST1/03117. YJ is supported by National Natural Science Foundation of China, grant number 12471118. AS was partially supported by the National Science Center Grant OPUS-29 UMO-2025/57/B/ST1/00057.
\section*{Use of AI}
The mathematical ideas, statements, and proofs are the work of the human authors. ChatGPT 6.1 and Claude 5.5 Opus were used solely for typesetting and for correcting language and grammatical errors.
\section{Preliminaries}\label{sec:prelim}
Let $(Q,\tau_Q)$ be a von Neumann algebra with a faithful normal tracial
state, and let $\sigma$ be a trace-preserving action of a countable
discrete group $\Gamma$ on $Q$. The underlying tracial von Neumann algebras are assumed to have separable predual. All von Neumann subalgebras are assumed to be unital.
We write $u_g$, $g\in\Gamma$, for the canonical unitaries in the von Neumann algebraic crossed product
$Q\rtimes_\sigma\Gamma$ and $\mathbb{E}$ for the canonical conditional
expectation onto $Q$. When the action is clear, we omit $\sigma$ from the crossed-product notation. For $x\in Q\rtimes_\sigma\Gamma$, write
$x=\sum_{g\in\Gamma}f_gu_g$, where $f_g=\mathbb{E}(xu_g^*)\in Q$;
the sum converges in $L^2$-norm with respect to the canonical trace
$\tau=\tau_Q\circ\mathbb{E}$. We use the notation
$\langle a,b\rangle=\tau(b^*a)$ and $\|a\|_2=\tau(a^*a)^{1/2}$
whenever $a,b$ are elements in a tracial von Neumann algebra with a trace $\tau$. A von Neumann subalgebra
$\cP\subseteq Q\rtimes_\sigma\Gamma$ is \emph{$\Gamma$-invariant} if
$u_g\cP u_g^*=\cP$ for every $g\in\Gamma$.
For $s\in\Gamma$ and $a,b\in Q\rtimes_\sigma\Gamma$, we use
\begin{equation}\label{eq:basic}
\begin{aligned}
 \|u_sau_s^{-1}\|_2&=\|a\|_2,\\
 \|\mathbb{E}(ab)\|_1&\leq\|a\|_2\|b\|_2.
\end{aligned}
\end{equation}
The first identity follows from invariance of the trace, and the second
from the Cauchy--Schwarz inequality and the $L^1$-contractivity of the expectation.
For finite $F\Subset\Gamma$, let $x_F=\sum_{g\in F}f_gu_g$; then
$\|x_F\|_2\leq\|x\|_2$.
For a p.m.p.~action on $(X,\nu)$, we take $Q=\Linf$ and
$\tau_Q=\nu$. In particular, $\mathbb{E}$ then denotes the canonical
expectation from $\Linf\rtimes\Gamma$ onto $\Linf$.

We also consider the tensor product $L(\Gamma)\bar\otimes\Linf$ with the
\emph{diagonal action} given by
$\theta_s(\lambda_t\otimes f)=\lambda_{sts^{-1}}\otimes\sigma_s(f)$
for $s,t\in\Gamma$ and $f\in\Linf$, where $\lambda_t$ denotes the canonical group unitary in $L(\Gamma)$.
Thus a von Neumann subalgebra
$\cP\subseteq L(\Gamma)\bar\otimes \Linf$
is called $\Gamma$-invariant if
$\theta_s(\cP)=\cP$ for each $s\in\Gamma$. We will occasionally write $u_g$ for $\lambda_g \otimes 1_{\Linf} \in L(\Gamma) \bar\otimes \Linf$ ($g \in \Gamma$). The canonical trace $\tau$ on $L(\Gamma)\bar\otimes \Linf$ is given simply by $\tau_\Gamma \otimes \nu$, where $\tau_\Gamma$ is the canonical trace of $L(\Gamma)$. By analogy with the case above, here $\mathbb{E}$ denotes the expectation onto
$\mathbb{C}1\bar{\otimes}\Linf$.

In both the crossed-product and tensor-product settings, conditional expectations are always $\tau$-preserving. For a von Neumann subalgebra $\cP$ of a tracial von Neumann algebra, $\mathbb{E}_{\cP}$ denotes the trace-preserving conditional expectation onto $\cP$. In particular, in either setting, $\mathbb{E}_{L(\Gamma)}$ denotes the trace-preserving conditional expectation
onto the canonical group von Neumann algebra, identified with $L(\Gamma)\bar{\otimes}\mathbb{C}1$ in the tensor product.
It applies $\tau_Q$ to each Fourier coefficient in the crossed product, and $\nu$ to each coefficient in the tensor product.

Although we mainly consider tensor products for actions on commutative von Neumann algebras, the same conventions apply to a trace-preserving action $\sigma$ on an
arbitrary tracial von Neumann algebra $(Q,\tau_Q)$: we then consider
$L(\Gamma)\bar\otimes Q$ with the diagonal action
$\theta_s(\lambda_t\otimes a)=\lambda_{sts^{-1}}\otimes\sigma_s(a)$
($s,t\in\Gamma$, $a\in Q$), the trace $\tau_\Gamma\otimes\tau_Q$, the
expectation $\mathbb{E}$ onto $\mathbb{C}1\bar\otimes Q$, and the $L^2$ Fourier
expansion $x=\sum_{t\in\Gamma}\lambda_t\otimes f_t$, where
$1\otimes f_t=\mathbb{E}((\lambda_t^*\otimes1)x)$ with $f_t\in Q$. This
generality is used in Corollary~\ref{cor:cs-tensor} and
Theorem~\ref{thm:scalar-expectation}.

\subsection{Central freeness}\label{subsec:central-free}
Fix a free ultrafilter $\omega$ on $\N$.
For a tracial von Neumann algebra $(M,\tau)$, the \emph{tracial ultrapower}
$M^\omega$ is the quotient of the $C^*$-algebra $\ell^\infty(\N,M)$ of bounded
sequences in $M$ by the ideal
\[
\mathcal I_\omega=\Bigl\{(x_n)_n\in\ell^\infty(\N,M)\Bigm|\lim_{n\to\omega}\|x_n\|_2=0\Bigr\}.
\]
It is a tracial von Neumann algebra with the faithful normal trace
$\tau^\omega((x_n)_n)=\lim_{n\to\omega}\tau(x_n)$, and we identify $M$ with
its diagonal copy in $M^\omega$, consisting of the classes of constant
sequences. We shall use the following standard facts, for which we refer for example to \cite[Appendix A]{SinclairSmith}.
\begin{enumerate}[label=\textup{(\arabic*)}]
\item If $D\subseteq M$ is a von Neumann subalgebra, the classes of sequences
in $D$ form a von Neumann subalgebra $D^\omega\subseteq M^\omega$. In
particular, given a trace-preserving  action $\sigma$ of $\Gamma$ on a tracial von Neumann algebra $Q$ we have $B^\omega\subseteq Q^\omega\subseteq(Q\rtimes_\sigma\Gamma)^\omega$
for every von Neumann subalgebra $B\subseteq Q$. In this situation $B^\omega\cap Q'$ consists of
the elements of $B^\omega$ that commute with the diagonal copy of $Q$.
\item The trace-preserving conditional expectation $\mathbb{E}_M$ from
$M^\omega$ onto $M$ is given by
$\mathbb{E}_M((x_n)_n)=\mathrm{weak}^*\text{-}\lim_{n\to\omega}x_n$. Indeed,
for $x=(x_n)_n\in M^\omega$ and $y\in M$ we have
\[
\tau(\mathbb{E}_M(x)y)=\lim_{n\to\omega}\tau(x_ny)=\tau^\omega(xy).
\]
Since $D$ is weak$^*$-closed, it follows that $\mathbb{E}_M(D^\omega)=D$ for
every von Neumann subalgebra $D\subseteq M$.
\item Every trace-preserving automorphism $\alpha$ of $M$ induces an
automorphism $\alpha^\omega$ of $M^\omega$, given by
$\alpha^\omega((x_n)_n)=(\alpha(x_n))_n$. In particular,  a trace-preserving  action $\sigma$ of $\Gamma$ on a tracial von Neumann algebra $Q$ induces an
action $s\mapsto\sigma_s^\omega$ of $\Gamma$ on $Q^\omega$, and
$u_szu_s^*=\sigma_s^\omega(z)$ in $(Q\rtimes_\sigma\Gamma)^\omega$ for all
$z\in Q^\omega$ and $s\in\Gamma$.
\end{enumerate}
%None of this requires separability of the predual.

The following definition first appeared in \cite[Definition~4.3]{Suz} and plays the role of essential freeness in the noncommutative context (see the discussion below the definition).

\begin{definition}\label{def:central-free}
Consider a trace-preserving  action $\sigma$ of a countable group $\Gamma$ on a tracial von Neumann algebra $Q$.
A $\Gamma$-invariant inclusion $B\subseteq Q$ is \emph{centrally
$\Gamma$-free} if, for each $s\in \Gamma, s\ne e$, there are countably many
projections
$p_j\in B^\omega\cap Q'$ ($j \in \N$) such that $\sum_{j=1}^\infty p_j=1$ \textup{(}in the strong
operator topology\textup{)} and
$p_j\sigma_s^\omega(p_j)=0$ for every $j\in \N$.
%See, for example,
%\cite[Definition~4.3 and Remark~4.4\textup{(3)}]{Suz}.
\end{definition}

\emph{The commutative case.} When $Q=\Linf$, the algebra $Q^\omega$ is
abelian, so $B^\omega\cap Q'=B^\omega$. If $B=L^\infty(Y,\eta)$ corresponds
to a factor $(Y,\eta)$ of the action, then the inclusion $B\subseteq\Linf$ is
centrally $\Gamma$-free if and only if the factor action
$\Gamma\curvearrowright(Y,\eta)$ is essentially free.  Indeed, if the factor
action is essentially free and $s\ne e$, then \cite[Lemma~3.1]{Suz} gives a (finite)
partition $Y=\sqcup_{j=1}^nY_j$ into measurable sets with
$Y_i\cap sY_i=\varnothing$, and the projections $p_i=1_{Y_i}\in B\subseteq B^\omega$
are as required in Definition~\ref{def:central-free}. Conversely, suppose that
the fixed-point set $E\subseteq Y$ of some $s\ne e$ has positive measure.
Then $1_E\sigma_s(b)=1_Eb$ for every $b\in B$, hence $1_E\sigma_s^\omega(p)=1_Ep$
for every $p\in B^\omega$. For projections $p_j$ as in
Definition~\ref{def:central-free} we would get
$1_Ep_j=p_j1_E\sigma_s^\omega(p_j)=1_Ep_j\sigma_s^\omega(p_j)=0$ for every $j$,
and summing over $j\in \N$ would give $1_E=0$, a contradiction.

\begin{lem}\label{lem:central-free-expectation}
Suppose that $\sigma$ is a trace-preserving  action  of a countable group $\Gamma$ on a tracial von Neumann algebra $Q$. Let $B$ be a $\Gamma$-invariant subalgebra of $Q$ such that  $B\subseteq Q$ is centrally $\Gamma$-free. Then
$\mathbb{E}(x)\in W^*(B,x)$ for every $x\in Q\rtimes_\sigma\Gamma$,
and $B'\cap(B\rtimes_\sigma\Gamma)=Z(B)$. Here, $W^*(B, x)$ denotes the von Neumann subalgebra of $Q\rtimes_{\sigma}\Gamma$ generated by $B$ and $x$.
\end{lem}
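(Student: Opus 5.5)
We show that the central projections kill every off-diagonal Fourier coefficient of $x$, so that $\mathbb{E}(x)$ is a limit of expressions in $x$ and $B$. Fix $x=\sum_g f_gu_g\in Q\rtimes_\sigma\Gamma$ and write $M=W^*(B,x)$. The idea is to compress $x$ by the projections from Definition~\ref{def:central-free}, working inside the ultrapower $(Q\rtimes_\sigma\Gamma)^\omega$.

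Fix $s\neq e$ and take projections $p_j\in B^\omega\cap Q'$ with $\sum_j p_j=1$ and $p_j\sigma^\omega_s(p_j)=0$. For $a\in Q$ we have $p_j a u_s p_j = a\,p_j u_s p_j$, since $p_j$ commutes with $Q$. By fact (3), $u_sp_j=\sigma_s^\omega(p_j)u_s$, so
\[
p_j a u_s p_j = a\,p_j\sigma^\omega_s(p_j)u_s = 0 .
\]
For $g=e$ the same computation gives $p_jf_ep_j=f_ep_j$, using that $p_j$ commutes with $Q$.

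The argument has to handle all $g\neq e$ at once, whereas a partition adapted to one $s$ does nothing for the other group elements. I would therefore work with a finite set $F\Subset\Gamma\setminus\{e\}$ and form the common refinement of the partitions attached to the elements of $F$. Products $p_{j_1}^{(s_1)}\cdots p_{j_k}^{(s_k)}$ are projections because all these projections lie in the commutative... but they need not commute with one another. This lack of commutativity is the main obstacle.

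To get around it, I would replace the countable partitions by finite ones, truncating so that the remainder has small trace. I would then lift each projection to a representing sequence of projections $p_j^{(n)}\in B$, and use a diagonal/reindexing argument to arrange that the lifts almost commute with each other and with $Q$ in $\|\cdot\|_2$. An alternative is to follow \cite[proof of Theorem~4.6]{Suz} directly, which handles exactly this point by an inductive Rokhlin-type refinement.

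Once a finite family $(q_i)\subseteq B^\omega\cap Q'$ with $\sum_i q_i=1$ and $q_iu_gq_i\approx0$ for all $g\in F$ is available, the map $\Phi(y)=\sum_i q_iyq_i$ satisfies $\|\Phi(x_F+f_e)-f_e\|_2$ small. In addition, $\|\Phi(x-x_F)\|_2\le\|x-x_F\|_2$, since $\Phi$ is contractive in $L^2$ as a pinching.

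At the finite level $n$, the element $\sum_i q_i^{(n)}xq_i^{(n)}$ lies in $M$, because each $q_i^{(n)}\in B\subseteq M$ and $x\in M$. Hence $\Phi(x)\in M^\omega$. Letting $F$ increase, we obtain $f_e$ as an $L^2$-limit of elements of $M^\omega$. Applying $\mathbb{E}_{Q\rtimes\Gamma}$ and fact (2), which gives $\mathbb{E}_{Q\rtimes\Gamma}(M^\omega)=M$, and using that $f_e$ is diagonal, we conclude $\mathbb{E}(x)=f_e\in M$.

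For the relative commutant claim, take $x\in B'\cap(B\rtimes\Gamma)$. The argument above only needs $x$ to commute with the projections. For $x\in B'$ we have $x\in (B^\omega)'$ modulo an ultrapower check: since $x$ commutes with every constant sequence from $B$, it commutes with every element of $B^\omega$. Hence $q_ixq_i=xq_i$ and $\Phi(x)=x$. This forces $x=f_e$, so $x\in B\cap B'=Z(B)$. The reverse inclusion $Z(B)\subseteq B'\cap(B\rtimes\Gamma)$ is trivial.
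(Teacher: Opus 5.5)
Your overall plan is sound, but the proof is incomplete at exactly the step you flag as the main obstacle. The ingredients you use are the paper's: pinch $x$ by projections of $B^\omega\cap Q'$ inside $(Q\rtimes_\sigma\Gamma)^\omega$, note that the pinched element is represented by a sequence in $W^*(B,x)$, and return via $\mathbb{E}_{Q\rtimes_\sigma\Gamma}$ and fact~(2). Your computations $p_jau_sp_j=0$ and $\Phi(f_e)=f_e$, and the closing $\|\cdot\|_2$-closure argument, are also correct.

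The gap is the existence of one finite partition $(q_i)\subseteq B^\omega\cap Q'$ with $q_iu_gq_i\approx0$ for all $g\in F$ simultaneously. You only assert it (``a diagonal/reindexing argument'', or ``follow Suzuki''). It can be filled as follows. Truncate each countable partition to a finite one whose remainder has small trace. Then build the partitions for $s_1,\dots,s_k$ inductively by reindexing the representing sequences of the $s_i$-partition: for each $n$, pick an index $k(n)$ at which the lifts
\begin{itemize}
\item nearly commute with the $n$-th lifts of the earlier projections,
\item nearly commute with the first $n$ terms of a $\|\cdot\|_2$-dense sequence in $Q$,
\item and nearly satisfy the partition relations and $p\,\sigma_{s_i}(p)=0$.
\end{itemize}
This uses separability of $Q_*$ and gives mutually commuting partitions whose products form the refinement. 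A small slip: the tail estimate should read $\|\Phi(x-x_F-f_e)\|_2\le\|x-x_F-f_e\|_2$.

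The paper shows that no simultaneous partition is needed; the argument it takes from Suzuki is a composition, not a refinement. For a single $s\neq e$ it sets $\Psi_s=\mathbb{E}_M\circ\Phi_s$, where $\Phi_s(y)=\sum_jp_jyp_j$ and $M=Q\rtimes_\sigma\Gamma$. Because $p_j\in Q'$ and $u_tp_j=\sigma_t^\omega(p_j)u_t$, the map acts coefficientwise:
\[
\Psi_s\Bigl(\sum_ty_tu_t\Bigr)=\sum_t\mathbb{E}_M(c_t)u_t,\qquad c_t=\sum_jp_jy_t\sigma_t^\omega(p_j).
\]
Hence $c_e=y_e$, $c_s=0$ and $\|\mathbb{E}_M(c_t)\|_2\le\|y_t\|_2$. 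Moreover $\Psi_s$ maps $W^*(B,x)$ into itself. Since $\Psi_s$ lands back in $M$, one composes $\Psi_{s_k}\circ\cdots\circ\Psi_{s_1}$, each with its own partition. Coefficients killed earlier stay zero by contractivity, the $e$-coefficient is preserved, and the partitions never need to commute.

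This avoids truncation and reindexing entirely. Your route, once completed, proves the stronger fact that one partition in $B^\omega\cap Q'$ handles finitely many group elements at once, which is not needed here.

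The same applies to your relative-commutant argument. It is correct: $x\in B'$ does commute with $B^\omega$, so $\Phi(x)=x$. But one $s$ at a time suffices, as in the paper's direct computation $a_sp_j=a_s\sigma_s^\omega(p_j)p_j=0$, so no refinement is needed there either.
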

\begin{proof}
For the first assertion, we follow the argument in
\cite[Theorem~4.6]{Suz}.

We shall write $M:=Q\rtimes_\sigma\Gamma$. Let $x\in M$, and consider $D=W^*(B,x)$.  We can write
$x=\sum_{t\in\Gamma}x_tu_t$, where
$x_t=\mathbb{E}(xu_t^*)\in Q$. Fix $s \in \Gamma, s\ne e$, and choose projections
$p_j\in B^\omega\cap Q'\subseteq D^\omega$ as in Definition~\ref{def:central-free}.
These projections are pairwise orthogonal. 
For $y\in M^\omega$, define $\Phi(y)=\sum_{j=1}^\infty p_jyp_j$.
The partial sums have norm at most $\|y\|$, and
\[
\sum_j\|p_jyp_j\|_2^2\le\sum_j\|p_jy\|_2^2=\|y\|_2^2.
\]
Since the summands are orthogonal in $L^2(M^\omega)$, the partial sums
are $\|\cdot\|_2$-Cauchy. Therefore $\Phi(y)\in M^\omega$, with
$\|\Phi(y)\|\le\|y\|$ and $\|\Phi(y)\|_2\le\|y\|_2$.
Let $\Psi=\mathbb{E}_M\circ\Phi$, with $\mathbb{E}_M$ as in fact~(2) above.
Then $\Psi$ is contractive in both the operator norm and the $L^2$-norm. For $y\in D$, the partial
sums defining $\Phi(y)$ belong to $D^\omega$ and have norm at most
$\|y\|$, so their $\|\cdot\|_2$-limit also belongs to $D^\omega$.
Since $\mathbb{E}_M(D^\omega)=D$, we get $\Psi(D)\subseteq D$.

We shall now compute the Fourier coefficients of $\Psi(y)$ for $y=\sum_{t\in \Gamma}y_tu_t\in M$.
For $t\in\Gamma$, fact~(3) gives $u_tp_j=\sigma_t^\omega(p_j)u_t$, so
$\Phi(y_tu_t)=c_tu_t$, where
$c_t=\sum_{j=1}^\infty p_jy_t\sigma_t^\omega(p_j)$.
The partial sums defining $c_t$ belong to $Q^\omega$, have norm at most
$\|y_t\|$, and converge in $\|\cdot\|_2$. Thus $c_t\in Q^\omega$.
Bimodularity gives $\Psi(y_tu_t)=\mathbb{E}_M(c_t)u_t$, where
$\mathbb{E}_M(c_t)\in Q$. By linearity and $\|\cdot\|_2$-contractivity,
$\Psi(y)=\sum_{t\in \Gamma}\mathbb{E}_M(c_t)u_t$ in $L^2(M)$.
Since the projections $p_j$ are pairwise orthogonal,
\[
\|\mathbb{E}_M(c_t)\|_2^2\le\|c_t\|_2^2
=\sum_{j=1}^\infty\|p_jy_t\sigma_t^\omega(p_j)\|_2^2
\le\sum_{j=1}^\infty\|p_jy_t\|_2^2=\|y_t\|_2^2.
\]
Since $p_j$ commutes with $y_t\in Q$, we have $c_e=y_e$ and
$c_s=y_s\sum_{j=1}^\infty p_j\sigma_s^\omega(p_j)=0$.
Thus $\Psi$ maps $D$ into $D$, fixes the Fourier coefficient at $e$,
annihilates the coefficient at $s$, and does not increase the
$\|\cdot\|_2$-norm of any Fourier coefficient. 

Now let $F\Subset\Gamma\setminus\{e\}$, write $F=\{s_1,\ldots,s_k\}$, and let
$\Psi_i$ be the map constructed above for $s=s_i$. Set $x^{(0)}=x$ and
$x^{(i)}=\Psi_i(x^{(i-1)})$ for $1\le i\le k$. Then $x^{(k)}\in D$ and
$\|x^{(k)}\|\le\|x\|$. Its Fourier coefficient at $e$ is $x_e$, its
coefficients at the elements of $F$ vanish, and its coefficient at any
$t$ has $\|\cdot\|_2$-norm at most $\|x_t\|_2$. Hence
\[
\|x^{(k)}-\mathbb{E}(x)\|_2^2\le\sum_{t\notin F\cup\{e\}}\|x_t\|_2^2.
\]
Taking an increasing sequence of finite sets $F$ exhausting
$\Gamma\setminus\{e\}$, we obtain a sequence in $D$, bounded in norm by
$\|x\|$, which converges to $\mathbb{E}(x)$ in $\|\cdot\|_2$.
Since the ball of radius $\|x\|$ in $D$ is $\|\cdot\|_2$-closed in $M$,
we conclude that $\mathbb{E}(x)\in D$.

For the second assertion, let $x=\sum_{s\in \Gamma}a_su_s\in B'\cap(B\rtimes_\sigma\Gamma)$,
where $a_s\in B$. Comparing the Fourier coefficients of $bx$ and $xb$
gives $ba_s=a_s\sigma_s(b)$ for every $b\in B$ and $s\in\Gamma$.
Fix $s\ne e$ and choose the projections $p_j$ as in
Definition~\ref{def:central-free}. The same identity in the ultrapower,
applied to representing sequences in $B$,
gives $p_ja_s=a_s\sigma_s^\omega(p_j)$  for every $j \in \N$. Since $p_j$ commutes with
$a_s$, multiplication on the right by $p_j$ gives $a_sp_j=a_s\sigma_s^\omega(p_j)p_j
=a_s(p_j\sigma_s^\omega(p_j))^*=0$.
Summing over $j$ yields $a_s=0$. Thus $x=a_e\in Z(B)$.
The reverse inclusion is immediate.
\end{proof}

\subsection{A covariant representation for compact dynamical systems}\label{subsec:boundary-rep}
Let $\Gamma$ be a countable discrete group and let $(Q,\tau_Q)$ be a von Neumann
algebra with a faithful normal tracial state. Suppose that $\sigma$ is a
trace-preserving action of $\Gamma$ on $Q$. Let $M=L(\Gamma)\bar\otimes Q$ carry the diagonal action
$\theta_s(\lambda_t\otimes a)=\lambda_{sts^{-1}}\otimes\sigma_s(a)$, $s, t \in \Gamma$, and let
$\widetilde{M}=M\rtimes_\theta\Gamma$, with $v_s$ the canonical unitaries
implementing $\theta$ and $u_t=\lambda_t\otimes1\in M$. We use the standard representation of
$\widetilde{M}$ on
$L^2(\widetilde{M})\cong \ell^2(\Gamma)_r\otimes L^2(Q)\otimes\ell^2(\Gamma)_h$, so that, %given for $x\in M$
%by $(\pi(x)\xi)(h)=\theta_{h^{-1}}(x)\xi(h)$ and by $(v_s\xi)(h)=\xi(s^{-1}h)$. Thus, 
for $a\in Q$ (identified with $1\otimes a\in M$), $s,t,r,h\in\Gamma$ and $\eta\in L^2(Q)$, the action on elementary tensors is given by
\begin{align*}
\pi(a)(\delta_r\otimes\eta\otimes\delta_h)
&=\delta_r\otimes\sigma_{h^{-1}}(a)\eta\otimes\delta_h,\\
\pi(u_t)(\delta_r\otimes\eta\otimes\delta_h)
&=\delta_{h^{-1}th r}\otimes\eta\otimes\delta_h,\\
v_s(\delta_r\otimes\eta\otimes\delta_h)
&=\delta_r\otimes\eta\otimes\delta_{sh}.
\end{align*}
We suppress $\pi$ from the notation. Let $Z$ be a compact Hausdorff
$\Gamma$-space with a point $x_0\in Z$ whose orbit is dense. For
$f\in C(Z)$, define a multiplication operator $m_f$ on $L^2(\widetilde{M})$ by
$m_f(\delta_r\otimes\eta\otimes\delta_h) = f(hrx_0)\delta_r\otimes\eta\otimes\delta_h$, with $\eta, r, h$ as above.
The density of $\Gamma x_0$ makes this representation of $C(Z)$ faithful.
The next lemma shows that it is covariant for both copies of $\Gamma$.

\begin{lem}\label{lem:boundary-covariance}
Write $(g\cdot f)(x)=f(g^{-1}x)$ for $g\in\Gamma$, $f\in C(Z)$ and
$x\in Z$. Then
$v_sm_fv_s^*=m_{s\cdot f}$ and $u_tm_fu_t^*=m_{t\cdot f}$ for $s,t\in\Gamma$,
equivalently $u_t^*m_f=m_{t^{-1}\cdot f}u_t^*$ with $(t^{-1}\cdot f)(x)=f(tx)$. Moreover,
$C(Z)$ commutes with $\mathbb{C}1\bar{\otimes}Q$ inside $\mathbb B(L^2(\widetilde{M}))$.
\end{lem}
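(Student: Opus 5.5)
The plan is to verify all identities on the elementary tensors $\delta_r\otimes\eta\otimes\delta_h$, which span a dense subspace of $L^2(\widetilde{M})$. All operators involved are bounded; $m_f$ is diagonal in this basis with $\|m_f\|\le\|f\|_\infty$. Hence it suffices to check that both sides of each identity agree on such vectors. The equivalent reformulation $u_t^*m_f=m_{t^{-1}\cdot f}u_t^*$ follows from $u_tm_fu_t^*=m_{t\cdot f}$ by replacing $t$ with $t^{-1}$ (note $u_{t^{-1}}=u_t^*$) and multiplying on the right by $u_t^*$.

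For the $v_s$-covariance, we use $v_s^*=v_{s^{-1}}$. Then
\[
v_s m_f v_s^*(\delta_r\otimes\eta\otimes\delta_h)=v_s\bigl(f(s^{-1}hrx_0)\,\delta_r\otimes\eta\otimes\delta_{s^{-1}h}\bigr)=f(s^{-1}hrx_0)\,\delta_r\otimes\eta\otimes\delta_h .
\]
Since $(s\cdot f)(hrx_0)=f(s^{-1}hrx_0)$, this equals $m_{s\cdot f}(\delta_r\otimes\eta\otimes\delta_h)$.

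For the $u_t$-covariance, the given formula shows that $u_t^*=u_{t^{-1}}$ sends $\delta_r\otimes\eta\otimes\delta_h$ to $\delta_{h^{-1}t^{-1}hr}\otimes\eta\otimes\delta_h$. Applying $m_f$ multiplies this vector by $f(h\cdot h^{-1}t^{-1}hr\,x_0)=f(t^{-1}hrx_0)$. Applying $u_t$ then returns it to $\delta_{h^{-1}th\,h^{-1}t^{-1}hr}\otimes\eta\otimes\delta_h=\delta_r\otimes\eta\otimes\delta_h$. So
\[
u_tm_fu_t^*(\delta_r\otimes\eta\otimes\delta_h)=f(t^{-1}hrx_0)\,\delta_r\otimes\eta\otimes\delta_h=m_{t\cdot f}(\delta_r\otimes\eta\otimes\delta_h).
\]
The key point is that the conjugation $r\mapsto h^{-1}thr$ becomes left translation $hr\mapsto thr$ in the variable $hr$ that enters $m_f$. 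This is exactly why $m_f$ is defined through $hrx_0$.

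For the commutation with $\mathbb{C}1\bar\otimes Q$, take $a\in Q$. The operator $\pi(a)$ acts only on the middle tensor factor, by $\sigma_{h^{-1}}(a)$, and it preserves the indices $r$ and $h$. The operator $m_f$ acts by the scalar $f(hrx_0)$, which depends only on $r$ and $h$. Hence both $m_f\pi(a)$ and $\pi(a)m_f$ send $\delta_r\otimes\eta\otimes\delta_h$ to $f(hrx_0)\,\delta_r\otimes\sigma_{h^{-1}}(a)\eta\otimes\delta_h$.

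For fixed $r,h$ this is exact on all $\eta\in L^2(Q)$, so the identity holds on a total set and extends by boundedness. The only mildly delicate step is the index bookkeeping in the $u_t$ computation, where the correct inverse and ordering must be used. Everything else is direct.
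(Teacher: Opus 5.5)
Your proof is correct and follows the paper's argument: both verify the two covariance relations by direct computation on the elementary tensors $\delta_r\otimes\eta\otimes\delta_h$, and both obtain the commutation with $\mathbb{C}1\bar\otimes Q$ from the fact that both algebras act diagonally in the two $\ell^2(\Gamma)$-coordinates, with $m_f$ acting by scalars on each fibre. Your explicit derivation of the equivalent form $u_t^*m_f=m_{t^{-1}\cdot f}u_t^*$ and the extension by boundedness from a total set fill in details that the paper leaves implicit.
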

\begin{proof}
For the first relation we have
\begin{align*}
v_s m_f v_s^*(\delta_r\otimes\eta\otimes\delta_h)
&=v_s m_f(\delta_r\otimes\eta\otimes\delta_{s^{-1}h})\\
&=f(s^{-1}hrx_0)\delta_r\otimes\eta\otimes\delta_h\\
&=(s\cdot f)(hrx_0)\delta_r\otimes\eta\otimes\delta_h
=m_{s\cdot f}(\delta_r\otimes\eta\otimes\delta_h),
\end{align*}
and for the second,
\begin{align*}
u_t m_f u_t^*(\delta_r\otimes\eta\otimes\delta_h)
&=u_t m_f(\delta_{h^{-1}t^{-1}hr}\otimes\eta\otimes\delta_h)\\
&=u_t\bigl(f(t^{-1}hrx_0)\delta_{h^{-1}t^{-1}hr}\otimes\eta\otimes\delta_h\bigr)\\
&=f(t^{-1}hrx_0)\delta_r\otimes\eta\otimes\delta_h\\
&=(t\cdot f)(hrx_0)\delta_r\otimes\eta\otimes\delta_h
=m_{t\cdot f}(\delta_r\otimes\eta\otimes\delta_h).
\end{align*}
The final assertion holds because both $C(Z)$ and $\mathbb{C}1\bar{\otimes}Q$ act diagonally in the
two $\ell^2(\Gamma)$-coordinates, the former by scalars on each fibre.
\end{proof}

\subsection{Weak mixing}\label{subsec:wm}
For a trace-preserving automorphism $\alpha$ of $(Q,\tau_Q)$ and an
$\alpha$-invariant von Neumann subalgebra $B\subseteq Q$, the expectation
$\mathbb{E}_B$ commutes with $\alpha$. When $Q=\Linf$ and
$T\in\Aut(X,\nu)$, we write $T\xi=\xi\circ T^{-1}$ for
$\xi\in\Linf$, and apply the definitions below to this automorphism
of $\Linf$.
\begin{samepage}
\begin{definition}\label{def: relative weak mixing}
Let $\alpha$ be a trace-preserving automorphism of $(Q,\tau_Q)$ and let
$B\subseteq Q$ be an $\alpha$-invariant von Neumann subalgebra.
The automorphism $\alpha$ is \emph{weakly mixing relative to $B$} if
$\frac1m\sum_{n=0}^{m-1}\norm{\mathbb{E}_B(\zeta^*\alpha^n(\xi))}_2^2\to0$ as $m\to\infty$
for all $\xi,\zeta\in Q$ with
$\mathbb{E}_B(\xi)=\mathbb{E}_B(\zeta)=0$.
For $B=\C1$, this is \emph{weak mixing}.
A trace-preserving action $\sigma$ of $\Gamma$ on $(Q,\tau_Q)$ is
\emph{pointwise weakly mixing relative to $B$} if $B$ is
$\Gamma$-invariant and $\sigma_g$ is weakly mixing relative to $B$
for every $g\ne e$. For $B=\C1$, we say that $\sigma$ is
\emph{pointwise weakly mixing}.
\end{definition}
\end{samepage}

For a p.m.p.\ action $\sigma$ of $\Gamma$ on $(X,\nu)$, weak mixing
in the sense of Definition~\ref{def: relative weak mixing} is the classical
notion: $\sigma_g$ is weakly mixing if and only if the transformation of
$(X,\nu)$ given by $g$ is weakly mixing, that is, $g$ acts ergodically on
$(X\times X,\nu\times\nu)$ diagonally, or equivalently, its Koopman operator
has no eigenvectors in $L^2(X,\nu)\ominus\C1$; see e.g.\
\cite{BergelsonGorodnik}. Thus a p.m.p.\ action is pointwise weakly mixing
if and only if every $g\neq e$ acts by a weakly mixing transformation; for
actions of free groups this is total weak mixing in the sense of
\cite{kechris2010global} (see Section~\ref{sec:prime}).
Remark~\ref{remark: relative weak mixing for diagonal action wrt a coordinate factor}
below gives the corresponding description of weak mixing relative to a factor.

If $\mathbb{E}_B(\xi)=0$, the same convergence holds for arbitrary
$\zeta\in Q$. Indeed, writing $\zeta_0=\zeta-\mathbb{E}_B(\zeta)$,
bimodularity and equivariance give
\[
 \mathbb{E}_B(\zeta^*\alpha^n(\xi))
 =\mathbb{E}_B(\zeta_0^*\alpha^n(\xi))
   +\mathbb{E}_B(\zeta)^*\alpha^n(\mathbb{E}_B(\xi))
 =\mathbb{E}_B(\zeta_0^*\alpha^n(\xi)).
\]

\begin{remark}\label{remark: relative weak mixing for diagonal action wrt a coordinate factor}
    For an ergodic $T \in \Aut(X,\nu)$, relative weak mixing is equivalent to ergodicity of the diagonal automorphism $T\times T\in \Aut(X\times_YX,\nu\times_{\mu}\nu)$, where $X\times_YX=\{(x_1,x_2)\in X\times X\mid\pi(x_1)=\pi(x_2)\}$ denotes the fibred product space and $\pi$ denotes the factor map from $(X,\nu)$ to $(Y,\mu)$ (see e.g.\ \cite{Furman_imrn}). For a reference, see, e.g.,\ \cite[Proposition~7.30 in Chapter~7]{EMW_book} or \cite[Theorem~7.5]{Furstenberg_1977}. In particular, a diagonal automorphism of two weakly mixing automorphisms, say $T\times S\in\Aut(X\times Z, \nu\times \mu)$, is weakly mixing relative to $L^{\infty}(X, \nu)\bar{\otimes}\mathbb{C}1_{L^{\infty}(Z,\mu)}$. Indeed, this follows since the automorphism $ (T\times S)\times (T\times S)\in \Aut((X\times Z)\times_X(X\times Z), (\nu\times \mu)\times_{\nu}(\nu\times \mu))$ is conjugate to  $T\times S\times S\in\Aut(X\times Z\times Z, \nu\times \mu\times \mu)$, which is ergodic. For further discussion, see \cite{BergelsonGorodnik}.
\end{remark}

Whenever we say that ``$T$ is weakly mixing relative to the factor $Y$'', we mean that $T$
is weakly mixing relative to $B=L^\infty(Y)$ in the sense above; being weakly mixing relative to the trivial factor means simply weakly mixing.

The next lemma gives a sequence along which the relative correlations tend to zero
for all pairs of distinct times. %For $B=\C1$ it applies to weak mixing.

\begin{samepage}
\begin{lem}\label{lem:relative-pairwise-wm-selection}
Let $\alpha$ be a trace-preserving automorphism of $(Q,\tau_Q)$ which
is weakly mixing relative to an $\alpha$-invariant von Neumann subalgebra
$B\subseteq Q$, and let $\Xi\Subset Q$ be finite. Then there exists a strictly increasing sequence
$(m_k)_{k\ge1}\subseteq\N$ such that
$\|\mathbb{E}_B(\eta^*\alpha^{m_\ell-m_k}(\xi))\|_2\to0$ as
$\max\{k,\ell\}\to\infty$ with $k\neq\ell$, for all
$\xi,\eta\in\Xi$ satisfying $\mathbb{E}_B(\xi)=\mathbb{E}_B(\eta)=0$.
\end{lem}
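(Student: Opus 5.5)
Since $\Xi$ is finite, we may restrict to the finite set $\Xi_0=\{\xi\in\Xi:\mathbb{E}_B(\xi)=0\}$. The first step is to merge the finitely many weak mixing conditions into one statement about a density-one set of times. For each pair $(\xi,\eta)\in\Xi_0^2$, the relative weak mixing hypothesis (together with the remark that arbitrary $\zeta$ are allowed once $\mathbb{E}_B(\xi)=0$) gives $\frac1m\sum_{n<m}\|\mathbb{E}_B(\eta^*\alpha^n(\xi))\|_2^2\to0$. We apply this also to the pairs $(\eta,\xi)$, which handles negative times. Indeed, by $\alpha$-equivariance of $\mathbb{E}_B$,
\[
\mathbb{E}_B(\eta^*\alpha^{-n}(\xi))=\alpha^{-n}\bigl(\mathbb{E}_B(\alpha^n(\eta)^*\xi)\bigr)=\alpha^{-n}\bigl(\mathbb{E}_B(\xi^*\alpha^n(\eta))^*\bigr),
\]
and $\alpha$ preserves $\|\cdot\|_2$. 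Summing the finitely many Cesàro averages, we get that $c(n):=\sum_{\xi,\eta\in\Xi_0}\|\mathbb{E}_B(\eta^*\alpha^{n}(\xi))\|_2^2$ satisfies $\frac1m\sum_{|n|<m}c(n)\to0$. By the standard Koopman--von Neumann lemma, for each $\varepsilon>0$ the set $S_\varepsilon=\{n\in\N: c(n)\ge\varepsilon \text{ or } c(-n)\ge\varepsilon\}$ has upper density zero.

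The second step constructs the sequence $(m_k)$ inductively so that all differences avoid the bad sets at ever finer scales. Put $\varepsilon_k=2^{-k}$. Suppose $m_1<\dots<m_k$ have been chosen. We choose $m_{k+1}>m_k$ such that $m_{k+1}-m_i\notin S_{\varepsilon_{k+1}}$ for all $i\le k$. Such a choice is possible because the set of forbidden values of $m_{k+1}$ is $\bigcup_{i\le k}(m_i+S_{\varepsilon_{k+1}})$. This is a finite union of translates of a set of upper density zero, so it has density zero and its complement in $(m_k,\infty)$ is nonempty (indeed infinite).

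Then for $k<\ell$ we have $m_\ell-m_k\notin S_{\varepsilon_\ell}$, so $c(\pm(m_\ell-m_k))<\varepsilon_\ell$. Hence each term $\|\mathbb{E}_B(\eta^*\alpha^{m_\ell-m_k}(\xi))\|_2$ and $\|\mathbb{E}_B(\eta^*\alpha^{m_k-m_\ell}(\xi))\|_2$ is at most $\varepsilon_{\max\{k,\ell\}}^{1/2}$. This bound tends to $0$ as $\max\{k,\ell\}\to\infty$ with $k\ne\ell$.

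The only delicate points are the following two. First, the reduction of negative times to positive ones, which uses equivariance of $\mathbb{E}_B$ and the remark that $\zeta$ may be arbitrary once $\mathbb{E}_B(\xi)=0$; here both $\xi$ and $\eta$ are centred anyway. Second, the density-zero bookkeeping in the inductive step, which is elementary.
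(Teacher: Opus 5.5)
Your proof is correct and takes essentially the same route as the paper: a Chebyshev bound turns the Ces\`aro condition into density-zero bad sets, and the identity $\|\mathbb{E}_B(\eta^*\alpha^{-n}(\xi))\|_2=\|\mathbb{E}_B(\xi^*\alpha^{n}(\eta))\|_2$ handles negative times. As in the paper, $m_{k+1}$ is then chosen outside a finite union of translates of the bad set at scale $k+1$; the remaining differences (summing squares over pairs instead of taking a maximum, thresholds $2^{-k}$ instead of $1/k$) are cosmetic.
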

\end{samepage}

\begin{proof}
Let $\Xi_0=\{\xi\in\Xi\mid\mathbb{E}_B\xi=0\}$. If $\Xi_0$ is empty, any
strictly increasing sequence works. Otherwise, for $\xi,\eta\in\Xi_0$, set
$q_{\xi,\eta}(n)=\|\mathbb{E}_B(\eta^*\alpha^n(\xi))\|_2$.
For each positive integer $r$, relative weak mixing implies that
$\{n\ge1\mid q_{\xi,\eta}(n)\ge1/r\}$ has density zero, since its proportion
in $\{1,\ldots,N\}$ is bounded by
$r^2N^{-1}\sum_{n=1}^Nq_{\xi,\eta}(n)^2$.
Equivariance, preservation of adjoints and invariance of the $L^2$-norm give
$q_{\xi,\eta}(-n)=q_{\eta,\xi}(n)$. Hence
$S_r=\{n\ge1\mid\max_{\xi,\eta\in\Xi_0}\max_{\varepsilon\in\{1,-1\}} q_{\xi,\eta}(\varepsilon n)<1/r\}$
has density one. Choose $m_1\ge1$ and, recursively, choose
$m_k>m_{k-1}$ in $\bigcap_{j<k}(m_j+S_k)$; this finite intersection has
density one. If $\ell>k$, then $m_\ell-m_k\in S_\ell$; if $k>\ell$,
use the negative-time inequality from $S_k$. The required norm is thus
less than $1/\max(k,\ell)$ for every unequal pair of indices.
\end{proof}

The following two lemmas turn pairwise asymptotic orthogonality into weak convergence.

\begin{lem}\label{lem:weak-zero-from-pairwise}
Let $\widehat{\mathcal H}$ be a Hilbert space and let $(\eta_k)_{k\in\N}\subseteq\widehat{\mathcal H}$ be a bounded sequence. Assume that $\langle\eta_k,\eta_\ell\rangle\to0$ as
$\max\{k,\ell\}\to\infty$, $k\neq\ell$. Then $\eta_k\to0$ weakly as $k$ tends to infinity.
\end{lem}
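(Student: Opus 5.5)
The plan is to show $\langle\eta_k,\zeta\rangle\to0$ for every $\zeta\in\widehat{\mathcal H}$ by contradiction, combining a subsequence argument with a Bessel-type estimate. Let $C=\sup_k\|\eta_k\|$. Suppose the claim fails for some $\zeta$. Then there are $\varepsilon>0$ and a subsequence $(\eta_{k_j})_j$ with $|\langle\eta_{k_j},\zeta\rangle|\ge\varepsilon$ for all $j$. Since $k_j\to\infty$, the hypothesis still holds along the subsequence: $\langle\eta_{k_i},\eta_{k_j}\rangle\to0$ as $\max\{i,j\}\to\infty$ with $i\neq j$. Passing to a further subsequence, we may also assume $|\langle\eta_{k_i},\eta_{k_j}\rangle|\le\delta$ for all $i\neq j$, for any prescribed $\delta>0$. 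Note that this bound is not automatic for small indices, so it must be arranged. We choose the terms recursively: each new index is taken so large that the new vector is $\delta$-almost orthogonal to all previously chosen ones. This is possible because, for fixed $i$, $\langle\eta_{k_i},\eta_{k}\rangle\to0$ as $k\to\infty$.

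For the key estimate, fix $N$ and multiply by unimodular scalars $c_j$ so that $c_j\langle\eta_{k_j},\zeta\rangle=|\langle\eta_{k_j},\zeta\rangle|$. Then
\[
N\varepsilon\le\Bigl|\Bigl\langle\sum_{j=1}^N c_j\eta_{k_j},\zeta\Bigr\rangle\Bigr|\le\|\zeta\|\,\Bigl\|\sum_{j=1}^N c_j\eta_{k_j}\Bigr\|,
\]
and
\[
\Bigl\|\sum_{j=1}^N c_j\eta_{k_j}\Bigr\|^2\le NC^2+N(N-1)\delta.
\]
Combining the two gives $N^2\varepsilon^2\le\|\zeta\|^2(NC^2+N^2\delta)$. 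Choosing $\delta<\varepsilon^2/(2\|\zeta\|^2)$ (the case $\zeta=0$ is trivial) and letting $N\to\infty$ gives a contradiction.

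The only delicate point is the uniform almost-orthogonality of the chosen subsequence. The hypothesis only controls pairs with large maximum index, and the recursive choice described above handles exactly this. The rest is the routine Hilbert space computation just given.
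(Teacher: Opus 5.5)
Your proof is correct, but it takes a different route from the paper.

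\textbf{The paper's route.} The paper argues by compactness. Closed balls in a Hilbert space are weakly sequentially compact, so it suffices to identify the weak cluster points. If $\eta_{k_\ell}\to\eta$ weakly, then for each fixed $j$ one has $\langle\eta_{k_j},\eta\rangle=\lim_\ell\langle\eta_{k_j},\eta_{k_\ell}\rangle=0$. Letting $j\to\infty$ then gives $\|\eta\|^2=0$, so $0$ is the only weak cluster point.

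\textbf{Your route.} You avoid weak compactness altogether. You extract a subsequence that is bounded away from zero against $\zeta$ and uniformly $\delta$-almost orthogonal. A Bessel-type norm estimate on $\sum_j c_j\eta_{k_j}$ then forces $\varepsilon^2\le\|\zeta\|^2(C^2/N+\delta)$ for every $N$, which is the contradiction. Your bookkeeping is sound on two points:
\begin{itemize}
\item $\delta$ depends only on $\varepsilon$ and $\zeta$, so it can be fixed before the second extraction.
\item The extraction preserves the lower bound $|\langle\eta_{k_j},\zeta\rangle|\ge\varepsilon$.
\end{itemize}

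\textbf{Comparison.} The paper's argument is shorter. Yours is fully elementary and quantitative: it bounds how many $\delta$-almost orthogonal vectors of norm at most $C$ can have inner product at least $\varepsilon$ in modulus with $\zeta$. Both arguments actually use only the weaker fact that $\langle\eta_i,\eta_k\rangle\to0$ as $k\to\infty$ for each fixed $i$. Your recursive selection is designed to work under that weaker hypothesis.

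\textbf{A simplification.} Under the hypothesis as stated, the recursion can be replaced by discarding an initial segment. Choose $K$ such that $|\langle\eta_k,\eta_\ell\rangle|<\delta$ whenever $k\neq\ell$ and $\max\{k,\ell\}\ge K$. Then every pair of indices beyond $K$ already satisfies the uniform bound.
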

\begin{proof}
As closed balls in $\widehat{\mathcal H}$ are weakly sequentially compact, it suffices to show that every weak cluster point of $(\eta_k)_{k \in \N}$ equals $0$. Let $\eta\in\widehat{\mathcal H}$ and suppose that $\eta_{k_\ell}\to\eta$
weakly as $\ell$ tends to infinity. Fix $j\in \N$. Since $\langle\eta_{k_j},\eta_{k_\ell}\rangle\to0$ as $\ell\to\infty$, we see that $\langle\eta_{k_j},\eta\rangle=0$. Letting $j\to\infty$ and
using again the assumed weak convergence gives $\|\eta\|^2=0$. Thus $\eta=0$.
\end{proof}

\begin{lem}\label{lem:stinespring-l2-small}
Let $(N,\tau)$ be a tracial von Neumann algebra, let $Q\subseteq N$ be a von Neumann subalgebra, and
let $\mathbb{E}_Q$ be the trace-preserving conditional expectation from $N$ onto $Q$. Let $(a_k)_{k\in\N}$ and $(b_k)_{k\in\N}$
be bounded sequences of elements in $N$. Assume that
$\|\mathbb{E}_Q(a_ka_\ell^*)\|_2\to0$ as $\max\{k,\ell\}\to\infty$, $k\neq\ell$. Then
$b_k\mathbb{E}_Q(a_k)\to0$ in the weak operator topology as $k$ tends to infinity.
\end{lem}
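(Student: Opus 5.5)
The plan is to realise the quantities $\langle b_k\mathbb{E}_Q(a_k)\hat x,\hat y\rangle$ as inner products of a suitable sequence of vectors against a fixed vector in an auxiliary Hilbert space. The hypothesis $\|\mathbb{E}_Q(a_ka_\ell^*)\|_2\to0$ should then make that sequence pairwise asymptotically orthogonal, and Lemma~\ref{lem:weak-zero-from-pairwise} will finish the argument. The natural auxiliary space is the relative tensor product $\widehat{\mathcal H}=L^2(N)\otimes_Q L^2(N)$, i.e.\ the Stinespring space of $\mathbb{E}_Q$. On elementary tensors of bounded vectors it has inner product
\[
\langle x\otimes_Q\hat y,\;x'\otimes_Q\hat y'\rangle=\tau\bigl(y'^*\mathbb{E}_Q(x'^*x)\,y\bigr),\qquad x,x',y,y'\in N .
\]
Equivalently, $\widehat{\mathcal H}$ is the separation–completion of $N\odot N$ for this positive semidefinite form; positivity holds because $\mathbb{E}_Q$ is completely positive.

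\emph{Step 1 (reduction).} Since $\sup_k\|b_k\mathbb{E}_Q(a_k)\|\le\sup_k\|b_k\|\sup_k\|a_k\|<\infty$ and $\hat N$ is dense in $L^2(N)$, it suffices to show that
\[
\langle b_k\mathbb{E}_Q(a_k)\hat x,\hat y\rangle=\tau\bigl(y^*b_k\mathbb{E}_Q(a_k)x\bigr)=\tau\bigl(xy^*b_k\mathbb{E}_Q(a_k)\bigr)\to0
\]
for all $x,y\in N$. Put $c=xy^*\in N$.

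\emph{Step 2 (the vectors).} Define $\xi_k=a_k^*\otimes_Q\widehat{b_k^*c^*}\in\widehat{\mathcal H}$. The formula gives
\[
\langle 1\otimes_Q\hat 1,\xi_k\rangle=\tau\bigl(cb_k\mathbb{E}_Q(a_k)\bigr),
\]
which is exactly the quantity in Step 1. Moreover,
\[
\langle\xi_k,\xi_\ell\rangle=\tau\bigl(cb_\ell\,\mathbb{E}_Q(a_\ell a_k^*)\,b_k^*c^*\bigr).
\]
Hence $|\langle\xi_k,\xi_\ell\rangle|\le\|c\|^2\sup_j\|b_j\|^2\,\|\mathbb{E}_Q(a_\ell a_k^*)\|_1$. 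Since $\|\mathbb{E}_Q(a_\ell a_k^*)\|_1\le\|\mathbb{E}_Q(a_\ell a_k^*)\|_2=\|\mathbb{E}_Q(a_ka_\ell^*)^*\|_2$, this tends to $0$ as $\max\{k,\ell\}\to\infty$ with $k\ne\ell$. The same estimate with $\ell=k$ and the trivial bound $\|\mathbb{E}_Q(a_ka_k^*)\|_1\le\sup_j\|a_j\|^2$ shows that $(\xi_k)$ is bounded.

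\emph{Step 3 (conclusion).} Lemma~\ref{lem:weak-zero-from-pairwise} gives $\xi_k\to0$ weakly. Pairing with the fixed vector $1\otimes_Q\hat1$ yields $\tau(cb_k\mathbb{E}_Q(a_k))\to0$, and Step 1 then gives the weak operator convergence.

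The only point needing care is the bookkeeping of adjoints in the relative tensor product inner product: one has to place $b_k$ inside the second leg of $\xi_k$, so that the varying $b_k$ is absorbed into the vectors rather than obstructing a weak-limit argument. One also has to check that the form is positive semidefinite, which follows from complete positivity of $\mathbb{E}_Q$. Everything else is routine.
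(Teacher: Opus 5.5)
Your proof is correct and is essentially the paper's argument. Your space $L^2(N)\otimes_Q L^2(N)$ is the Stinespring space of $\mathbb{E}_Q$, your vectors $\xi_k=a_k^*\otimes_Q\widehat{b_k^*c^*}$ are exactly the paper's $T_k^*\widehat{c^*}=\pi(a_k^*)Vb_k^*\widehat{c^*}$, and $1\otimes_Q\hat 1=V\hat 1$. The only cosmetic difference is that you use traciality to merge the two test vectors into $c=xy^*$ and pair against $V\hat 1$, whereas the paper extends weak convergence of $T_k^*\eta$ to all $\eta\in L^2(N)$ by approximation and then pairs against arbitrary vectors $V\xi$.
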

\begin{proof}
Let $(\pi,\widehat{\cH},V)$ be a Stinespring dilation for $\mathbb{E}_Q$, viewed as a u.c.p.\ map on
$N\subseteq \mathbb B(L^2(N, \tau))$, so that $\mathbb{E}_Q(x)=V^*\pi(x)V$ for $x\in N$ (\cite[Theorem~1.5.3]{BroOza08}). For each $k \in \N$ set
$T_k=b_kV^*\pi(a_k)\in \mathbb B(\widehat{\cH};L^2(N, \tau))$. Then for $k, \ell \in \N$ we have
$T_kT_\ell^*=b_k\mathbb{E}_Q(a_ka_\ell^*)b_\ell^*$.
Fix $\eta\in N$. Setting $z_{k,\ell}=\mathbb{E}_Q (a_ka_\ell^*)$ we have
\[
|\langle T_k^*\eta,T_\ell^*\eta\rangle|=|\tau(\eta^*b_{\ell}z_{\ell,k}b_k^*\eta)|\le
\|b_k^*\eta\eta^*b_{\ell}\|_2\|z_{\ell,k}\|_2.
\]
Since $(b_k)_{k\in\N}$ is bounded, the displayed expression tends to $0$
as $\max\{k,\ell\}\to\infty$, $k\neq\ell$. By approximation and the uniform bound on
$(T_k)_{k\in \N}$, the same holds for every $\eta\in L^2(N,\tau)$. Lemma~\ref{lem:weak-zero-from-pairwise} gives
$T_k^*\eta\to0$ weakly for every $\eta\in L^2(N, \tau)$. Thus also $b_k\mathbb{E}_Q(a_k)=T_kV\to0$ in the weak operator topology.
\end{proof}

\subsection{Conjugation and centralizers}\label{subsec:eci-prelim}
The \emph{ECI property} introduced below says that conjugation by high powers of a fixed element
eventually moves any finite set outside the centralizer off any other finite set.
We shall show that for ICC groups it coincides with the \emph{unique roots property}, which we recall next.
For $g\in\Gamma$, write $C_\Gamma(g)=\{h\in\Gamma\mid hg=gh\}$ for its centralizer.

{

\begin{definition}A group $\Gamma$ satisfies the unique roots property if for every $g\in \Gamma$ and $n\in\mathbb N$ there exists at most one $f\in \Gamma$ such that $f^n=g$.
\end{definition}}

Examples of groups with the unique roots property include free groups and, more generally, torsion-free hyperbolic groups \cite[Lemma~2.2]{BB}, as well as diagram groups (including Thompson's group $F$) \cite[Corollary 2.26]{genevois2025diagram}.

\begin{definition}
\label{def:strongeci}
A group $\Gamma$ satisfies the ECI property if for every
$e\neq g\in \Gamma$ and every pair of
finite sets $F\Subset \Gamma\setminus C_\Gamma(g)$ and $R \Subset \Gamma$ there exists $N \in \N$ such that
$g^{n}Fg^{-n}\cap R=\emptyset$ for all $n >N$.
\end{definition}

{

\begin{lem}\label{lemma: unique roots -> ECI}
Let $\Gamma$ be a countable ICC group. Then it satisfies the ECI property if and only if it satisfies    the unique  roots property.
\end{lem}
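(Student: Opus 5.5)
The plan is to first recast the ECI property as a statement about orbits of conjugation, and then prove the two implications separately. Only the direction from ECI to unique roots will use the ICC hypothesis.

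\emph{Reformulation.} Fix $e\neq g\in\Gamma$ and $h\in\Gamma$, and consider the sequence $n\mapsto g^nhg^{-n}$. If two terms agree, say $g^nhg^{-n}=g^{n+k}hg^{-(n+k)}$ with $k>0$, then $h$ commutes with $g^k$. Conversely, if $h$ commutes with $g^k$ for some $k>0$, then $g^{jk}hg^{-jk}=h$ for every $j\in\N$. So the sequence visits $h$ infinitely often. In that case ECI fails for the data $F=\{h\}$ and $R=\{h\}$, provided $h\notin C_\Gamma(g)$.

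\emph{Unique roots $\Rightarrow$ ECI.} Take $g\neq e$ and $h\notin C_\Gamma(g)$, and suppose two terms coincide, so $hg^kh^{-1}=g^k$ for some $k>0$. This says $(hgh^{-1})^k=g^k$, and unique roots then gives $hgh^{-1}=g$, contradicting $h\notin C_\Gamma(g)$. Hence the map $n\mapsto g^nhg^{-n}$ is injective on $\N$ for each $h\in F$. Consequently, for finite $F$ and $R$, only finitely many pairs $(n,h)\in\N\times F$ satisfy $g^nhg^{-n}\in R$. Choosing $N$ larger than all such $n$ gives $g^nFg^{-n}\cap R=\emptyset$ for all $n>N$. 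This direction does not need ICC.

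\emph{ECI $\Rightarrow$ unique roots.} The first step is to show that ECI together with ICC forces $\Gamma$ to be torsion-free. Let $t\neq e$ have finite order $d$. Because $\Gamma$ is ICC, the conjugacy class of $t$ is infinite, so $t$ is not central and there is some $h\notin C_\Gamma(t)$. But $t^{jd}ht^{-jd}=h$ for all $j$, which contradicts ECI with $g=t$ and $F=R=\{h\}$.

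Now suppose $x^m=y^m$ with $x\neq y$ and $m\ge2$. By torsion-freeness, $x\neq e$. If $y\notin C_\Gamma(x)$, then $y$ commutes with $x^m=y^m$, so $x^{jm}yx^{-jm}=y$ for all $j$. This contradicts ECI with $g=x$ and $F=R=\{y\}$. If instead $y\in C_\Gamma(x)$, then $(x^{-1}y)^m=x^{-m}y^m=e$ while $x^{-1}y\neq e$, which contradicts torsion-freeness.

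The main obstacle is this commuting case, where ECI says nothing directly about the pair $x,y$. It is dealt with by first extracting torsion-freeness from ICC, and this is exactly the point where the ICC hypothesis is used.
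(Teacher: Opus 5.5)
Your proof is correct and follows essentially the same route as the paper. One direction uses unique roots to make $n\mapsto g^nhg^{-n}$ injective. The converse splits into the non-commuting case, which contradicts ECI directly, and the commuting case, which reduces to torsion that ICC plus ECI rules out. The only difference is cosmetic: you take $F=R=\{h\}$ along the return times $n$ divisible by the order, whereas the paper takes $F$ to be the finite $g$-conjugation orbit $\{g^ihg^{-i}\}$, which is invariant under conjugation by $g$.
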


\begin{proof}
We may assume that $\Gamma$ is non-trivial.

Assume that $\Gamma$  satisfies    the unique roots property.    Fix $g\in \Gamma\setminus \{e\}$ and let $f\in \Gamma\setminus C_\Gamma(g)$. Assume that there exist $n,m\in\mathbb N$, $n<m$, such that $g^nfg^{-n}=g^mfg^{-m}$. Then we have
    $(f^{-1}gf)^{m-n}=f^{-1}g^{m-n}f=g^{m-n}$. The unique roots property implies that $f^{-1}gf=g$, which contradicts the fact that $f$ does not belong to the centralizer of $g$. Thus, the elements $g^nfg^{-n}$, $n\in\mathbb N$, are pairwise distinct.

   Now let $F\Subset \Gamma\setminus C_\Gamma(g)$ and $R \Subset \Gamma$ be finite sets.
For each $f\in F$ and $r\in R$, there is at most one $n\in\N$ such that $g^nfg^{-n}=r$.
Thus there are only finitely many such integers. Choose $N\geq1$ larger than all of them,
taking $N=1$ if there are none. Then $g^nFg^{-n}\cap R=\emptyset$ for every $n>N$, proving ECI.

     Assume then that $\Gamma$ has the ECI property and does not have the unique  roots property. Then there exist $g\neq h\in \Gamma$ and $n\in\mathbb N$ such that $g^n=h^n$. Assume first $g,h\neq e$. If $h\notin C_\Gamma(g)$, set $F=\{g^ihg^{-i}\mid i=0,\ldots,n-1\}$. Clearly, $F\subseteq \Gamma\setminus C_\Gamma(g)$, but $gFg^{-1}=F$, which contradicts the ECI property. Let then $h\in C_\Gamma(g)$. Then $(h^{-1}g)^n=e$. Thus, we arrive at the case $g^n=e$ for some $g\neq e$. As $\Gamma$ is assumed to be ICC, $g$ cannot belong to the centre of $\Gamma$. Take $h$ not commuting with $g$ and set $F$ as above. We then again obtain a contradiction to the ECI property, which finishes the proof.
\end{proof}
}

\section{The commuting square}\label{sec:commuting-square}
In this section, we isolate the mechanism behind all our classification results. Let $(M,\tau)$ be a
tracial von Neumann algebra and let $A,\cP\subseteq M$ be von Neumann subalgebras, with
trace-preserving conditional expectations $\mathbb{E}_A$ and $\mathbb{E}_{\cP}$. Following \cite{Popa_csq_invent_math,Popa_csq_acta_math,GDJ_csq}, we say that $\cP$ and
$A$ form a \emph{commuting square} if $\mathbb{E}_A\mathbb{E}_{\cP}=\mathbb{E}_{\cP}\mathbb{E}_A$. The following characterisation is well known; see, for example, \cite[Proposition 9.51]{EvansKawahigashi}.
\begin{lem}\label{lem:commuting-square}
Let $(M,\tau)$ be a tracial von Neumann algebra, let $A,\cP\subseteq M$ be von Neumann
subalgebras, and let $B=\cP\cap A$. The following conditions are equivalent.
\begin{enumerate}
\item[\textup{(i)}] $\mathbb{E}_A(\cP)\subseteq\cP$;
\item[\textup{(ii)}] $\mathbb{E}_{\cP}(A)\subseteq A$;
\item[\textup{(iii)}] $\mathbb{E}_A\mathbb{E}_{\cP}=\mathbb{E}_{\cP}\mathbb{E}_A$.
\end{enumerate}
In this case $\mathbb{E}_A\mathbb{E}_{\cP}=\mathbb{E}_{\cP}\mathbb{E}_A=\mathbb{E}_B$. In particular,
$\mathbb{E}_A(\cP)=B=\mathbb{E}_{\cP}(A)$ and $\mathbb{E}_{\cP}(a)=\mathbb{E}_B(a)$ for every $a\in A$,
and the diagram
\[
\begin{tikzcd}
M \arrow[r,"\mathbb{E}_{\cP}"] \arrow[d,"\mathbb{E}_A"'] & \cP \arrow[d,"\mathbb{E}_A"]\\
A \arrow[r,"\mathbb{E}_B"'] & B=\cP\cap A
\end{tikzcd}
\]
commutes in the sense that $\mathbb{E}_A\circ\mathbb{E}_{\cP}=\mathbb{E}_B\circ\mathbb{E}_A$.
\end{lem}
\begin{comment}\begin{proof}
Assume (i). Since $\mathbb{E}_A(\cP)\subseteq\cP\cap A=B$ and $x=\mathbb{E}_A(x)$ for every $x\in B$, we
have $\mathbb{E}_A(\cP)=B$. We claim that $\mathbb{E}_{\cP}(a)=\mathbb{E}_B(a)$ for every $a\in A$.
Indeed, let $a\in A$ and $m\in\cP$. Since $\mathbb{E}_A(m)\in B$, we get
\[
\tau\big(\mathbb{E}_{\cP}(a)m\big)=\tau(am)=\tau\big(a\mathbb{E}_A(m)\big)
=\tau\big(\mathbb{E}_B(a)\mathbb{E}_A(m)\big)=\tau\big(\mathbb{E}_B(a)m\big).
\]
As $\mathbb{E}_B(a)\in B\subseteq\cP$, taking $m=(\mathbb{E}_{\cP}(a)-\mathbb{E}_B(a))^*$ and using
faithfulness of $\tau$ proves the claim. Consequently,
$\mathbb{E}_{\cP}\mathbb{E}_A=\mathbb{E}_B\mathbb{E}_A=\mathbb{E}_B$. Viewing the three expectations as
orthogonal projections on $L^2(M,\tau)$ and taking adjoints, we also get
$\mathbb{E}_A\mathbb{E}_{\cP}=\mathbb{E}_B$. This proves (iii) together with the remaining assertions.
Since the statement is symmetric in $A$ and $\cP$, (ii) implies (iii) in the same way. Finally, if
(iii) holds, then $\mathbb{E}_A(\cP)=\mathbb{E}_A\mathbb{E}_{\cP}(\cP)=\mathbb{E}_{\cP}\mathbb{E}_A(\cP)
\subseteq\cP$, and similarly $\mathbb{E}_{\cP}(A)\subseteq A$.
\end{proof}
\end{comment}

\begin{remark}\label{rem:cs-crossed}
Let $\sigma$ be a trace-preserving action of $\Gamma$ on a tracial von Neumann algebra $(Q,\tau_Q)$ and let
$\cP\subseteq Q\rtimes_\sigma\Gamma$ be a von Neumann subalgebra. For $A=Q$ we have
$\mathbb{E}_A=\mathbb{E}$, so $\cP$ and $Q$ form a commuting square if and only if
$\mathbb{E}(\cP)\subseteq\cP$. In this case $\mathbb{E}(\cP)=\cP\cap Q$ and
$\mathbb{E}_{\cP}(f)=\mathbb{E}_{\mathbb{E}(\cP)}(f)$ for every $f\in Q$. If, moreover, $\cP$ is
$\Gamma$-invariant, then $\mathbb{E}(\cP)$ is a $\Gamma$-invariant von Neumann subalgebra of $Q$.
When $(Q,\tau_Q)=(\Linf,\nu)$, this means that $\mathbb{E}(\cP)=L^\infty(Y,\eta)$ for a $\Gamma$-equivariant factor map
from $(X,\nu)$ to $(Y,\eta)$. In the scalar case, Lemma~\ref{lem:commuting-square} shows that $\mathbb{E}(\cP)=\C1$ if and only if $\mathbb{E}_{\cP}(f)=\tau_Q(f)1$ for every
$f\in Q$. Every
algebra of the form $B\rtimes N$ forms a commuting square with $Q$. For a p.m.p.~action, the same remarks apply to
$L(\Gamma)\bar\otimes\Linf$, with $A=\mathbb{C}1\bar{\otimes}\Linf$.
\end{remark}

The next theorem gives the vanishing statement used in both the crossed product and the tensor
product. It applies relative weak mixing to elements orthogonal to the intersection of two invariant subalgebras.

\begin{thm}\label{thm:cs-engine}
Let $(M,\tau)$ be a tracial von Neumann algebra, let $\theta$ be a trace-preserving action of $\Gamma$ on $M$, and let $A\subseteq M$ be a $\theta$-invariant von Neumann subalgebra.
Write $\sigma=\theta|_A$. Let $\cP\subseteq M$ be a
$\theta$-invariant von Neumann subalgebra which forms a commuting square with $A$, and let
$B=\cP\cap A$. Let $g\in\Gamma$ and let $w\in M$ be a unitary with $\theta_g(w)=w$. If $\sigma_g$
is weakly mixing relative to $B$, then $\mathbb{E}_{\cP}(fw)=0$ for every
$f\in A$ with $\mathbb{E}_B(f)=0$.
\end{thm}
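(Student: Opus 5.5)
The idea is to move $f$ along a well-chosen sequence of powers of $\theta_g$. This keeps $\|\mathbb{E}_{\cP}(fw)\|_2$ fixed while, by relative weak mixing, making the translated copies asymptotically orthogonal in a sense that Lemma~\ref{lem:stinespring-l2-small}, applied with the subalgebra $\cP$ in place of $Q$, can exploit. Fix $f\in A$ with $\mathbb{E}_B(f)=0$, and set $x=\mathbb{E}_{\cP}(fw)\in\cP$.

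\emph{Step 1: equivariance.} Since $\cP$ is $\theta$-invariant and $\theta_g$ preserves $\tau$, the expectation $\mathbb{E}_{\cP}$ commutes with $\theta_g$. As $\theta_g(w)=w$, for every $n$ we get
\[
\theta_g^n(x)=\mathbb{E}_{\cP}\bigl(\sigma_g^n(f)w\bigr).
\]
In particular $\|\theta_g^n(x)\|_2=\|x\|_2$.

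\emph{Step 2: choice of times.} Apply Lemma~\ref{lem:relative-pairwise-wm-selection} to $\alpha=\sigma_g$ and $\Xi=\{f^*\}$; note that $\mathbb{E}_B(f^*)=\mathbb{E}_B(f)^*=0$. This gives a strictly increasing sequence $(m_k)$ such that $\|\mathbb{E}_B(f\,\sigma_g^{m_\ell-m_k}(f^*))\|_2\to0$ as $\max\{k,\ell\}\to\infty$ with $k\ne\ell$. Put $f_k=\sigma_g^{m_k}(f)$, $a_k=f_kw$ and $x_k=\theta_g^{m_k}(x)=\mathbb{E}_{\cP}(a_k)$.

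\emph{Step 3: the key estimate.} Since $w$ is unitary, $a_ka_\ell^*=f_kf_\ell^*\in A$. By the commuting square (Lemma~\ref{lem:commuting-square}), $\mathbb{E}_{\cP}$ agrees with $\mathbb{E}_B$ on $A$, so
\[
\mathbb{E}_{\cP}(a_ka_\ell^*)=\mathbb{E}_B(f_kf_\ell^*)=\sigma_g^{m_\ell}\bigl(\mathbb{E}_B(\sigma_g^{m_k-m_\ell}(f)f^*)\bigr).
\]
Its $2$-norm equals $\|\mathbb{E}_B(\sigma_g^{m_k-m_\ell}(f)f^*)\|_2=\|\mathbb{E}_B(f\,\sigma_g^{m_k-m_\ell}(f^*))\|_2$, by taking adjoints. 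The selection in Lemma~\ref{lem:relative-pairwise-wm-selection} controls both signs of $m_k-m_\ell$, so this tends to $0$ for $k\neq\ell$.

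\emph{Step 4: conclusion.} Apply Lemma~\ref{lem:stinespring-l2-small} in $(M,\tau)$ with the subalgebra $\cP$, the bounded sequence $(a_k)$, and $b_k=x_k^*$; the $b_k$ are bounded by $\|x\|\le\|f\|$. The lemma gives $x_k^*x_k=b_k\mathbb{E}_{\cP}(a_k)\to0$ in the weak operator topology. Pairing with $\hat1$ yields $\tau(x_k^*x_k)\to0$. But $\tau(x_k^*x_k)=\|x\|_2^2$ for all $k$ by Step 1, so $x=0$.

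The main obstacle is finding a pairing that actually produces decay. The naive inner products $\langle x_k,x_\ell\rangle$ only show asymptotic orthogonality of the $x_k$, which is not contradictory by itself. The trick is to multiply inside $\cP$ and use the commuting square, which turns $\mathbb{E}_{\cP}(a_ka_\ell^*)$ into a relative correlation $\mathbb{E}_B(\cdot)$ within $A$. The point to check carefully is the bookkeeping of adjoints and signs in Step 3, so that it matches the hypothesis of Lemma~\ref{lem:relative-pairwise-wm-selection} with $\Xi=\{f^*\}$.
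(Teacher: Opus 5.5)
Your proposal is correct and follows the paper's proof essentially step for step. It uses the same selection from Lemma~\ref{lem:relative-pairwise-wm-selection} with $\Xi=\{f^*\}$, the same sequences $a_k=\sigma_g^{m_k}(f)w$ and $b_k=\theta_g^{m_k}(\mathbb{E}_{\cP}(fw))^*$, the commuting square to replace $\mathbb{E}_{\cP}$ by $\mathbb{E}_B$ on $a_ka_\ell^*\in A$, and Lemma~\ref{lem:stinespring-l2-small} paired with the trace vector. The only difference is cosmetic: in Step~3 you factor out $\sigma_g^{m_\ell}$ rather than $\sigma_g^{m_k}$, which is harmless because the lemma's conclusion holds for all unequal pairs, as you note.
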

\begin{proof}
Since $\cP$ is $\theta$-invariant and $\theta$ preserves $\tau$, the expectation $\mathbb{E}_{\cP}$
is $\theta$-equivariant; also $B$ is $\sigma$-invariant, so $\mathbb{E}_B$ commutes with $\sigma_g$.
By Lemma~\ref{lem:commuting-square}, $\mathbb{E}_{\cP}(a)=\mathbb{E}_B(a)$ for every $a\in A$.

Fix $f\in A$ with $\mathbb{E}_B(f)=0$; then also $\mathbb{E}_B(f^*)=0$. Apply
Lemma~\ref{lem:relative-pairwise-wm-selection} to $\alpha=\sigma_g$ and $\Xi=\{f^*\}$, and let
$(m_k)_{k\geq1}$ be the resulting sequence. Then
$\|\mathbb{E}_B(f\sigma_g^{m_\ell-m_k}(f^*))\|_2\to0$
as $\max\{k,\ell\}\to\infty$ with $k\neq\ell$.
Let $a_k=\theta_g^{m_k}(fw)=\sigma_g^{m_k}(f)w$ and
$b_k=\mathbb{E}_{\cP}(a_k)^*=\theta_g^{m_k}(\mathbb{E}_{\cP}(fw))^*$.
Both sequences are bounded by $\|f\|_\infty$. Since $w$ is a unitary, we have
$a_ka_\ell^*=\sigma_g^{m_k}(f)\sigma_g^{m_\ell}(f)^* =\sigma_g^{m_k}\big(f\sigma_g^{m_\ell-m_k}(f^*)\big)\in A$.
Consequently,
\[
\|\mathbb{E}_{\cP}(a_ka_\ell^*)\|_2=\|\mathbb{E}_B(a_ka_\ell^*)\|_2
=\big\|\mathbb{E}_B\big(f\sigma_g^{m_\ell-m_k}(f^*)\big)\big\|_2\longrightarrow0
\]
as $\max\{k,\ell\}\to\infty$, $k\neq\ell$. Lemma~\ref{lem:stinespring-l2-small} therefore gives
$b_k\mathbb{E}_{\cP}(a_k)\to0$ in the weak operator topology. Pairing with the trace vector, we obtain
\[
\|\mathbb{E}_{\cP}(fw)\|_2^2
=\tau\Big(\theta_g^{m_k}\big(\mathbb{E}_{\cP}(fw)^*\mathbb{E}_{\cP}(fw)\big)\Big)
=\tau\big(b_k\mathbb{E}_{\cP}(a_k)\big)\longrightarrow0 .
\]
Hence $\mathbb{E}_{\cP}(fw)=0$.
\end{proof}

We now specialise Theorem~\ref{thm:cs-engine} to the two ambient algebras of this paper. The first
corollary gives the crossed product statement of Theorem~\ref{thm:cs-intro} from the introduction.

\begin{cor}\label{cor:cs-crossed}
Let $\sigma$ be a trace-preserving action of $\Gamma$ on a tracial von Neumann algebra $(Q,\tau_Q)$ and let
$\cP\subseteq Q\rtimes_\sigma\Gamma$ be a $\Gamma$-invariant von Neumann subalgebra such that
$\mathbb{E}(\cP)\subseteq\cP$. Write $B=\mathbb{E}(\cP)=\cP\cap Q$ as in
Remark~\ref{rem:cs-crossed}. If $\sigma$ is pointwise weakly mixing relative to $B$, then
$\cP\subseteq B\rtimes\Gamma$.
\end{cor}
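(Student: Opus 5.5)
We apply Theorem~\ref{thm:cs-engine} with $M=Q\rtimes_\sigma\Gamma$, $\theta_s=\operatorname{Ad}(u_s)$ and $A=Q$. Then $\theta|_A=\sigma$, $\cP$ is $\theta$-invariant, and $\cP$ forms a commuting square with $Q$ by Remark~\ref{rem:cs-crossed}. Moreover $B=\cP\cap Q=\mathbb{E}(\cP)$. The goal is to show that every Fourier coefficient of every $x\in\cP$ lies in $B$, i.e.\ $\mathbb{E}(xu_g^*)\in B$ for all $g\in\Gamma$. Since $\mathbb{E}_B$ is the orthogonal projection of $L^2(Q)$ onto $L^2(B)$, it suffices to show that $\tau(x\,(fu_g)^*)=0$ whenever $f\in Q$ and $\mathbb{E}_B(f)=0$. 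Because $\tau(x(fu_g)^*)=\tau(\mathbb{E}_{\cP}(fu_g)^*x)$ for $x\in\cP$, the whole claim reduces to proving
\[
\mathbb{E}_{\cP}(fu_g)=0\qquad\text{for all } g\in\Gamma \text{ and all } f\in Q \text{ with } \mathbb{E}_B(f)=0.
\]

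For $g\neq e$, take $w=u_g$, a unitary with $\theta_g(w)=u_gu_gu_g^*=w$. By hypothesis $\sigma_g$ is weakly mixing relative to $B$, so Theorem~\ref{thm:cs-engine} gives $\mathbb{E}_{\cP}(fu_g)=0$ directly.

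For $g=e$ we need $\mathbb{E}_{\cP}(f)=0$. Lemma~\ref{lem:commuting-square} gives $\mathbb{E}_{\cP}(f)=\mathbb{E}_B(f)=0$ with no mixing assumption at all. Alternatively one can argue through the coefficients: $\mathbb{E}(x)\in B$ for $x\in\cP$ because $\mathbb{E}(\cP)=B$.

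Combining the two cases, every $x=\sum_g x_gu_g\in\cP$ has $x_g\in B$ for all $g$. Hence $x\in B\rtimes\Gamma$, the latter being identified with the set of elements of $M$ whose Fourier coefficients lie in $B$ (via the $L^2$ expansion and $\mathbb{E}_{B\rtimes\Gamma}(x)=\sum_g\mathbb{E}_B(x_g)u_g$). No step here is genuinely hard, since all the analytic work is already done in Theorem~\ref{thm:cs-engine}. The only point that needs care is checking that its hypotheses match: the invariance of $w=u_g$ under $\theta_g$, and the separate handling of $g=e$, where weak mixing of $\sigma_e$ fails but is not needed.
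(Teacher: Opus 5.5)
Your proposal is correct and follows the paper's proof essentially step by step. You reduce, via $\langle x_s,f\rangle=\langle x,\mathbb{E}_{\cP}(fu_s)\rangle$, to showing $\mathbb{E}_{\cP}(fu_s)=0$ for $f\in Q$ with $\mathbb{E}_B(f)=0$; you handle $s=e$ by the commuting-square lemma (Remark~\ref{rem:cs-crossed}) and $s\neq e$ by Theorem~\ref{thm:cs-engine} with $M=Q\rtimes_\sigma\Gamma$, $\theta=\operatorname{Ad}(u)$, $A=Q$ and $w=u_s$, exactly as the paper does.
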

\begin{proof}
To establish the claim, it is enough to show that $\mathbb{E}_{\cP}(fu_s)=0$ whenever $s\in\Gamma$
and $f\in Q$ satisfies $\mathbb{E}_B(f)=0$. Assuming this, let
$x\in\cP$ have Fourier coefficients $f_s=\mathbb{E}(xu_s^*)$, $s\in\Gamma$. For every $s\in\Gamma$
and every $f\in Q$ with $\mathbb{E}_B(f)=0$ we have
$\langle f_s,f\rangle=\tau\big((fu_s)^*x\big)=\langle x,\mathbb{E}_{\cP}(fu_s)\rangle=0$.
Since such $f$ are dense in $L^2(Q,\tau_Q)\ominus L^2(B,\tau_Q)$, it follows that
$f_s\in L^2(B,\tau_Q)\cap Q=B$ for every $s\in\Gamma$, that is, $x\in B\rtimes\Gamma$.

It remains to prove that $\mathbb{E}_{\cP}(fu_s)=0$ whenever $\mathbb{E}_B(f)=0$. For $s=e$,
this follows from Remark~\ref{rem:cs-crossed}, since $\mathbb{E}_{\cP}(f)=\mathbb{E}_B(f)=0$. For
$s\neq e$, we apply Theorem~\ref{thm:cs-engine} with $M=Q\rtimes_\sigma\Gamma$,
$\theta_h=\operatorname{Ad}(u_h)$ ($h \in \Gamma$), $A=Q$, $g=s$ and $w=u_s$; note that
$\theta_s(u_s)=u_s$.
\end{proof}

\begin{cor}\label{cor:cs-tensor}
Let $\sigma$ be a trace-preserving action of $\Gamma$ on a tracial von Neumann algebra $(Q,\tau_Q)$ and let
$\cP\subseteq L(\Gamma)\bar\otimes Q$ be a von Neumann subalgebra which is invariant under the diagonal
action and forms a commuting square with $\mathbb{C}1\bar{\otimes}Q$. Write
$\cP\cap(\mathbb{C}1\bar{\otimes}Q)=\mathbb{C}1\bar{\otimes} B$. If $\sigma$ is pointwise weakly mixing relative to $B$, then $\cP\subseteq L(\Gamma)\bar\otimes B$.
\end{cor}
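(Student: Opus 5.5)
The proof will mirror that of Corollary~\ref{cor:cs-crossed}, with $M=L(\Gamma)\bar\otimes Q$, the diagonal action $\theta$, and $A=\mathbb{C}1\bar\otimes Q$. The restriction $\theta|_A$ is $\sigma$ (under $1\otimes a\mapsto a$), and $\cP\cap A=\mathbb{C}1\bar\otimes B$ by hypothesis.

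\textbf{Reduction.} It suffices to show that
\[
\mathbb{E}_{\cP}\bigl(\lambda_t\otimes f\bigr)=0
\]
for every $t\in\Gamma$ and every $f\in Q$ with $\mathbb{E}_B(f)=0$. Indeed, suppose $x\in\cP$ has expansion $x=\sum_t\lambda_t\otimes f_t$. Then
\[
\tau_Q(f^*f_t)=\tau\bigl((\lambda_t\otimes f)^*x\bigr)=\langle x,\mathbb{E}_{\cP}(\lambda_t\otimes f)\rangle=0 .
\]
Density of such $f$ in $L^2(Q)\ominus L^2(B)$ then gives $f_t\in B$ for all $t$, so $x\in L(\Gamma)\bar\otimes B$.

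\textbf{Case $t=e$.} By Lemma~\ref{lem:commuting-square},
\[
\mathbb{E}_{\cP}(1\otimes f)=\mathbb{E}_{\mathbb{C}1\bar\otimes B}(1\otimes f)=1\otimes\mathbb{E}_B(f)=0 .
\]

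\textbf{Case $t\neq e$.} Apply Theorem~\ref{thm:cs-engine} with $g=t$ and $w=\lambda_t\otimes 1$. This $w$ is a unitary fixed by $\theta_t$, since
\[
\theta_t(\lambda_t\otimes1)=\lambda_{ttt^{-1}}\otimes1=\lambda_t\otimes1 .
\]
Pointwise relative weak mixing makes $\sigma_t$ weakly mixing relative to $B$, so the theorem yields $\mathbb{E}_{\cP}\bigl((1\otimes f)w\bigr)=0$. Since
\[
(1\otimes f)(\lambda_t\otimes1)=\lambda_t\otimes f,
\]
this is exactly the required vanishing.

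The only point needing care is checking that the hypotheses of Theorem~\ref{thm:cs-engine} hold in the tensor setting: $\theta$ is trace-preserving, $A$ is $\theta$-invariant, and weak mixing of $\sigma_t$ relative to $B$ on $Q$ is the same as weak mixing of $\theta_t|_A$ relative to $\mathbb{C}1\bar\otimes B$. All three follow from the identification $A\cong Q$. No real obstacle is expected beyond this.
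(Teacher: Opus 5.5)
Your proposal is correct and follows the paper's proof essentially step for step. You reduce to $\mathbb{E}_{\cP}(\lambda_t\otimes f)=0$, handle $t=e$ via Lemma~\ref{lem:commuting-square}, and handle $t\neq e$ via Theorem~\ref{thm:cs-engine} with $g=t$ and $w=\lambda_t\otimes1$, which is exactly how the paper argues.
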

\begin{proof}
The argument is the same as for Corollary~\ref{cor:cs-crossed}. Let
$x=\sum_{t\in\Gamma}\lambda_t\otimes f_t\in\cP$, with $f_t\in Q$. For $t\in\Gamma$ and $f\in Q$ with
$\mathbb{E}_B(f)=0$ we have
$\tau_Q(f^*f_t)=\langle x,\lambda_t\otimes f\rangle=\langle x,\mathbb{E}_{\cP}(\lambda_t\otimes f)\rangle$,
so it is enough to show that $\mathbb{E}_{\cP}(\lambda_t\otimes f)=0$. For $t=e$ this follows from
Lemma~\ref{lem:commuting-square}. For $t\neq e$, we apply Theorem~\ref{thm:cs-engine} with
$M=L(\Gamma)\bar\otimes Q$, the diagonal action $\theta$, $A=\mathbb{C}1\bar{\otimes}Q$, $g=t$ and
$w=\lambda_t\otimes1$; note that $\theta_t(\lambda_t\otimes1)=\lambda_{ttt^{-1}}\otimes1=\lambda_t\otimes1$
and $(1\otimes f)w=\lambda_t\otimes f$.
\end{proof}

Specialising Corollaries~\ref{cor:cs-crossed} and~\ref{cor:cs-tensor} to the case $B=\C1$ gives Theorem~\ref{thm:scalar-expectation} from the introduction.

%\begin{proposition}\label{prop:conditional-character-proof}
%Let $\Gamma$ be a countable discrete group and let $\sigma$ be a trace preserving action of $\Gamma$ on a tracial von Neumann algebra $(Q,\tau_Q)$ such that $\sigma_g$ is weakly mixing for every $g\neq e$. If
%$\cP\subseteq Q\rtimes_\sigma\Gamma$ is a $\Gamma$-invariant von Neumann subalgebra satisfying
%$\mathbb{E}(\cP)=\C1$, then $\cP\subseteq L(\Gamma)$. The same assertion holds for a subalgebra
%$\cP\subseteq L(\Gamma)\bar\otimes Q$ invariant under the diagonal action and satisfying
%$\mathbb{E}(\cP)=\C1$. In both cases, if $\Gamma$ has the ISR property, then $\cP=L(N)$ for some
%$N\trianglelefteq\Gamma$. 
%\end{proposition}
\begin{proof}[Proof of Theorem~\ref{thm:scalar-expectation}]
Since $\mathbb{E}(\cP)=\C1\subseteq\cP$, the algebra $\cP$ forms a commuting square with $Q$
(respectively, with $\mathbb{C}1\bar{\otimes}Q$), and the intersection is $\C1$. Weak mixing relative to $\C1$
is weak mixing, so Corollary~\ref{cor:cs-crossed} (respectively, Corollary~\ref{cor:cs-tensor})
gives $\cP\subseteq\C1\rtimes\Gamma=L(\Gamma)$ (respectively, $\cP\subseteq L(\Gamma)\bar\otimes\C1$).
In either case $\cP$ is a $\Gamma$-invariant von Neumann subalgebra of $L(\Gamma)$, and the last
assertion is the definition of the ISR property.
\end{proof}

\begin{comment}

\begin{remark}\label{rem:cs-improves}
Corollary~\ref{cor:cs-crossed} contains Proposition~\ref{prop:conditional-character-proof} as the
case $B=\C1$. When $\mathbb{E}(\cP)\subseteq\cP$, it also gives the conclusion of
Theorem~\ref{thm:gen} below without the ECI property and without assuming that $\sigma_g$ itself is
weakly mixing. No ICC assumption is needed in Proposition~\ref{prop:conditional-character-proof}.
Example~\ref{rem:nonicc-ideals} shows that, for non-ICC groups, an invariant subalgebra with scalar
intersection with $\Linf$ need not form a commuting square with $\Linf$.
\end{remark}

\end{comment}

\section{The crossed product case}\label{sec:crossed}

In this section we prove Theorem~\ref{thm:main}. We first record the intermediate-algebra argument for trace-preserving actions.

\begin{lem}\label{lem:groupoid-to-normal}
Let $\sigma$ be a trace-preserving action of a countable group $\Gamma$ on a tracial
von Neumann algebra $(B,\tau_B)$. Suppose that $B'\cap(B\rtimes\Gamma)=Z(B)$ and that
$\sigma_s$ is ergodic for every $s\neq e$. If $B\subseteq\cP\subseteq B\rtimes\Gamma$
is a $\Gamma$-invariant intermediate von Neumann algebra, then $\cP=B\rtimes N$ for the
normal subgroup $N=\{s\in\Gamma\mid u_s\in\cP\}$.
\end{lem}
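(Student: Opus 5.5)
We will show that every Fourier coefficient of an element of $\cP$ is supported on $N$. Since $B\subseteq\cP$, this suffices: for $s\in N$ we have $u_s\in\cP$, so $x=\sum_s a_su_s$ with $a_s=0$ off $N$ lies in $B\rtimes N$. The reverse inclusion $B\rtimes N\subseteq\cP$ is clear. Normality of $N$ follows from $\Gamma$-invariance, since $u_gu_su_g^*=u_{gsg^{-1}}\in\cP$, and $N$ is a subgroup because $\cP$ is an algebra. So the real content is: if $x=\sum_s a_su_s\in\cP$ with $a_s\in B$ and $a_s\neq0$, then $u_s\in\cP$.

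Fix $s$ with $a_s\neq 0$. The first step is to localise at $s$. Because $\cP$ forms a commuting square with $B\rtimes\Gamma$'s coefficient algebra in the trivial way, $\mathbb{E}_\cP$ is $B$-bimodular and commutes with $\mathrm{Ad}(u_g)$. The element $y:=\mathbb{E}_\cP(u_s)\in\cP$ then satisfies $by=\mathbb{E}_\cP(bu_s)=\mathbb{E}_\cP(u_s\sigma_{s^{-1}}(b))=y\,\sigma_{s^{-1}}(b)$ for all $b\in B$. Writing $y=\sum_t c_tu_t$, this gives $bc_t=c_t\sigma_t\sigma_{s^{-1}}(b)$. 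Hence $c_tu_{ts^{-1}}\cdot$ intertwines in the sense that $z_t:=c_t u_{ts^{-1}}$ satisfies $bz_t=z_t b$ after conjugation, that is, $\sum_t c_tu_{ts^{-1}}=yu_s^*$ commutes with $B$. By the hypothesis $B'\cap(B\rtimes\Gamma)=Z(B)$, we get $yu_s^*=c\in Z(B)$, so $\mathbb{E}_\cP(u_s)=cu_s$ with $c\in Z(B)$, and moreover $c=\mathbb{E}(\mathbb{E}_\cP(u_s)u_s^*)$.

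Next we identify $c$ as a projection that is nonzero, and then show $c=1$. Since $x\in\cP$, we have $\tau(a_s^*c)\cdot$-type pairings $\langle x, bu_s\rangle=\langle x,\mathbb{E}_\cP(bu_s)\rangle=\langle x, bcu_s\rangle$ for all $b\in B$. This gives $a_s=a_s c$ after taking $b$ arbitrary, up to the support of the central element, so $c\neq0$ because $a_s\neq0$. Idempotence of $\mathbb{E}_\cP$ gives $\mathbb{E}_\cP(cu_s)=c\,\mathbb{E}_\cP(u_s)=c^2u_s$, and positivity considerations ($\mathbb{E}_\cP$ is the orthogonal projection and $\langle \mathbb{E}_\cP(u_s),u_s\rangle=\|\mathbb{E}_\cP(u_s)\|_2^2$) give $c^2=c=c^*$, so $c=p$ is a nonzero central projection of $B$. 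Then $pu_s\in\cP$, and by invariance $u_g p u_s u_g^*=\sigma_g(p)u_{gsg^{-1}}\in\cP$. Also $(pu_s)^*(pu_s)=\sigma_{s^{-1}}(p)\in\cP\cap B$, which is consistent.

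The main obstacle is upgrading $p$ to $1$, and this is where ergodicity of $\sigma_s$ enters. Set $q=1-p$. I would show that $\mathbb{E}_\cP(u_s)=pu_s$ forces $p$ to be $\sigma_s$-invariant. Using the intertwining relation above with the full form $\mathbb{E}_\cP(bu_s)=bpu_s$ and the identity $\mathbb{E}_\cP((pu_s)^*)=\mathbb{E}_\cP(u_s^*)p$, comparing the two expressions $u_s^*p=\sigma_{s^{-1}}(p)u_s^*$ should yield $\sigma_{s^{-1}}(p)=p$. Indeed, $\mathbb{E}_\cP(u_s^*)=\mathbb{E}_\cP(u_s)^*=u_s^*p=\sigma_{s^{-1}}(p)u_s^*$. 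On the other hand, the same localisation argument applied to $s^{-1}$, together with $B$-bimodularity, gives $\mathbb{E}_\cP(u_s^*)=p'u_s^*$ where $p'$ is determined via $\mathbb{E}_\cP(u_s^*)u_s\in Z(B)$. Combining this with $pu_s\cdot(pu_s)^*=p\in\cP$ and the projection computation should give $p=\sigma_s(p)$. If this direct comparison does not close, I would instead use $\mathbb{E}_\cP(u_s)=\mathbb{E}_\cP(u_su_s^*u_s)$ together with $u_s p u_s^*=\sigma_s(p)$ to derive the invariance. Once $p$ is a nonzero $\sigma_s$-invariant projection, ergodicity of $\sigma_s$ for $s\neq e$ (the case $s=e$ being trivial) gives $p=1$. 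Hence $\mathbb{E}_\cP(u_s)=u_s$, that is, $u_s\in\cP$ and $s\in N$, completing the proof.
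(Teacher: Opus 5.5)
Your localisation step is correct and matches the paper's. Showing that $\mathbb{E}_{\cP}(u_s)u_s^*$ commutes with $B$, and hence lies in $Z(B)$, is the paper's $v_s=u_s^*\mathbb{E}_{\cP}(u_s)\in Z(B)$ with $u_s$ on the other side. The genuine gap is the step you yourself call the main obstacle, namely $\sigma_s(p)=p$. Neither of your suggested routes closes it.

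\begin{itemize}
\item Your second route is vacuous as written, since $u_su_s^*u_s=u_s$.
\item Your first route uses only $*$-preservation and $B$-bimodularity of $\mathbb{E}_{\cP}$, the localisation at $s^{-1}$, and $pu_s(pu_s)^*=p\in B$. The localisation at $s^{-1}$ just returns $p'=\sigma_{s^{-1}}(p)$, which is no new information.
\end{itemize}

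None of these ingredients involves the $\Gamma$-invariance of $\cP$, and without invariance the claim is false. For example, let $\Gamma=\mathbb{Z}$ act on $B=L^\infty(X,\nu)$ by an irrational rotation of the circle, let $s=1$, and let $\cP=W^*(B,1_Eu_s)$ with $0<\nu(E)<1$. All other hypotheses of the lemma hold. Every element of $\cP$ has its $s$-th Fourier coefficient supported in $E$: a word of total degree one in $1_Eu_s$ and $u_s^*1_E$ must contain a letter passing between levels $0$ and $1$, and that letter contributes the factor $1_E$. Hence $\mathbb{E}_{\cP}(u_s)=1_Eu_s$, with $1_E$ not $\sigma_s$-invariant.

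The missing idea is to use invariance under $\operatorname{Ad}(u_s)$ for this same $s$, applied to $u_s$ itself. You noted at the start that $\mathbb{E}_{\cP}$ commutes with $\operatorname{Ad}(u_g)$ but never used it. Since $\operatorname{Ad}(u_s)$ fixes $u_s$, this gives
\[
pu_s=\mathbb{E}_{\cP}(u_s)=u_s\,\mathbb{E}_{\cP}(u_s)\,u_s^*=u_sp=\sigma_s(p)u_s ,
\]
so $\sigma_s(p)=p$, and ergodicity of $\sigma_s$ finishes the argument. This is exactly how the paper proves $\sigma_s(v_s)=v_s$.

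The paper then applies ergodicity immediately to get $v_s=c_s1$, and idempotence gives $c_s\in\{0,1\}$. So your detour through $a_s\neq0$ and the projection property of $c$ is unnecessary. When $c_s=0$, bimodularity gives $\mathbb{E}_{\cP}(bu_s)=0$ for all $b\in B$, which kills the $s$-th Fourier coefficient of every element of $\cP$.
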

\begin{proof}

Recall that the canonical conditional expectation $\mathbb{E}_{\cP}$ is $\Gamma$-equivariant, that is,
$\mathbb{E}_{\cP}(u_gxu_g^*)=u_g\mathbb{E}_{\cP}(x)u_g^*$ for all $g\in\Gamma$ and $x\in B\rtimes\Gamma$.

Fix $e\neq s\in \Gamma$ and let $v_s=u_s^*\mathbb{E}_{\cP}(u_s)$. We claim that
$v_s\in\cP'\cap(B\rtimes \Gamma)$. Indeed, for $m\in\cP$ let
$\tilde{m}=u_s^*mu_s\in\cP$. Using $\cP$-bimodularity of $\mathbb{E}_{\cP}$ twice, we obtain
\[
\mathbb{E}_{\cP}(u_s)\tilde{m}=\mathbb{E}_{\cP}(u_s\tilde{m})=\mathbb{E}_{\cP}(mu_s)=m\mathbb{E}_{\cP}(u_s),
\]
whence $v_s\tilde{m}=u_s^*m\mathbb{E}_{\cP}(u_s)=\tilde{m}v_s$. As $m\mapsto \tilde{m}$ is a bijection of $\cP$, the
claim follows. Since $B\subseteq\cP$, our assumption gives
$v_s\in\cP'\cap(B\rtimes\Gamma)\subseteq B'\cap(B\rtimes\Gamma)=Z(B)$.

%Next, $v_s$ is $\sigma_s$-invariant. 
By equivariance of $\mathbb{E}_{\cP}$ we have
$\mathbb{E}_{\cP}(u_s)=u_s\mathbb{E}_{\cP}(u_s)u_s^*$, that is,
$u_s^*\mathbb{E}_{\cP}(u_s)=\mathbb{E}_{\cP}(u_s)u_s^*$, and therefore $\sigma_s(v_s)=u_sv_su_s^*=\mathbb{E}_{\cP}(u_s)u_s^*=v_s$.
Since $\sigma_s$ is ergodic on $B$, we get $v_s=c_s1$ for some $c_s\in\C$, that is,
$\mathbb{E}_{\cP}(u_s)=c_su_s$. Applying $\mathbb{E}_{\cP}$ once more and using idempotence yields
$c_s^2=c_s$, so $c_s\in\{0,1\}$. Thus for every $s\in \Gamma$, either $u_s\in\cP$ or
$\mathbb{E}_{\cP}(u_s)=0$.

Let $N=\{s\in \Gamma\mid u_s\in\cP\}$. Then $N$ is a subgroup, and it is normal because
$u_gu_su_g^*=u_{gsg^{-1}}$ for all $s, g \in \Gamma$ and $\cP$ is $\Gamma$-invariant. Clearly
$B\rtimes N\subseteq\cP$. Conversely, let $x\in\cP$ and $s\in \Gamma \setminus  N$. For every $b\in B$,
bimodularity gives $\mathbb{E}_{\cP}(bu_s)=b\mathbb{E}_{\cP}(u_s)=0$, hence
$\langle x,bu_s\rangle=\langle x,\mathbb{E}_{\cP}(bu_s)\rangle=0$.
Thus the $s$-th Fourier coefficient of $x$ vanishes. Hence $x\in B\rtimes N$, and
$\cP=B\rtimes N$.
\end{proof}

\begin{thm}\label{thm:tracial-crossed-classification}
Let $\sigma$ be a trace-preserving action of a countable discrete group $\Gamma$ on
$(Q,\tau_Q)$, let $\cP\subseteq Q\rtimes\Gamma$ be a $\Gamma$-invariant von Neumann
subalgebra, and let $B=\cP\cap Q$. Suppose that $B\subseteq Q$ is centrally
$\Gamma$-free and that $\sigma$ is pointwise weakly mixing on $Q$ and pointwise
weakly mixing relative to $B$. Then $\cP=B\rtimes N$ for the normal subgroup
$N=\{s\in\Gamma\mid u_s\in\cP\}$.
\end{thm}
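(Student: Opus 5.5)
The plan is to combine three earlier results: the central-freeness Lemma~\ref{lem:central-free-expectation}, the commuting-square containment Corollary~\ref{cor:cs-crossed}, and the intermediate-algebra Lemma~\ref{lem:groupoid-to-normal}, in that order.

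\emph{Step 1 (commuting square).} Let $x\in\cP$. Since $B=\cP\cap Q\subseteq\cP$, we have $W^*(B,x)\subseteq\cP$. Because $B\subseteq Q$ is centrally $\Gamma$-free, Lemma~\ref{lem:central-free-expectation} gives $\mathbb{E}(x)\in W^*(B,x)\subseteq\cP$. Hence $\mathbb{E}(\cP)\subseteq\cP$, so $\cP$ and $Q$ form a commuting square. By Remark~\ref{rem:cs-crossed}, $\mathbb{E}(\cP)=\cP\cap Q=B$, and $B$ is $\Gamma$-invariant.

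\emph{Step 2 (containment).} Since $\sigma$ is pointwise weakly mixing relative to $B$, Corollary~\ref{cor:cs-crossed} gives $\cP\subseteq B\rtimes\Gamma$. We therefore have the $\Gamma$-invariant intermediate inclusion $B\subseteq\cP\subseteq B\rtimes\Gamma$.

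\emph{Step 3 (hypotheses of Lemma~\ref{lem:groupoid-to-normal} for $\Gamma\curvearrowright B$).}
\begin{itemize}
\item The relative commutant condition $B'\cap(B\rtimes\Gamma)=Z(B)$ is the second assertion of Lemma~\ref{lem:central-free-expectation}.
\item For ergodicity of each $\sigma_s|_B$ with $s\neq e$, use that $\sigma_s$ is weakly mixing on $Q$. Suppose $b\in B$ is $\sigma_s$-fixed and put $\xi=b-\tau(b)1$, so $\tau(\xi)=0$ and $\sigma_s^n(\xi)=\xi$ for all $n$. Weak mixing with $\zeta=\xi$ gives $\frac1m\sum_{n<m}|\tau(\xi^*\sigma_s^n(\xi))|^2=\|\xi\|_2^4\to0$, hence $\xi=0$.
\item The trivial-$B$ case needs no separate treatment. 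If $B=\C1$, then $B'\cap L(\Gamma)=\C1$ would require $\Gamma$ to be ICC, which need not hold. However, we only need to invoke the lemma in a form that does not require this. Alternatively, when $B=\C1$, central $\Gamma$-freeness of $\C1\subseteq Q$ already forces $\Gamma$ to be trivial, since the projections in $\C1^\omega=\C1$ are $0$ and $1$. So the hypothesis is consistent and the lemma applies as stated.
\end{itemize}

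\emph{Conclusion.} Lemma~\ref{lem:groupoid-to-normal} now yields $\cP=B\rtimes N$ with $N=\{s\in\Gamma\mid u_s\in\cP\}$ normal.

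The main obstacle is Step 1, getting the commuting square from central freeness. It is supplied entirely by Lemma~\ref{lem:central-free-expectation}, so the remaining work is bookkeeping. The one point to check carefully is that $W^*(B,x)\subseteq\cP$, which needs $B\subseteq\cP$; this holds by the definition $B=\cP\cap Q$.
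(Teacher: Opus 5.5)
Your proposal is correct and follows the paper's proof exactly. Lemma~\ref{lem:central-free-expectation} gives $\mathbb{E}(\cP)\subseteq\cP$ and $B'\cap(B\rtimes\Gamma)=Z(B)$, Corollary~\ref{cor:cs-crossed} gives $B\subseteq\cP\subseteq B\rtimes\Gamma$, weak mixing gives ergodicity on $B$, and Lemma~\ref{lem:groupoid-to-normal} finishes. Your third bullet is unnecessary, since the relative commutant identity from Lemma~\ref{lem:central-free-expectation} already covers $B=\C1$, and its opening remark is off: $\C1'\cap L(\Gamma)=L(\Gamma)$, so the condition would force $\Gamma$ to be trivial, not ICC. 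Your alternative argument there does reach the correct conclusion.
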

\begin{proof}
By Lemma~\ref{lem:central-free-expectation}, $\mathbb{E}(\cP)\subseteq\cP$ and
$B'\cap(B\rtimes\Gamma)=Z(B)$. Corollary~\ref{cor:cs-crossed} gives
$B\subseteq\cP\subseteq B\rtimes\Gamma$. Since each $\sigma_s$, $s\neq e$, is
weakly mixing on $Q$, its restriction to $B$ is ergodic. Apply
Lemma~\ref{lem:groupoid-to-normal}.
\end{proof}

For an essentially free p.m.p.~action on $(Y,\eta)$, the algebra
$B=L^\infty(Y,\eta)$ is maximal abelian in $B\rtimes\Gamma$, by
\cite[Section~1.4.5(i)]{AnantharamanDelarochePopaII1}. Thus, the relative commutant
assumption in Lemma~\ref{lem:groupoid-to-normal} holds. Under the hypotheses of
Theorem~\ref{thm:main}, the ergodicity assumption also holds, since weak mixing
passes to factors and implies ergodicity.

\begin{proof}[Proof of Theorem~\ref{thm:main}]
Weak mixing relative to $Q$ itself holds trivially, since
$\mathbb{E}_Q(\xi)=0$ forces $\xi=0$. Hence condition~(a) implies that
$\sigma$ is pointwise weakly mixing relative to every $\Gamma$-factor of
$(Q,\tau_Q)$, including $\C1$ and $Q$.
We prove the two assertions separately.

\noindent
\textit{Case (ii).} If $\mathbb{E}(\cP)=\C1$, then Theorem~\ref{thm:scalar-expectation}
gives $\cP\subseteq L(\Gamma)$ and, when $\Gamma$ has the ISR property, $\cP=L(N)$ for a normal
subgroup $N\trianglelefteq\Gamma$.

\noindent
\textit{Case (i).} Let $B=\cP\cap Q$ and suppose that $B\neq\C1$. Since
$\cP$ and $Q$ are $\Gamma$-invariant, $B$ is a non-trivial $\Gamma$-factor of
$(Q,\tau_Q)$, so the inclusion $B\subseteq Q$ is centrally $\Gamma$-free by
condition~(b). By the first paragraph, $\sigma$ is pointwise weakly mixing on
$Q$ and pointwise weakly mixing relative to $B$.
Theorem~\ref{thm:tracial-crossed-classification} therefore yields
$\cP=B\rtimes N$ for the normal subgroup $N=\{s\in\Gamma\mid u_s\in\cP\}$.
\end{proof}

\begin{remark}\label{rem:open-case-cs}
Assume condition~(b) in Theorem~\ref{thm:main}. By
Lemma~\ref{lem:central-free-expectation} and Remark~\ref{rem:cs-crossed}, $\cP$ forms a commuting square with $Q$ whenever
$\cP\cap Q\neq\C1$, and the same holds trivially when $\mathbb{E}(\cP)=\C1$. Conversely, if $\cP$
forms a commuting square with $Q$, then $\mathbb{E}(\cP)=\cP\cap Q$ by
Remark~\ref{rem:cs-crossed}, so either $\cP\cap Q\neq\C1$ or $\mathbb{E}(\cP)=\C1$. Hence the case outside the subalgebra alternatives in parts~(i) and~(ii) of
Theorem~\ref{thm:main} is exactly the case in which $\cP$ and $Q$ do not form a commuting square. 
\end{remark}

We shall now discuss some cases where the conditions above do not hold. We begin with the non-ICC case. The ideal construction in
\cite[Theorem~5.12]{amrutam2024nonabelian} motivates the following example.

\begin{example}\label{rem:nonicc-ideals}\upshape
Let $\Gamma$ be a non-trivial countable group that is not ICC, and let
$\Gamma\curvearrowright(X,\nu)$ be a p.m.p.~action on a nonatomic
probability space. Then $\Linf\rtimes\Gamma$ admits a $\Gamma$-invariant von Neumann subalgebra that contains
$L(\Gamma)$ and is not of the form $B\rtimes\Lambda$ in the canonical embedding.

Indeed, let $p \in Z(L(\Gamma))$ be a non-trivial projection. 
%the sum of the group unitaries over a finite nonidentity
%conjugacy class is a nonscalar element of $Z(L(\Gamma))$.
%Taking a nonscalar self-adjoint part and a spectral projection gives
%$p\in Z(L(\Gamma))$ with $0<p<1$. 
Set $\cP=p(\Linf\rtimes\Gamma)p\oplus p^{\perp}L(\Gamma)$. Then % two summands have orthogonal units
%$p$ and $q$, so 
$\cP$ is a $\Gamma$-invariant von Neumann subalgebra which contains $L(\Gamma)$.% and is , since every group
%unitary commutes with $p$ and $q$.

We claim that $\cP\cap \Linf=\C1$. If $a\in\cP\cap \Linf$, then $a$
commutes with $p^{\perp}$ and $p^{\perp}ap^{\perp}$ belongs to $p^{\perp}L(\Gamma)$. Bimodularity of the
trace-preserving expectation onto $L(\Gamma)$ gives
$p^{\perp}ap^{\perp}=\mathbb{E}_{L(\Gamma)}(p^{\perp}ap^{\perp})=\nu(a)p^{\perp}$.
Thus, $(a-\nu(a)1)p^{\perp}=0$. Applying $\mathbb{E}$ to
$(a-\nu(a)1)p^{\perp}(a-\nu(a)1)^*$ gives
$\tau(p^{\perp})|a-\nu(a)1|^2=0$, proving the claim.

On the other hand, $\mathbb{E}(\cP)\ne\C1$. Indeed, write
$p=\sum_{g\in\Gamma}p_g u_g$ and choose a measurable set $D$ with
$0<\nu(D)<\tau(p)$. Then
\[
 \mathbb{E}(p1_Dp)=\sum_{g\in\Gamma}|p_g|^2\sigma_g(1_D)
 \geq |p_e|^2\sigma_e(1_D)=\tau(p)^2 1_D.
\]
To justify the first identity, let $p_F$ be the finite Fourier sums of $p$.
They converge to $p$ in $L^2(\Linf\rtimes\Gamma)$, so $p_F1_Dp_F^*$ converges to
$p1_Dp$ in $L^1(\Linf\rtimes\Gamma)$ by the Cauchy--Schwarz inequality. Apply the $L^1$-contractive
expectation and note that the series on the right converges in
operator norm, since $\sum_{g\in \Gamma}|p_g|^2=\tau(p)$.
If this expectation were scalar, trace preservation would imply that it equals
$\tau(p)\nu(D)1$, contrary to the lower bound on $D$.
Finally, an equality $\cP=B\rtimes\Lambda$ in the canonical
embedding would force $\Lambda=\Gamma$, since $L(\Gamma)\subseteq\cP$,
and $B=\C1$, since $\cP\cap \Linf=\C1$. This contradicts
$\mathbb{E}(\cP)\ne\C1$.
\begin{comment}
Note already here The tensor product has the same obstruction. Let $\mathcal Q$ be the direct sum
of $pL(\Gamma)\bar\otimes\Linf$ and $qL(\Gamma)\bar\otimes\C1$.
It is invariant under the diagonal action and contains
$L(\Gamma)\bar{\otimes}\mathbb{C}1$. Its intersection with $\mathbb{C}1\bar{\otimes} \Linf$ is scalar. If $1\otimes f\in\mathcal Q$, then
$q\otimes f\in qL(\Gamma)\bar\otimes\C1$, and applying the normalized
trace on $qL(\Gamma)$ to the first tensor factor shows that $f$ is
scalar. Moreover $\mathbb{E}(p\otimes f)=\tau(p)f$ for every $f\in \Linf$, so
$\mathbb{E}(\mathcal Q)=\Linf$. As above, these facts rule out an equality
$\mathcal Q=L(\Lambda)\bar\otimes B$ in the canonical embedding.
\end{comment}
\end{example}

The following example shows that the relative weak mixing hypothesis in
Theorem~\ref{thm:main} cannot be removed.
\begin{example}
\label{ex:relative-mixing-needed}
Let
$\Gamma=\F_2$ be the free group on two generators, denoted $a$ and $b$.
Let $X=\{\pm1\}^{\Gamma}$ be equipped with the standard symmetric product measure $\nu$ and let $\Gamma$ act on $(X,\nu)$ by shifts.
 Write
$r_h(x)=x_h$ for $h\in\Gamma$, $x \in X$, so that $r_h \in \Linf$.
Then $\sigma_s(r_h)=r_{sh}$ for $s,h\in\Gamma$. For $h\in\Gamma$, let
$c_h=hah^{-1}$ and $q_h=r_hu_{c_h}\in\Linf\rtimes\Gamma$.
Let $\cP=W^*(q_h\mid h\in\Gamma)\subseteq\Linf\rtimes\Gamma$. Then $\cP$ is $\Gamma$-invariant. Indeed, for $s,h \in \Gamma$ we have
\[
u_s q_h u_s^*
=
u_s r_h u_{c_h}u_s^*
=
\sigma_s(r_h)u_{s c_hs^{-1}}
=
r_{sh}u_{(sh)a(sh)^{-1}}
=
q_{sh}.
\]
Let
$B=\cP\cap \Linf$. Then $B\neq\mathbb C1$. Indeed, for every $h\in\Gamma$, $c_h=c_{ha}$, so
$r_hr_{ha}=q_hq_{ha}^*\in B$.
This function is nonscalar.
We now show that $\cP$ is not of the form $B\rtimes N$. Suppose, to the contrary, that
$\cP=B\rtimes N$
for some normal subgroup $N\trianglelefteq\Gamma$. Since
$q_e=r_eu_a\in\cP$
has Fourier support at $a$, we must have $a\in N$. Hence
$u_a\in B\rtimes N=\cP$.
Therefore $r_e=q_eu_a^*\in\cP\cap\Linf=B$.
We shall see that this is impossible, using the identification
$\Linf\rtimes\Gamma\cong L((\bigoplus_{\Gamma}\mathbb Z/2\mathbb Z)\rtimes\Gamma)$.
Under this identification, for every $h\in\Gamma$, $r_hu_{c_h}$ is the canonical unitary associated with the group element
$(\delta_h,c_h)$, where $\delta_h\in\bigoplus_\Gamma\mathbb Z/2\mathbb Z$ has value $1$ at $h$ and $0$ elsewhere.
Let $K=\langle(\delta_h,c_h)\mid h\in\Gamma\rangle$, a subgroup of
$(\bigoplus_{\Gamma}\mathbb Z/2\mathbb Z)\rtimes\Gamma$. Then
$\cP=L(K)$,
and, since $B=\cP\cap\Linf$, this identification gives $B=L\left(K\cap\bigoplus_{\Gamma}\mathbb Z/2\mathbb Z\right)$. Define a homomorphism $\chi$ from
$(\bigoplus_{\Gamma}\mathbb Z/2\mathbb Z)\rtimes\Gamma$ to $\mathbb Z/2\mathbb Z$
by $\chi(\eta,g)=|\eta|+\operatorname{exp}_a(g)\pmod2$ for
$\eta\in\bigoplus_{\Gamma}\mathbb Z/2\mathbb Z$ and $g\in\Gamma$. Here $|\eta|$ denotes the number of nonzero coordinates of $\eta$, and $\operatorname{exp}_a(g)$ is the $a$-exponent sum of $g$, namely the image of $g$ under the surjective homomorphism $\Gamma\twoheadrightarrow\mathbb{Z}$ sending $a$ to $1$ and $b$ to $0$. For every generator $(\delta_h,c_h)$ of $K$, we have
$|\delta_h|=1$, and
$\operatorname{exp}_a(c_h)=1$,
and hence $\chi(\delta_h,c_h)=0$.
Thus,
$K\subseteq\ker\chi$. But
$\chi(\delta_e,e)=1$, so 
$(\delta_e,e)\notin K$.
Equivalently,
$r_e\notin B$.
This contradicts the conclusion above. Hence $\cP\neq B\rtimes N$ for every normal subgroup $N\trianglelefteq\Gamma$.

Finally, we show explicitly that relative weak mixing over $B$ fails. Since
$c_e=c_{a^n}=a$
for every $n\ge1$, we have $q_e q_{a^n}^*
=
r_e r_{a^n}\in B$.
Moreover, from the description of $B$ above,
$\mathbb{E}_B(r_e)=0$. But
$\sigma_{a^n}(r_e)=r_{a^n}$. Therefore,
$\mathbb{E}_B\bigl(r_e^*\sigma_{a^n}(r_e)\bigr) = \mathbb{E}_B(r_e r_{a^n}) = r_e r_{a^n}.$
These functions have $L^2$ norm $1$ for every $n\ge1$, so $\sigma_a$ is not weakly mixing relative to $B$.
\end{example}

\begin{remark}\label{rem:bernoulli-scope}
Suppose that a compact second countable group $K$ acts non-trivially on $(X,\nu)$ by transformations $\kappa_k$, $k\in K$, which preserve $\nu$.
Assume that this action commutes with the p.m.p.\ action of $\Gamma$ and that its Koopman representation is strongly continuous. Then
$B=L^\infty(X,\nu)^K$ is a proper $\Gamma$-factor. If $\sigma_g$ is ergodic for a fixed $g\in\Gamma$, then $\sigma_g$ cannot be weakly mixing relative to $B$.

To see this, let $\pi$ denote the factor map from $X$ to $Y$ representing $B$, and let
$\lambda=\nu\times_Y\nu$ (see Remark \ref{remark: relative weak mixing for diagonal action wrt a coordinate factor} for the notation used). For $k\in K$, let
$\lambda_k=(\mathrm{id},\kappa_k)_*\nu$. Haar averaging over $K$ yields the conditional
expectation onto $B$, and hence, for $f,h\in L^\infty(X,\nu)$,
\[
 \int_K\int_{X\times X} f(x)h(x')d\lambda_k(x,x')dk
 =\int_X f \mathbb{E}_B(h)d\nu
 =\int_{X\times X}f(x)h(x')d\lambda(x,x').
\]
Thus, $\lambda=\int_K\lambda_kdk$. Every $\lambda_k$ is invariant under
$\sigma_g\times\sigma_g$ ($k\in K$, $g \in \Gamma$), since $\kappa_k$ commutes with $\sigma_g$.
If $\sigma_g$ were weakly mixing relative to $B$, then $\lambda$ would be
an ergodic invariant probability measure, by
Remark~\ref{remark: relative weak mixing for diagonal action wrt a coordinate factor}.
Ergodic invariant probability measures are extreme among invariant
probability measures, so this decomposition gives $\lambda_k=\lambda$
for Haar almost every $k\in K$. For each $f\in L^{\infty}(X,\nu)$, the function
$k\mapsto\int_X f(x)\overline{f(\kappa_kx)}d\nu(x)$ is continuous by strong continuity of the Koopman representation. Its
almost everywhere constant value must therefore also be its value at
the identity. It follows that
\[
 \|f\|_2^2
 =\int_{X\times X}f(x)\overline{f(x')}d\lambda(x,x')
 =\|\mathbb{E}_B(f)\|_2^2.
\]
As $\mathbb{E}_B$ acts as an orthogonal projection on $L^2(X,\nu)$, this forces every
bounded $f$ to belong to $B$, contradicting properness.

In particular, this obstruction applies to Bernoulli shifts whose base admits a
non-trivial compact group of transformations preserving the base measure, including
the symmetric two-point base and, more generally, a compact group equipped with Haar measure.
%Essentially free, prime, totally weakly mixing actions satisfy the hypotheses of
%Theorem~\ref{thm:main}, since their only proper factor is trivial.
\end{remark}

\subsection{Beyond the commuting square}\label{subsec:beyond}
When $\cP$ and $Q$ do not form a commuting square, the ECI property allows us to
control the Fourier coefficients of elements of $\cP$ by the von Neumann algebra
generated by $\mathbb{E}(\cP)$.

\begin{thm}\label{thm:gen}
Let $\Gamma$ be a countable discrete group with the ECI property, and let $\sigma$
be a trace-preserving action of $\Gamma$ on $(Q,\tau_Q)$. Let
$\cP\subseteq Q\rtimes\Gamma$ be a $\Gamma$-invariant von Neumann subalgebra, and let
$B=\mathbb{E}(\cP)''$. If $\sigma$ is pointwise weakly mixing relative to $B$, then
$\cP\subseteq B\rtimes\Gamma$.
\end{thm}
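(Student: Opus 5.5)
The plan is to show directly that every Fourier coefficient of every $x\in\cP$ lies in $B$, using the fact that $\mathbb{E}(y^*x)\in\mathbb{E}(\cP)\subseteq B$ for all $x,y\in\cP$. I will pair $x$ with conjugates of itself by high powers of a fixed $g\ne e$, and use the ECI property to kill all Fourier terms outside $C_\Gamma(g)$.

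\textbf{Step 1: the basic identity.} Fix $x=\sum_t f_tu_t\in\cP$ and $g\ne e$. For $n\in\N$, $y_n=u_{g^n}xu_{g^n}^*=\sum_s\sigma_{g^n}(f_s)u_{g^nsg^{-n}}$ also lies in $\cP$. Hence, for every $z\in\cP$,
\[
\mathbb{E}(y_n^*z)=\sum_{s}\sigma_{(g^nsg^{-n})^{-1}}\bigl(\sigma_{g^n}(f_s)^*\,z_{g^nsg^{-n}}\bigr)\in B,
\]
where $z_t$ denotes the Fourier coefficients of $z$. I will take $z=x$, and more generally $z=bx$ or $z=xb'$ with $b,b'\in\mathbb{E}(\cP)$. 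Such $z$ need not lie in $\cP$, so I will instead test the identity against $B$-valued inner products.

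\textbf{Step 2: truncation and ECI.} Fix $\varepsilon>0$ and a finite set $R\Subset\Gamma$ with $\|x-x_R\|_2<\varepsilon$. Using \eqref{eq:basic}, $\mathbb{E}(y_n^*x)$ differs in $\|\cdot\|_1$ by at most $2\|x\|_2\varepsilon$ from the same expression with both $x$ and $y_n$ truncated to $R$. By ECI applied to $F=R\setminus C_\Gamma(g)$, for all large $n$ the set $g^nFg^{-n}$ is disjoint from $R$. So only indices $s\in R\cap C_\Gamma(g)$ survive, for which $g^nsg^{-n}=s$. This leaves
\[
S_n:=\sum_{s\in R\cap C_\Gamma(g)}\sigma_{s^{-1}}\bigl(\sigma_{g}^n(f_s)^*f_s\bigr)\qquad\text{with}\qquad \operatorname{dist}_{\|\cdot\|_1}(S_n,B)\lesssim\varepsilon .
\]
Note that $\sigma_{s^{-1}}$ commutes with $\sigma_g^n$ for $s\in C_\Gamma(g)$.

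\textbf{Step 3: using relative weak mixing.} Split $f_s=b_s+\xi_s$ with $b_s=\mathbb{E}_B(f_s)$ and $\xi_s=f_s-b_s$. I will apply $\mathrm{id}-\mathbb{E}_B$ to $S_n$ and pair the result with $\sigma_{s^{-1}}(\sigma_g^n(b_s)^*\xi_s)$-type test vectors. Averaging along the sequence $(m_k)$ given by Lemma~\ref{lem:relative-pairwise-wm-selection} for the finite set $\Xi=\{\xi_s,\xi_s^*\}$ and $\alpha=\sigma_g$, all terms of the form $\mathbb{E}_B(\sigma_g^{m}(\xi_s)^*\xi_{s'})$ become asymptotically negligible. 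This leaves the cross terms, linear in $\xi_s$, which must asymptotically vanish.

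For $s=e$ this is cleaner. Taking $\tau$ of $\mathbb{E}(y_n^*x)$ against suitable elements, and using that $\sigma_g^n(b)$ ranges over $B$, I expect to obtain $\|\xi_e\|_2=0$ via the Stinespring-type Lemma~\ref{lem:stinespring-l2-small}, exactly as in Theorem~\ref{thm:cs-engine}, now with $a_k=\theta_g^{m_k}(x)$. To handle a general coefficient $f_t$, I will replace $x$ by $x\,u_{t}^*$-twisted versions. These are not in $\cP$, but I will instead run the argument with $g=t$ (note $t\in C_\Gamma(t)$), so that the $t$-coefficient sits in the surviving centralizer sum and is isolated by the same averaging.

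\textbf{Main obstacle.} The hardest step is Step 3: isolating an individual coefficient $f_t$ from the centralizer sum $S_n$ and killing the cross terms $\sigma_g^n(b_s)^*\xi_s$. Relative weak mixing controls products of two $B$-orthogonal vectors but not products with $B$-elements. I would first try to remove the $b_s$ by subtracting from $x$ a suitable element built from $\mathbb{E}(\cP)$. If that fails, I would use the Stinespring dilation for $\mathbb{E}_B$ on the sequence $a_k=\theta_g^{m_k}(x)$ to show weak convergence to $0$ of the $B$-orthogonal parts, and then conclude by evaluating against the trace vector.
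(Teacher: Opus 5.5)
There is a genuine gap, and it sits in Step 3. Your Steps 1 and 2 are correct and match the paper's setup. The paper takes $g=t_0$, the index of the coefficient it wants to place in $B$ (as you eventually suggest). It works with $c_n=\mathbb{E}(u_{t_0}^n x u_{t_0}^{-n}x^*)\in B$, which after truncation and ECI is $L^1$-close to $\sum_{s\in F\cap C_\Gamma(t_0)}\sigma_{t_0}^n(f_s)f_s^*$. Your $\mathbb{E}(y_n^*x)$ serves equally well, but keep the specific element $c_n\in B$ with $\|c_n\|\le\|x\|^2$, not merely an $L^1$-distance to $B$.

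Step 3 carries the whole content of the theorem, and neither route you sketch closes it.
\begin{itemize}
\item Pairing $(\mathrm{id}-\mathbb{E}_B)S_n$ with vectors of the form $\sigma_g^n(b_s)^*\xi_s$ gives no information when $b_s=\mathbb{E}_B(f_s)=0$. That is precisely the case of a coefficient orthogonal to $B$.
\item When the $b_s$ are nonzero, the Ces\`aro average of that pairing only controls the combination $\sum_s\sigma_g^n(b_s)^*\xi_s$. It does not control each $\xi_s$ separately for the several $s\in R\cap C_\Gamma(g)$.
\item The Stinespring argument of Theorem~\ref{thm:cs-engine} does not transfer. It needs $a_ka_\ell^*\in A$ and $\mathbb{E}_{\cP}|_A=\mathbb{E}_B$, i.e.\ the commuting square, which is exactly what is missing here.
\end{itemize}
The case $s=e$ needs no argument, since $f_e=\mathbb{E}(x)\in B$ by the definition of $B$.

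The missing idea is to test against an auxiliary vector unrelated to $x$.
\begin{itemize}
\item Fix any $\xi\in Q$ with $\mathbb{E}_B(\xi)=0$ and $\|\xi\|\le1$. Apply the $L^1$-contractive functional $a\mapsto\tau_Q(\xi^*\sigma_g^n(\xi)a)$ to the approximate identity $c_n\approx\sum_s\sigma_g^n(h_s)^*h_s$, where in your normalisation $h_s=\sigma_{s^{-1}}(f_s)$.
\item On the $B$-side, $|\tau_Q(\xi^*\sigma_g^n(\xi)c_n)|\le\|\mathbb{E}_B(\xi^*\sigma_g^n(\xi))\|_2\|c_n\|_2$. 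Its Ces\`aro averages tend to $0$ by relative weak mixing.
\item On the other side, traciality rewrites each term as $\tau_Q\bigl((\xi h_s^*)^*\sigma_g^n(\xi h_s^*)\bigr)$. This is a positive-definite sequence in $n$. By the mean ergodic theorem its Ces\`aro limit is the squared norm of the projection of $\xi h_s^*$ onto the $\sigma_g$-invariant vectors, hence at least $|\tau_Q(\xi h_s^*)|^2$.
\item Positivity makes all surviving centralizer terms contribute with the same sign. So nothing needs to be isolated, and no splitting into $B$ and $B^\perp$ parts is needed. One gets $\sum_{s\in R\cap C_\Gamma(g)}|\tau_Q(\xi h_s^*)|^2\le2\varepsilon\|x\|_2$.
\item Take $g=t$ with $t\in R$ and let $\varepsilon\to0$. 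This gives $\langle\xi,h_t\rangle=\tau_Q(\xi h_t^*)=0$ for all such $\xi$. Hence $h_t\in B$, and $f_t=\sigma_t(h_t)\in B$ by $\Gamma$-invariance of $B$.
\end{itemize}
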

\begin{proof}
Equivariance of $\mathbb{E}$ implies that $B$ is $\Gamma$-invariant. Fix $x\in\cP$
with $\|x\|_\infty\le1$ and write $f_g=\mathbb{E}(xu_g^*)$ for $g\in\Gamma$.
We have $f_e\in B$. Fix $t_0\neq e$ and $\epsilon>0$. Choose a finite set
$F\subseteq\Gamma$ containing $e$ and $t_0$ such that $\|x-x_F\|_2<\epsilon$,
where $x_F=\sum_{g\in F}f_gu_g$, and let $F_1=F\cap C_\Gamma(t_0)$.
By ECI, there is $n_0\in\N$ such that
$t_0^n(F\setminus F_1)t_0^{-n}\cap F=\varnothing$ for every $n\ge n_0$.
For such $n$, let $c_n=\mathbb{E}(u_{t_0}^nxu_{t_0}^{-n}x^*)$.
Since $\cP$ is invariant, $c_n\in B$ and $\|c_n\|_\infty\le1$.
Multiplying the finite Fourier sums gives
\[
 \mathbb{E}(u_{t_0}^nx_Fu_{t_0}^{-n}x_F^*)
 =\sum_{g\in F_1}\sigma_{t_0}^n(f_g)f_g^*.
\]
By equation~\eqref{eq:basic}, we have
\begin{equation}\label{eq:L1A}
 \left\|c_n-\sum_{g\in F_1}\sigma_{t_0}^n(f_g)f_g^*\right\|_1
 \le\|x-x_F\|_2\|x\|_2+\|x_F\|_2\|x-x_F\|_2<2\epsilon.
\end{equation}

Let $\xi\in Q$ satisfy $\mathbb{E}_B(\xi)=0$ and $\|\xi\|_\infty\le1$.
Apply the functional $a\mapsto\tau_Q(\xi^*\sigma_{t_0}^n(\xi)a)$, whose norm on
$L^1(Q,\tau_Q)$ is at most one, to equation~\eqref{eq:L1A}.
Since $c_n\in B$ and $\|c_n\|_2\le1$, we have
\[
 |\tau_Q(\xi^*\sigma_{t_0}^n(\xi)c_n)|
 =|\tau_Q(c_n\mathbb{E}_B(\xi^*\sigma_{t_0}^n(\xi)))|
 \le\|\mathbb{E}_B(\xi^*\sigma_{t_0}^n(\xi))\|_2.
\]
Relative weak mixing and the Cauchy--Schwarz inequality show that the Ces\'aro averages of these
absolute values tend to zero. For each term in the finite sum, traciality gives
\[
 \tau_Q(\xi^*\sigma_{t_0}^n(\xi)\sigma_{t_0}^n(f_g)f_g^*)
 =\tau_Q((\xi f_g)^*\sigma_{t_0}^n(\xi f_g)).
\]
By the mean ergodic theorem, the Ces\'aro averages on the right converge to the
squared norm of the orthogonal projection of $\xi f_g$ onto the invariant vectors
in $L^2(Q,\tau_Q)$. This limit is nonnegative and at least
$|\tau_Q(\xi f_g)|^2$, since the invariant subspace contains the constants.
Averaging the scalar inequality obtained from equation~\eqref{eq:L1A} and passing
to the limit therefore gives
$\sum_{g\in F_1}|\tau_Q(\xi f_g)|^2\le2\epsilon$.
Since $t_0\in F_1$ and $\epsilon$ was arbitrary, $\tau_Q(\xi f_{t_0})=0$.
By scaling, this holds for every $\xi\in Q$ with $\mathbb{E}_B(\xi)=0$.
Taking $\xi=(f_{t_0}-\mathbb{E}_B(f_{t_0}))^*$ gives
$\|f_{t_0}-\mathbb{E}_B(f_{t_0})\|_2^2=0$, so $f_{t_0}\in B$.
As $t_0\neq e$ was arbitrary, every Fourier coefficient of $x$ belongs to $B$,
and hence $x\in B\rtimes\Gamma$.
\end{proof}

\begin{remark}\label{rem:beyond}
If $\mathbb{E}(\cP)\subseteq\cP$, then $\mathbb{E}(\cP)''=\mathbb{E}(\cP)$, and
Corollary~\ref{cor:cs-crossed} gives the conclusion of Theorem~\ref{thm:gen}
without the ECI property. If $\mathbb{E}(\cP)''=Q$, the conclusion is trivial.
In particular, for a prime p.m.p.~action, $\mathbb{E}(\cP)''=\Linf$ as soon as
$\mathbb{E}(\cP)\neq\C1$.
\end{remark}
\section{The tensor product case}\label{sec:tensor}
We now turn our attention to the tensor product case. 
While the results of Section~\ref{sec:commuting-square} on $L(\Gamma)\bar\otimes Q$
and Proposition~\ref{prop:insideamenradtensor} below hold for an
arbitrary tracial von Neumann algebra $(Q,\tau_Q)$, we restrict ourselves here to $Q=\Linf$ for the following reason. By
Corollary~\ref{cor:cs-tensor}, a key point is to show that an invariant
subalgebra $\cP$ forms a commuting square with $\mathbb{C}1\bar\otimes Q$; in
the crossed product, this is exactly where central freeness was used
(Lemma~\ref{lem:central-free-expectation}). When $Q=\Linf$ is commutative, the
algebra $\mathbb{C}1\bar\otimes\Linf$ is contained in the centre of
$L(\Gamma)\bar\otimes\Linf$, so $\mathbb{E}_{\cP}$ maps it into the centre
$Z(\cP)$. Consequently, the commuting square is automatic as soon as
$Z(\cP)\subseteq\mathbb{C}1\bar\otimes\Linf$
(Lemma~\ref{lem:tensor-center-valued-character} and
Remark~\ref{rem:tensor-cs}), and this inclusion holds whenever $\Gamma$ has
trivial amenable radical (Corollary~\ref{cor:diagonal-tensor-center}). For
noncommutative $Q$, the algebra $\mathbb{C}1\bar\otimes Q$ is not central in
$L(\Gamma)\bar\otimes Q$, and this mechanism is not available.

Throughout this section, $\sigma$ is a p.m.p.~action on $(X,\nu)$,
and $L(\Gamma)\bar\otimes\Linf$ carries the diagonal action and trace
from Section~\ref{sec:prelim}. We write
$x=\sum_{t\in\Gamma}\lambda_t\otimes f_t$ for its $L^2$ Fourier
expansion.

The first lemma determines the expectation of functions in $\Linf$  onto a von Neumann subalgebra
$\cP$ with a specific position of the centre (invariance is not needed for this assertion).

\begin{lem}\label{lem:tensor-center-valued-character}
Let $\cP\subseteq L(\Gamma)\bar\otimes\Linf$ be a von Neumann
subalgebra such that $Z(\cP)\subseteq\mathbb{C}1\bar{\otimes}\Linf$, and write
$Z(\cP)=\mathbb{C}1\bar{\otimes} B$ for a von Neumann subalgebra $B\subseteq\Linf$.
Then $\mathbb{E}_{\cP}(1\otimes f)=1\otimes\mathbb{E}_B(f)$ for
all $f\in\Linf$. In particular, if $\cP$ is a factor, then
$\mathbb{E}_{\cP}(1\otimes f)=\nu(f)1$. If $\cP$ is
$\Gamma$-invariant, then $B$ is $\Gamma$-invariant.
\end{lem}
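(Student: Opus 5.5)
The plan is to show first that $\mathbb{E}_{\cP}(1\otimes f)$ is central in $\cP$, then identify it via the trace. Because $\Linf$ is abelian, $\mathbb{C}1\bar\otimes\Linf$ lies in the centre of $M=L(\Gamma)\bar\otimes\Linf$.

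\emph{Centrality.} Fix $f\in\Linf$ and $m\in\cP$. Since $1\otimes f$ commutes with $m$, bimodularity of $\mathbb{E}_{\cP}$ gives
\[
m\,\mathbb{E}_{\cP}(1\otimes f)=\mathbb{E}_{\cP}\bigl(m(1\otimes f)\bigr)=\mathbb{E}_{\cP}\bigl((1\otimes f)m\bigr)=\mathbb{E}_{\cP}(1\otimes f)\,m .
\]
Hence $\mathbb{E}_{\cP}(1\otimes f)\in Z(\cP)=\mathbb{C}1\bar\otimes B$, and we may write $\mathbb{E}_{\cP}(1\otimes f)=1\otimes g$ for some $g\in B$.

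\emph{Identifying $g$.} Take any $b\in B$. Then $1\otimes b\in Z(\cP)\subseteq\cP$, so
\[
\nu(gb)=\tau\bigl(\mathbb{E}_{\cP}(1\otimes f)(1\otimes b)\bigr)=\tau\bigl((1\otimes f)(1\otimes b)\bigr)=\nu(fb).
\]
Since $g\in B$ and this holds for all $b\in B$, we get $g=\mathbb{E}_B(f)$. This proves the main formula.

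\emph{Factor case.} If $\cP$ is a factor, then $B=\C1$, so $\mathbb{E}_B(f)=\nu(f)1$.

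\emph{Invariance.} If $\cP$ is $\Gamma$-invariant, each $\theta_s$ restricts to an automorphism of $\cP$. It therefore preserves $Z(\cP)$, and it also preserves $\mathbb{C}1\bar\otimes\Linf$, acting there as $\sigma_s$. Hence $\sigma_s(B)=B$.

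There is no real obstacle here. The only point needing care is the use of commutativity of $\Linf$ to place $1\otimes f$ in the centre of $M$.
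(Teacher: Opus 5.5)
Your proposal is correct and follows essentially the same route as the paper. Both arguments use centrality of $\mathbb{C}1\bar\otimes\Linf$ and bimodularity to place $\mathbb{E}_{\cP}(1\otimes f)$ in $Z(\cP)=\mathbb{C}1\bar\otimes B$, identify the coefficient as $\mathbb{E}_B(f)$ by pairing against $1\otimes b$ via trace preservation, and read off the factor and invariance statements directly.
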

\begin{proof}
For $f\in\Linf$ and $x\in\cP$, centrality of $1\otimes f$ and bimodularity of $\mathbb{E}_{\cP}$ give
\[
 x\mathbb{E}_{\cP}(1\otimes f)
 =\mathbb{E}_{\cP}(x(1\otimes f))
 =\mathbb{E}_{\cP}((1\otimes f)x)
 =\mathbb{E}_{\cP}(1\otimes f)x.
\]
Thus, $\mathbb{E}_{\cP}(1\otimes f)=1\otimes b_f$ for some $b_f\in B$.
For every $b\in B$, trace preservation gives
$\nu(b^*b_f) =\tau\bigl((1\otimes b)^*\mathbb{E}_{\cP}(1\otimes f)\bigr) =\nu(b^*f)$.
This characterizes $b_f=\mathbb{E}_B(f)$. The factor assertion follows
by taking $B=\C1$. If $\cP$ is invariant, then $\theta_s$ preserves
$Z(\cP)=\mathbb{C}1\bar{\otimes} B$, so $\sigma_s(B)=B$ for every $s\in\Gamma$.
\end{proof}

\begin{remark}\label{rem:tensor-cs}
Lemma~\ref{lem:tensor-center-valued-character} shows that under the stated assumptions
$\mathbb{E}_{\cP}$ preserves $\mathbb{C}1\bar{\otimes}\Linf$. By
Lemma~\ref{lem:commuting-square}, these algebras form a commuting square,
and
$$\cP\cap(\mathbb{C}1\bar{\otimes}\Linf) =\mathbb{E}_{\cP}(\mathbb{C}1\bar{\otimes}\Linf)=\mathbb{C}1\bar{\otimes} B=Z(\cP).$$
\end{remark}

\begin{lem}\label{lem:tensor-group-expectation}
Let $\Gamma$ be a countable discrete group and let
$\Gamma\curvearrowright(X,\nu)$ be a p.m.p.~action, denoted by $\sigma$, which is
pointwise weakly mixing.
If $\cP\subseteq L(\Gamma)\bar\otimes\Linf$ is a von Neumann subalgebra invariant under the
diagonal action, then
$\mathbb{E}_{\cP}(L(\Gamma)\bar{\otimes}\mathbb{C}1)\subseteq L(\Gamma)\bar{\otimes}\mathbb{C}1$ and $\mathbb{E}_{L(\Gamma)}(\cP)\subseteq\cP$.
\end{lem}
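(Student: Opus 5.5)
The plan is to prove the first inclusion directly from a Fourier computation that exploits the $\theta_t$-invariance of $\lambda_t\otimes1$. The second inclusion then follows from Lemma~\ref{lem:commuting-square} applied with $A=L(\Gamma)\bar\otimes\C1$, since condition (ii) there gives condition (i). By linearity and normality of $\mathbb{E}_{\cP}$, and since $L(\Gamma)\bar\otimes\C1$ is weak$^*$-closed, it suffices to show that $\mathbb{E}_{\cP}(\lambda_t\otimes1)\in L(\Gamma)\bar\otimes\C1$ for each $t\in\Gamma$. For $t=e$ this is trivial, so fix $t\neq e$.

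Set $y=\mathbb{E}_{\cP}(\lambda_t\otimes1)=\sum_{s\in\Gamma}\lambda_s\otimes y_s$. Since $\cP$ is $\theta$-invariant and $\theta$ preserves $\tau$, the expectation $\mathbb{E}_{\cP}$ is $\theta$-equivariant. Moreover $\theta_t(\lambda_t\otimes1)=\lambda_t\otimes1$, so $\theta_t(y)=y$. We have
\[
\theta_t(y)=\sum_{s}\lambda_{tst^{-1}}\otimes\sigma_t(y_s).
\]
Comparing coefficients gives $y_{tst^{-1}}=\sigma_t(y_s)$ for every $s$, and iterating gives $y_{t^nst^{-n}}=\sigma_t^n(y_s)$. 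In particular $\|y_{t^nst^{-n}}\|_2=\|y_s\|_2$ for all $n\in\Z$.

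Now fix $s$ and consider its orbit under conjugation by powers of $t$. If this orbit is infinite, then infinitely many coefficients share the norm $\|y_s\|_2$. Since $\sum_s\|y_s\|_2^2=\|y\|_2^2<\infty$, this forces $y_s=0$. If the orbit is finite, let $k\ge1$ be its length; then $t^kst^{-k}=s$, so $\sigma_{t^k}(y_s)=y_s$. The case $k=1$ covers $s\in C_\Gamma(t)$.

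We may assume $X$ is not a point, since otherwise the statement is empty. Then pointwise weak mixing forces $\Gamma$ to be torsion-free, as noted after Theorem~\ref{thm:main}, so $t^k\neq e$. Hence $\sigma_{t^k}$ is weakly mixing and in particular ergodic, which gives $y_s\in\C1$. Thus every coefficient of $y$ is a scalar, and $y\in L(\Gamma)\bar\otimes\C1$.

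The only delicate point is the finite-orbit case, which needs $t^k\neq e$. If one wants to avoid the torsion-freeness remark, the same conclusion follows directly: a nontrivial periodic automorphism of $\Linf$ with $\Linf\neq\C1$ cannot be weakly mixing, which already forces $t^k\neq e$. The rest of the argument is a routine coefficient comparison.
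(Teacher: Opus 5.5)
Your proof is correct and follows the paper's argument. The $\theta_t$-invariance of $\mathbb{E}_{\cP}(\lambda_t\otimes1)$ gives $y_{tst^{-1}}=\sigma_t(y_s)$; infinite conjugation orbits kill coefficients by square-summability; finite orbits give scalars by ergodicity; and Lemma~\ref{lem:commuting-square} yields the second inclusion. The differences are cosmetic: in the finite-orbit case you get weak mixing of $\sigma_{t^k}$ from torsion-freeness, whereas the paper argues directly that every positive power of the weakly mixing $\sigma_t$ is ergodic, and you extend from the group unitaries to all of $L(\Gamma)\bar\otimes\C1$ by weak$^*$-continuity rather than $L^2$-continuity of $\mathbb{E}_{\cP}$.
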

\begin{proof}
Fix $g\neq e$ and write
$x=\mathbb{E}_{\cP}(\lambda_g\otimes1)
  =\sum_{t\in\Gamma}\lambda_t\otimes f_t$.
Equivariance gives $\theta_g(x)=x$, and comparison of Fourier
coefficients yields $f_{gtg^{-1}}=\sigma_g(f_t)$ for every $t\in\Gamma$.
On an infinite orbit under conjugation by $g$, all coefficients have
the same $L^2$ norm. Since $\sum_t\|f_t\|_2^2=\|x\|_2^2<\infty$,
these coefficients vanish. On an orbit of finite length $m$,
$\sigma_g^m(f_t)=f_t$. Every positive power of a weakly mixing
transformation is ergodic, so $f_t$ is scalar. Indeed, a nonzero
mean-zero function fixed by $\sigma_g^m$ would generate a nonzero
finite-dimensional invariant subspace under $\sigma_g$, contradicting
weak mixing. Thus, every Fourier coefficient of $x$ is scalar, and
$x\in L^2(L(\Gamma)\bar{\otimes}\mathbb{C}1)$. As $x$ is bounded, it belongs to
$L(\Gamma)\bar{\otimes}\mathbb{C}1$. The case $g=e$ follows from
$\mathbb{E}_{\cP}(1)=1$.

%Finite linear combinations of the group unitaries are dense in
%$L^2(L(\Gamma)\bar{\otimes}\mathbb{C}1)$. 
The $L^2$-continuity of
$\mathbb{E}_{\cP}$ shows that it preserves $L^2(L(\Gamma)\bar{\otimes}\mathbb{C}1)$. For a bounded element of $L(\Gamma)\bar{\otimes}\mathbb{C}1$, its expectation
is bounded as well, so it belongs to $L(\Gamma)\bar{\otimes}\mathbb{C}1$. 
This proves
the first containment. Lemma~\ref{lem:commuting-square}, applied to
$\cP$ and $L(\Gamma)\bar{\otimes}\mathbb{C}1$, now gives the second containment.
\end{proof}

\subsection{Classification under the centre-position assumption}

The centre of $\cP$ determines the function-algebra part of the desired tensor decomposition.
If $Z(\cP)=\mathbb{C}1\bar{\otimes} B$, Lemma~\ref{lem:tensor-center-valued-character}
gives $\mathbb{E}_{\cP}(\mathbb{C}1\bar{\otimes}\Linf)=\mathbb{C}1\bar{\otimes} B$.
The resulting commuting square allows relative weak mixing to place
every Fourier coefficient in $B$.

\begin{thm}\label{thm:tensor-center-in-A}
Let $\Gamma$ be a countable discrete group with the ISR property. Let
$\Gamma\curvearrowright(X,\nu)$ be a p.m.p.~action which is pointwise weakly mixing relative to every proper $\Gamma$-factor of $\Linf$, including the
trivial factor. Let $\cP\subseteq L(\Gamma)\bar\otimes \Linf$ be a $\Gamma$-invariant von Neumann
subalgebra such that $Z(\cP)\subseteq\mathbb{C}1\bar{\otimes}\Linf$, i.e.~$Z(\cP)=\mathbb{C}1\bar{\otimes} B$ for a
$\Gamma$-invariant von Neumann subalgebra $B\subseteq\Linf$. Then there exists a normal subgroup
$N\trianglelefteq\Gamma$ such that $\cP=L(N)\bar\otimes B$.
\end{thm}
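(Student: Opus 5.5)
The plan is to combine three earlier results: the commuting-square mechanism of Remark~\ref{rem:tensor-cs}, Corollary~\ref{cor:cs-tensor}, and Lemma~\ref{lem:tensor-group-expectation}. Together they trap $\cP$ as $L(N)\bar\otimes B\subseteq\cP\subseteq L(\Gamma)\bar\otimes B$, and a Fourier-coefficient argument then closes the gap.

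\emph{Step 1 (upper bound).} Since $Z(\cP)=\C1\bar\otimes B$, Lemma~\ref{lem:tensor-center-valued-character} and Remark~\ref{rem:tensor-cs} show two things: $\cP$ forms a commuting square with $\C1\bar\otimes\Linf$, and $\cP\cap(\C1\bar\otimes\Linf)=\C1\bar\otimes B$.

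If $B=\Linf$, the inclusion $\cP\subseteq L(\Gamma)\bar\otimes B$ is trivial. Otherwise $B$ is a proper $\Gamma$-factor, possibly trivial, and the hypothesis says $\sigma$ is pointwise weakly mixing relative to $B$. Corollary~\ref{cor:cs-tensor} then gives $\cP\subseteq L(\Gamma)\bar\otimes B$.

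\emph{Step 2 (lower bound).} The hypothesis includes the trivial factor, so $\sigma$ is pointwise weakly mixing. Lemma~\ref{lem:tensor-group-expectation} therefore yields $\mathbb{E}_{L(\Gamma)}(\cP)\subseteq\cP$ and $\mathbb{E}_{\cP}(L(\Gamma)\bar\otimes\C1)\subseteq L(\Gamma)\bar\otimes\C1$.

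By Lemma~\ref{lem:commuting-square}, $\mathbb{E}_{L(\Gamma)}(\cP)=\cP\cap(L(\Gamma)\bar\otimes\C1)$. This is a $\Gamma$-invariant von Neumann subalgebra of $L(\Gamma)$, because both $\cP$ and $L(\Gamma)\bar\otimes\C1$ are $\theta$-invariant and $\theta$ acts on $L(\Gamma)$ by conjugation. By the ISR property it equals $L(N)\bar\otimes\C1$ for some $N\trianglelefteq\Gamma$.

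The same lemma shows that $\mathbb{E}_{\cP}$ restricted to $L(\Gamma)\bar\otimes\C1$ is $\mathbb{E}_{L(N)}\otimes\id$. Since $\cP$ contains both $L(N)\bar\otimes\C1$ and $\C1\bar\otimes B$, which commute, we get $L(N)\bar\otimes B\subseteq\cP$.

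\emph{Step 3 (equality).} Let $x=\sum_t\lambda_t\otimes f_t\in\cP$. By Step~1, every $f_t$ lies in $B$. Fix $t\notin N$ and $b\in B$. Since $1\otimes b\in\cP$, bimodularity gives
\[
\mathbb{E}_{\cP}(\lambda_t\otimes b)=(1\otimes b)\,\mathbb{E}_{\cP}(\lambda_t\otimes1)=(1\otimes b)\,\bigl(\mathbb{E}_{L(N)}(\lambda_t)\otimes1\bigr)=0 .
\]
Hence $\nu(b^*f_t)=\langle x,\lambda_t\otimes b\rangle=\langle x,\mathbb{E}_{\cP}(\lambda_t\otimes b)\rangle=0$ for all $b\in B$. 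Since $f_t\in B$, taking $b=f_t$ forces $f_t=0$. Thus $x$ is supported on $N$ with coefficients in $B$, so $x\in L(N)\bar\otimes B$, and therefore $\cP=L(N)\bar\otimes B$.

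The only delicate point is Step~1, where the relative weak mixing hypothesis must be available for $B$. This is handled by separating the case $B=\Linf$, where the inclusion is vacuous, from the case of proper $B$, which is covered by the hypothesis. Everything else is bimodularity bookkeeping.
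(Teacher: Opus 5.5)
Your proposal is correct and follows the paper's proof essentially step for step: ISR applied to $\mathbb{E}_{L(\Gamma)}(\cP)=\cP\cap(L(\Gamma)\bar\otimes\C1)$ via Lemma~\ref{lem:tensor-group-expectation}, the upper bound $\cP\subseteq L(\Gamma)\bar\otimes B$ via Remark~\ref{rem:tensor-cs} and Corollary~\ref{cor:cs-tensor}, and then killing the coefficients outside $N$ by bimodularity. The differences are cosmetic. You read off $\mathbb{E}_{\cP}(\lambda_t\otimes1)=\mathbb{E}_{L(N)}(\lambda_t)\otimes1$ directly from Lemma~\ref{lem:commuting-square}, whereas the paper uses an inner-product computation. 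The paper also treats the degenerate case $\Linf=\C1$ separately; in your argument that case is harmless, since weak mixing relative to $\C1=\Linf$ holds vacuously there.
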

\begin{proof}
If $\Linf=\C1$, the assertion follows directly from the ISR property. Otherwise the
trivial factor is proper, so $\sigma$ is pointwise weakly mixing.
Set $\cR=\mathbb{E}_{L(\Gamma)}(\cP)$. By
Lemma~\ref{lem:tensor-group-expectation}, $\cR\subseteq\cP$.
Since $\cR\subseteq L(\Gamma)\bar{\otimes}\mathbb{C}1$,
we have $\cR\subseteq\cP\cap(L(\Gamma)\bar{\otimes}\mathbb{C}1)$. Conversely, if $x\in\cP\cap(L(\Gamma)\bar{\otimes}\mathbb{C}1)$, then
$\mathbb{E}_{L(\Gamma)}(x)=x$, and hence $x\in \cR$. Thus, $\cR=\cP\cap(L(\Gamma)\bar{\otimes}\mathbb{C}1)$. In particular,
$\cR$ is a $\Gamma$-invariant von Neumann subalgebra of $L(\Gamma)$, and by the ISR property $\cR=L(N)$
for some normal subgroup $N\trianglelefteq\Gamma$.

We claim that $\cP\subseteq L(\Gamma)\bar\otimes B$. By Remark~\ref{rem:tensor-cs}, $\cP$ forms a
commuting square with $\mathbb{C}1\bar{\otimes}\Linf$ and $\cP\cap(\mathbb{C}1\bar{\otimes}\Linf)=\mathbb{C}1\bar{\otimes} B$. If $B=\Linf$ there is
nothing to prove. Otherwise $B$ is a proper $\Gamma$-factor, $\sigma$ is pointwise weakly mixing relative
to $B$ and the claim follows from Corollary~\ref{cor:cs-tensor}.

Next, we claim that $\cP\subseteq L(N)\bar\otimes B$. Let $t\notin N$. First note that
$\mathbb{E}_{\cP}(\lambda_t\otimes1)=0$. Indeed, for every $z\in\cP$ we have
$\langle\lambda_t\otimes1,z\rangle=\langle\lambda_t\otimes1,\mathbb{E}_{L(\Gamma)}(z)\rangle=0$, since
$\mathbb{E}_{L(\Gamma)}(z)\in \cR=L(N)$ and $\lambda_t\perp L(N)$. Consequently, for every $b\in B$,
bimodularity gives
$\mathbb{E}_{\cP}(\lambda_t\otimes b)=\mathbb{E}_{\cP}(\lambda_t\otimes1)(1\otimes b)=0$, because
$1\otimes b\in Z(\cP)\subseteq\cP$. Now let $x=\sum_{s\in\Gamma}\lambda_s\otimes f_s\in\cP$. By the
previous claim $f_t\in B$, and therefore
$\|f_t\|_2^2=\langle x,\lambda_t\otimes f_t\rangle=\langle x,\mathbb{E}_{\cP}(\lambda_t\otimes f_t)\rangle=0$.
Thus, every Fourier coefficient of $x$ outside $N$ vanishes, which proves the claim.

Finally, $L(N)\bar{\otimes}\mathbb{C}1=\cR\subseteq\cP$ and $\mathbb{C}1\bar{\otimes} B=Z(\cP)\subseteq\cP$, so
$L(N)\bar\otimes B\subseteq\cP$. Together with the previous claim, we see that $\cP=L(N)\bar\otimes B$.
\end{proof}

We next remove the centre-position assumption when the \emph{amenable radical} of $\Gamma$, namely its largest amenable normal subgroup,
is trivial. We denote it by $\operatorname{Rad}(\Gamma)$. 
\subsection{General principle}\label{sec:center-position}

We adapt the Furstenberg-boundary hypertrace argument of Amrutam and Jiang
in the appendix to \cite{zhou2024noncommutative}, specifically Lemma~8.1
and the second proof of Theorem~F of that paper. 

The following result is a general statement concerning actions of $\Gamma$ on tracial von Neumann algebras. 

\begin{proposition}
\label{prop:insideamenradtensor}
Let $\Gamma$ be a countable discrete group, let $(Q,\tau_Q)$ be a von Neumann algebra with a faithful normal tracial state and let
$\sigma$ be a trace-preserving action of $\Gamma$ on $(Q,\tau_Q)$. Let
$M=L(\Gamma)\bar\otimes Q$ and let $\theta$ be again the diagonal action of $\Gamma$ on $M$, given by
$\theta_s(\lambda_t\otimes a)=\lambda_{sts^{-1}}\otimes\sigma_s(a)$
for $s,t\in\Gamma$ and $a\in Q$.
If $\cP\subseteq M$ is an amenable von Neumann subalgebra such that
$\theta_s(\cP)=\cP$ for every $s\in\Gamma$, then
$\cP\subseteq L(\operatorname{Rad}(\Gamma))\bar\otimes Q$.
\end{proposition}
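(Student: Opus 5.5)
The plan is to run the Furstenberg-boundary hypertrace argument inside the covariant representation of Subsection~\ref{subsec:boundary-rep}. Take $Z=\partial_F\Gamma$, the Furstenberg boundary, and any $x_0\in Z$; the orbit $\Gamma x_0$ is dense by minimality. Work in $\mathbb B(L^2(\widetilde M))$, where $\widetilde M=M\rtimes_\theta\Gamma$, with the faithful copy of $C(Z)$ given by the operators $m_f$. By Lemma~\ref{lem:boundary-covariance}, $v_sm_fv_s^*=m_{s\cdot f}$, $u_tm_fu_t^*=m_{t\cdot f}$, and $C(Z)$ commutes with $\mathbb C1\bar\otimes Q$. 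I also use the standard fact that $\operatorname{Rad}(\Gamma)$ is exactly the kernel of the action $\Gamma\curvearrowright\partial_F\Gamma$. So for $t\notin\operatorname{Rad}(\Gamma)$ there are a point $z$ and a function $f\in C(Z)$ with $f(tz)\neq f(z)$.

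\textbf{Step 1 (boundary-valued hypertraces).} Since $\cP$ is amenable, the set $K$ of $\cP$-hypertraces is non-empty. These are states $\varphi$ on $\mathbb B(L^2(\widetilde M))$ with $\varphi(yT)=\varphi(Ty)$ for $y\in\cP$ and $T\in\mathbb B(L^2(\widetilde M))$, and $\varphi|_{\cP}=\tau$. The set $K$ is weak$^*$-compact and convex. Because $v_s\cP v_s^*=\theta_s(\cP)=\cP$ and $\theta_s$ preserves $\tau$, the map $\varphi\mapsto\varphi\circ\operatorname{Ad}(v_s^*)$ is an affine action of $\Gamma$ on $K$. Restriction to $C(Z)$ gives an affine $\Gamma$-equivariant continuous map $K\to\operatorname{Prob}(Z)$, by the first covariance relation. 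Its image is a non-empty closed convex $\Gamma$-invariant subset of $\operatorname{Prob}(\partial_F\Gamma)$. By strong proximality and minimality, this image contains $\delta_z$ for every $z\in Z$. Hence for every $z$ there is a $\cP$-hypertrace $\varphi_z$ with $\varphi_z|_{C(Z)}=\delta_z$. In particular $C(Z)$ lies in the multiplicative domain of $\varphi_z$, so $\varphi_z(m_fT)=\varphi_z(Tm_f)=f(z)\varphi_z(T)$.

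\textbf{Step 2 (killing Fourier coefficients).} Fix $x=\sum_t\lambda_t\otimes f_t\in\cP$ and $t_0\notin\operatorname{Rad}(\Gamma)$. Apply $\varphi_z$ to $x\,m_f\,T$ and to $m_f\,T\,x$, where $T=(1\otimes a)\,u_{t_0}^*$ and $a\in Q$ (or a suitable variant). The hypertrace property moves $x$ across, and multiplicativity on $C(Z)$ removes $m_f$ at the cost of the scalar $f(z)$. Commuting $m_f$ past the terms $u_t(1\otimes f_t)$ of $x$ via $u_tm_f=m_{t\cdot f}u_t$ produces the scalars $f(t^{-1}z)$ or $f(tz)$, depending on convention. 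The result should be an identity of the form
\[
\sum_{t}\bigl(f(t^{-1}z)-f(z)\bigr)\,\varphi_z\bigl((\lambda_t\otimes f_t)T\bigr)=0 .
\]
I then want to isolate the $t_0$ term by a second choice of $f$ that separates $t_0^{-1}z$ from the other points, and compare the relevant values of $\varphi_z$ with $\tau$. The comparison uses the fact that $\varphi_z$ restricted to $\cP$ is $\tau$, together with the Cauchy--Schwarz bound $|\varphi_z(yT)|\le\varphi_z(yy^*)^{1/2}\|T\|$ for $y\in\cP$, and approximation of $x$ by finite Fourier sums $x_F$ as in \eqref{eq:basic}. The intended conclusion is $\tau_Q(a^*f_{t_0})=0$ for all $a\in Q$, hence $f_{t_0}=0$. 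Since $t_0\notin\operatorname{Rad}(\Gamma)$ was arbitrary, this gives $x\in L(\operatorname{Rad}(\Gamma))\bar\otimes Q$.

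\textbf{Main obstacle.} I expect the main difficulty to be Step 2. The state $\varphi_z$ is only tracial against $\cP$, not against $M$, so one cannot directly read off $\tau$ of the product $(\lambda_t\otimes f_t)T$ for single terms. A cleaner route, which I would try first, mirrors Amrutam--Jiang. Show that for $y\in\cP$ the map $y\mapsto\varphi_z(y\,\cdot\,)$ on $\widetilde M$ agrees with $\tau\circ\mathbb{E}_{\cP}$ on $\widetilde M$. Then use the $\Gamma$-invariance of $\cP$ to average over the conjugates $\theta_s$, which replaces $z$ by $sz$. Finally pick $z$ with $t_0z\neq z$ and $f$ with $f(t_0z)\neq f(z)$. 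This yields $\mathbb{E}_{\cP}\bigl(\lambda_{t_0}\otimes a\bigr)=0$ for all $a\in Q$ whenever $t_0\notin\operatorname{Rad}(\Gamma)$. Pairing with $x\in\cP$ then gives $f_{t_0}=0$, exactly as in the proof of Corollary~\ref{cor:cs-tensor}.
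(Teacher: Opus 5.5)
\textbf{There is a genuine gap: the hypertraces must be normalised on all of $\widetilde M$, and your $K$ is not.} Your Step~1 is the paper's argument. It uses the covariant copy of $C(\partial_F\Gamma)$ on $L^2(\widetilde M)$, an $\mathrm{Ad}(v_s^*)$-invariant compact convex set of $\cP$-central states, and the fact that its image in $\operatorname{Prob}(\partial_F\Gamma)$ is everything. This produces $\varphi$ with $\varphi|_{C(\partial_F\Gamma)}=\delta_z$ and $C(\partial_F\Gamma)$ in its multiplicative domain.

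The problem is that you only require $\varphi|_{\cP}=\tau$. Such states carry no information about $\widetilde M$ outside $\cP$; for $\cP=\C1$, for instance, every state qualifies. So for an arbitrary element of $K$:
\begin{itemize}
\item the values $\varphi_z\bigl((\lambda_t\otimes f_t)T\bigr)$ in your identity are uncontrolled;
\item the claim that $\varphi_z(y\,\cdot\,)$ agrees with $\tau\circ\mathbb{E}_{\cP}$ on $\widetilde M$ cannot be \emph{proved};
\item the truncation $x\approx x_F$ cannot be handled. $\varphi_z$ is not normal, so it does not pass through the $L^2$-convergent Fourier series. Your Cauchy--Schwarz bound only covers $y\in\cP$, whereas $x-x_F\notin\cP$ in general.
\end{itemize}
As written, Step~2 is a plan rather than an argument. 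The ``second separating function'' and the averaging over $\theta_s$ are unspecified, and in fact unnecessary, since Step~1 already gives $\varphi_z$ for every $z$.

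\textbf{The fix.} Build the normalisation into the convex set. Take the $\cP$-central states $\varphi$ on $\mathbb B(L^2(\widetilde M))$ with $\varphi|_{\widetilde M}=\widetilde\tau$; this is the paper's $\operatorname{Hype}_{\widetilde\tau}(\cP)$, and the paper cites \cite{amrutam2025amenable} for non-emptiness.
\begin{itemize}
\item Non-emptiness can also be seen directly. Put $\varphi(T)=\psi(e_{\cP}Te_{\cP})$ for a hypertrace $\psi$ of $\cP$ on $\mathbb B(L^2(\cP))$, and use $e_{\cP}me_{\cP}=\mathbb{E}_{\cP}(m)e_{\cP}$ for $m\in\widetilde M$.
\item The set is still $\mathrm{Ad}(v_s^*)$-invariant, because $\widetilde\tau\circ\mathrm{Ad}(v_s^*)=\widetilde\tau$. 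So your Step~1 goes through unchanged.
\item With this normalisation your identity works. We have $\varphi\bigl((x-x_F)(x-x_F)^*\bigr)=\|x-x_F\|_2^2$ and $\varphi_z\bigl(u_t(1\otimes f_ta)u_{t_0}^*\bigr)=\delta_{t,t_0}\tau_Q(f_{t_0}a)$. This gives $\bigl(f(t_0^{-1}z)-f(z)\bigr)\tau_Q(f_{t_0}a)=0$, which suffices.
\end{itemize}

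\textbf{The paper's shorter computation.} It avoids Fourier expansion entirely by moving $m_f$ across the single test element $u_{t_0}^*(1\otimes b)$ instead of across $x$. Choose $f(z)=1$ and $f(t_0z)=0$, and use that $1\otimes b$ commutes with $m_f$ and $u_{t_0}^*m_f=m_{t_0^{-1}\cdot f}u_{t_0}^*$. Then
\begin{align*}
\widetilde\tau\bigl(xu_{t_0}^*(1\otimes b)\bigr)
&=\varphi\bigl(xu_{t_0}^*(1\otimes b)m_f\bigr)
=\varphi\bigl(u_{t_0}^*(1\otimes b)m_fx\bigr)\\
&=\varphi\bigl(m_{t_0^{-1}\cdot f}\,u_{t_0}^*(1\otimes b)x\bigr)
=f(t_0z)\,\varphi\bigl(u_{t_0}^*(1\otimes b)x\bigr)=0 .
\end{align*}
The left-hand side is $\tau_Q(f_{t_0}b)$, so $f_{t_0}=0$ for every $t_0\notin\operatorname{Rad}(\Gamma)$.
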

\begin{proof}
By our standing assumption, $Q$ has separable predual.
Let $\widetilde{M}=M\rtimes_\theta\Gamma$, and denote by $v_s$, $s\in\Gamma$, the canonical
unitaries implementing the action inside $\widetilde{M}$, so that  $v_sxv_s^*=\theta_s(x)$ for every $x\in M$, $s \in \Gamma$. Hence when we view $\cP$ inside $\widetilde{M}$, the assumption
$\theta_s(\cP)=\cP$ becomes $v_s\cP v_s^*=\cP$. Write $u_t=\lambda_t\otimes1\in M$, $t \in \Gamma$. Then
$v_su_tv_s^*=u_{sts^{-1}}$.

The automorphism action is now represented as conjugation, while the Fourier support we want to control
is still the support in the $u_t$-coordinate. Apply the construction of  Subsection~\ref{subsec:boundary-rep}
with $Z=\partial_F\Gamma$. By minimality, every $x_0\in\partial_F\Gamma$
has a dense orbit, so the multiplication operators $m_f$ defined there give a faithful
representation of $C(\partial_F\Gamma)$ on $L^2(\widetilde M)$.
Lemma~\ref{lem:boundary-covariance} gives
$u_tm_fu_t^*=m_{t\cdot f}$ and $v_sm_fv_s^*=m_{s\cdot f}$ for all $s,t\in\Gamma$, $f \in C(\partial_F\Gamma)$.

Let $\widetilde\tau$ be the canonical trace on $\widetilde M$. Since $Q$
has separable predual and $\Gamma$ is countable, $\widetilde M$ also has
separable predual. Let
$\mathcal H=\operatorname{Hype}_{\widetilde\tau}(\cP)$ be the set of states
$\varphi$ on $\mathbb B(L^2(\widetilde M))$ satisfying
$\varphi|_{\widetilde M}=\widetilde\tau$ and $\varphi(aT)=\varphi(Ta)$ for all $a\in\cP$ and $T\in\mathbb B(L^2(\widetilde M))$.
By \cite[Proposition~2.4]{amrutam2025amenable}, $\mathcal H$ is nonempty.
These are weak$^*$-closed affine conditions
on the state space, so $\mathcal H$ is compact and convex. It is invariant
under the action given by $(s\cdot\varphi)(T)=\varphi(v_s^*Tv_s)$ ($s\in\Gamma$, $\varphi\in\mathcal H$, $T\in\mathbb B(L^2(\widetilde M))$) because $v_s$ normalises $\cP$ and $\widetilde\tau$ is a trace.
Restriction to the represented copy of $C(\partial_F\Gamma)$ is a
continuous affine map, and Lemma~\ref{lem:boundary-covariance} makes it
$\Gamma$-equivariant. Its image
$\mathcal K=\{\varphi|_{C(\partial_F\Gamma)}\mid\varphi\in\mathcal H\}\subseteq\operatorname{Prob}(\partial_F\Gamma)$
is therefore nonempty, compact, convex and $\Gamma$-invariant. Hence it equals
$\operatorname{Prob}(\partial_F\Gamma)$ (see for example \cite[Section~2]{amrutam2025amenable}).

\begin{comment}
We spell out the boundary argument underlying
\cite[Appendix, Lemma~8.1]{zhou2024noncommutative}. Choose
$\mu\in\mathcal K$. Strong proximality gives a point mass $\delta_y$ in
the weak$^*$ closure of $\Gamma\mu$, hence in $\mathcal K$.
Minimality and invariance then give $\delta_z\in\mathcal K$ for every
$z\in\partial_F\Gamma$. Finite convex combinations of point masses are
weak$^*$ dense in the probability measures on a compact Hausdorff space.
Since $\mathcal K$ is closed and convex, it follows that
$\mathcal K=\operatorname{Prob}(\partial_F\Gamma)$.
\end{comment}

Fix $t_0\in\Gamma\setminus\operatorname{Rad}(\Gamma)$. Since
$\operatorname{Rad}(\Gamma)=\ker(\Gamma\curvearrowright\partial_F\Gamma)$, we may choose $x\in \partial_F\Gamma$ such
that $t_0x\neq x$. Choose $f\in C(\partial_F\Gamma)$ with $0\leq f\leq1$, $f(x)=1$, and $f(t_0x)=0$. By the
previous paragraph we can select
$\varphi\in\operatorname{Hype}_{\widetilde\tau}(\cP)$ such that $\varphi|_{C(\partial_F\Gamma)}=\delta_x$.
Then $C(\partial_F\Gamma)$ is contained in the multiplicative domain of $\varphi$.
Let $a\in\cP$ and $b\in Q$. Write $f'=t_0^{-1}\cdot f$, so that $u_{t_0}^*(1\otimes b)=(1\otimes b)u_{t_0}^*$,
$u_{t_0}^*m_f=m_{f'}u_{t_0}^*$ and $u_{t_0}m_{f'}u_{t_0}^*=m_f$. Note that
$0\le f'\le1$,
$\varphi(f')=f'(x)=f(t_0x)=0$, and  $C(\partial_F\Gamma)$ commutes with
$\mathbb{C}1\bar{\otimes}Q$ in $\mathbb{B}(L^2(\widetilde{M}))$ (both act
diagonally in the two $\ell^2(\Gamma)$-coordinates, the former by scalars on
each fibre). Using the multiplicative-domain identities and $\cP$-centrality, we obtain
\begin{align*}
\widetilde\tau\bigl(a u_{t_0}^*(1\otimes b)\bigr)
 &=\varphi\bigl(a u_{t_0}^*(1\otimes b)\bigr) =\varphi\bigl(a u_{t_0}^*(1\otimes b)m_f\bigr)\\
 &=\varphi\bigl(u_{t_0}^*(1\otimes b)m_f a\bigr) =\varphi\bigl(m_{f'}(1\otimes b)u_{t_0}^*a\bigr) \\&=\varphi(m_{f'})\varphi\bigl((1\otimes b)u_{t_0}^*a\bigr)=0.
\end{align*}
Here the second equality uses $f(x)=1$, the third uses $a\in\cP$,
the fourth uses covariance and commutation with $\mathbb{C}1\bar{\otimes}Q$, and the
last uses $f'(x)=0$.

%Let $\mathbb{E}_Q$ denote the canonical $\tau$-preserving conditional expectation from $M$ onto $Q$. 
Let $\mathbb{E}_Q$ denote the canonical conditional expectation from $M$ onto $\mathbb{C}1\bar\otimes Q$, identified with $Q$.
The last identity gives
$\tau_Q(b\mathbb{E}_Q(au_{t_0}^*))=0$ for every $b\in Q$. Hence
$\mathbb{E}_Q(au_{t_0}^*)=0$.
Since $t_0\notin\operatorname{Rad}(\Gamma)$ was arbitrary, all Fourier coefficients of $a$ outside
$\operatorname{Rad}(\Gamma)$ vanish. Thus, $a\in L(\operatorname{Rad}(\Gamma))\bar\otimes Q$.

Since $a\in\cP$ was arbitrary, this proves the assertion.

\end{proof}
The following is then immediate.
\begin{cor}\label{cor:amenable-tensor}
Let $\Gamma\curvearrowright(X,\nu)$ be a p.m.p.~action. Every amenable
$\Gamma$-invariant von Neumann subalgebra
$\cP\subseteq L(\Gamma)\bar\otimes\Linf$ for the diagonal action satisfies
$\cP\subseteq L(\operatorname{Rad}(\Gamma))\bar\otimes\Linf$.
In particular, if $\operatorname{Rad}(\Gamma)=\{e\}$, then
$\cP\subseteq\mathbb{C}1\bar{\otimes}\Linf$.
\end{cor}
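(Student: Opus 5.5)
The corollary should follow by specialising Proposition~\ref{prop:insideamenradtensor} to $Q=\Linf$ with $\tau_Q=\nu$. Then $\Linf$ is a tracial von Neumann algebra with separable predual, since $(X,\nu)$ is standard. The p.m.p.\ action $\sigma$ is a trace-preserving action on it, and the diagonal action on $L(\Gamma)\bar\otimes\Linf$ is exactly the action $\theta$ from the proposition. So every hypothesis of the proposition holds for an amenable $\Gamma$-invariant $\cP$, and we obtain $\cP\subseteq L(\operatorname{Rad}(\Gamma))\bar\otimes\Linf$ directly.

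For the second assertion, assume $\operatorname{Rad}(\Gamma)=\{e\}$. Then $L(\operatorname{Rad}(\Gamma))=\C1$, so $L(\operatorname{Rad}(\Gamma))\bar\otimes\Linf=\C1\bar\otimes\Linf$. This could also be read off the proof of the proposition: every Fourier coefficient of an element of $\cP$ at $t\neq e$ vanishes, so only the $\lambda_e$-component survives.

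No step is a genuine obstacle. The only point that needs a check is that the standing separability assumption, which the proposition uses to obtain a nonempty hypertrace set, is satisfied here; it is, because $(X,\nu)$ is a standard probability space.
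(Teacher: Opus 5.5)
Your proposal is correct and matches the paper, which records the corollary as an immediate specialisation of Proposition~\ref{prop:insideamenradtensor} to $Q=\Linf$ with $\tau_Q=\nu$. The second assertion then follows from $L(\{e\})=\C1$, exactly as you say.
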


\begin{cor}\label{cor:diagonal-tensor-center}
Let $\Gamma\curvearrowright(X,\nu)$ be a p.m.p.~action and let
$\cP\subseteq L(\Gamma)\bar\otimes\Linf$ be invariant under the diagonal action.
Then $Z(\cP)\subseteq L(\operatorname{Rad}(\Gamma))\bar\otimes\Linf$.
In particular, if $\operatorname{Rad}(\Gamma)=\{e\}$, then
$Z(\cP)\subseteq\mathbb{C}1\bar{\otimes}\Linf$.
\end{cor}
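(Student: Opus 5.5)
The plan is to apply Proposition~\ref{prop:insideamenradtensor} to the centre itself. The centre $Z(\cP)$ is an abelian, hence amenable, von Neumann subalgebra of $M=L(\Gamma)\bar\otimes\Linf$. So it suffices to check that $Z(\cP)$ is invariant under the diagonal action. Once that holds, Proposition~\ref{prop:insideamenradtensor} with $Q=\Linf$ gives $Z(\cP)\subseteq L(\operatorname{Rad}(\Gamma))\bar\otimes\Linf$ directly. Equivalently, we can invoke Corollary~\ref{cor:amenable-tensor}, which is exactly this specialisation.

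For invariance, I will use that each $\theta_s$ is a $*$-automorphism of $M$ with $\theta_s(\cP)=\cP$. Such an automorphism restricts to an automorphism of $\cP$, and any automorphism of a von Neumann algebra maps its centre onto its centre. Concretely, if $z\in Z(\cP)$ and $y\in\cP$, write $y=\theta_s(y')$ with $y'\in\cP$. Then $\theta_s(z)y=\theta_s(zy')=\theta_s(y'z)=y\theta_s(z)$, so $\theta_s(z)\in Z(\cP)$. Applying the same argument to $s^{-1}$ gives $\theta_s(Z(\cP))=Z(\cP)$.

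Amenability of $Z(\cP)$ is standard, since abelian von Neumann algebras are injective. No separability issue arises, because $Z(\cP)$ sits inside $M$, which has separable predual by the standing assumption. The \emph{in particular} clause follows by substituting $\operatorname{Rad}(\Gamma)=\{e\}$, since $L(\{e\})\bar\otimes\Linf=\C1\bar\otimes\Linf$.

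There is no real obstacle here: all the analytic work (hypertraces and the boundary $\partial_F\Gamma$) is already done in Proposition~\ref{prop:insideamenradtensor}. The only points to verify are that the centre is amenable and invariant, which are the two routine checks above.
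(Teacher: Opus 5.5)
Your proposal is correct and is essentially the paper's own proof: the paper also notes that $Z(\cP)$ is abelian, hence amenable, and $\theta$-invariant because $\cP$ is, and then applies Corollary~\ref{cor:amenable-tensor} (that is, Proposition~\ref{prop:insideamenradtensor} with $Q=\Linf$) to $Z(\cP)$. Your explicit check that an automorphism preserving $\cP$ maps $Z(\cP)$ onto $Z(\cP)$ simply spells out a step the paper states in one line.
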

\begin{proof}
Since $Z(\cP)$ is abelian, it is amenable. Since $\cP$ is $\theta$-invariant,
$Z(\cP)$ is also $\theta$-invariant. Apply Corollary~\ref{cor:amenable-tensor}
to $Z(\cP)$.
\end{proof}
Combining these results, we can now prove Theorem~\ref{thm:main-tensor} from the introduction.
\begin{proof}[Proof of Theorem~\ref{thm:main-tensor}]
As the amenable radical of $\Gamma$ is trivial, Corollary~\ref{cor:diagonal-tensor-center} gives $Z(\cP)\subseteq\mathbb{C}1\bar{\otimes}\Linf$. By
Lemma~\ref{lem:tensor-center-valued-character}, $Z(\cP)=\mathbb{C}1\bar{\otimes} B$ for a $\Gamma$-invariant von
Neumann subalgebra $B\subseteq\Linf$. Theorem~\ref{thm:tensor-center-in-A},
including its one-point case, now gives
$\cP=L(N)\bar\otimes B$ for some $N\trianglelefteq\Gamma$.
\end{proof}

The following proposition shows that pointwise mixing does not suffice in
Theorem~\ref{thm:main-tensor}.
\begin{proposition}\label{prop:direct-augmentation}
Let $\F_n$ be the nonabelian free group on $2\leq n\leq\infty$ generators,
and let $V=\mathbb Z\F_n$, regarded as an additive group.
There is a subgroup $H<V\times\F_n$ with the following properties.
\begin{enumerate}
\item $H$ is invariant under $(v,s)\mapsto(gv,gsg^{-1})$ for every $g\in\F_n$;
\item $\{0\}\neq H\cap V\neq V$;
\item $H\neq H_1\times H_2$ for all subgroups $H_1\leq V$ and $H_2\leq\F_n$;
\item in the tensor coordinates $L(V\times\F_n)\cong L(\F_n)\bar\otimes L(V)$,
$L(H)\neq B_2\bar\otimes B_1$ for all von Neumann subalgebras
$B_1\subseteq L(V)=L^\infty(\mathbb T^{\F_n})$ and $B_2\subseteq L(\F_n)$.
\end{enumerate}
\end{proposition}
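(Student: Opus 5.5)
The plan is to copy the Bernoulli construction of Example~\ref{ex:relative-mixing-needed} at the level of groups. Identify $L(V\times\F_n)$ with $L(\F_n)\bar\otimes L(V)$ via $u_{(v,s)}\mapsto\lambda_s\otimes w_v$, where $w_v$ are the canonical unitaries of $L(V)$. Under this identification the diagonal action $\theta_g$ becomes the group automorphism $(v,s)\mapsto(gv,gsg^{-1})$, with $\F_n$ acting on $V=\Z\F_n$ by left translation. Fix a free generator $a$ of $\F_n$ and, for $h\in\F_n$, put $c_h=hah^{-1}$. Let $\delta_h\in V$ be the basis vector at $h$. Define
\[
H=\bigl\langle(\delta_h,c_h)\bigm| h\in\F_n\bigr\rangle\le V\times\F_n .
\]

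\textbf{Property (1).} The automorphism attached to $g$ sends the generator $(\delta_h,c_h)$ to $(\delta_{gh},c_{gh})$. So it permutes the generators and preserves $H$.

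\textbf{Property (2), nontriviality.} We have $c_h=c_{ha}$. Hence
\[
(\delta_h,c_h)(\delta_{ha},c_{ha})^{-1}=(\delta_h-\delta_{ha},e)\in H\cap V,
\]
and this element is nonzero.

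\textbf{Property (2), properness.} Define a homomorphism $\chi\colon V\times\F_n\to\Z$ by
\[
\chi(v,s)=\varepsilon(v)-\exp_a(s),
\]
where $\varepsilon$ is the augmentation (the sum of the coefficients) and $\exp_a$ is the $a$-exponent sum. Since $V\times\F_n$ is a direct product, $\chi$ is a homomorphism. Every generator satisfies $\chi(\delta_h,c_h)=1-1=0$, so $H\le\ker\chi$. But $\chi(\delta_e,e)=1$, hence $\delta_e\notin H\cap V$.

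\textbf{Property (3).} Suppose $H=H_1\times H_2$. The generator $(\delta_e,a)$ lies in $H$, so its components satisfy $\delta_e\in H_1$ and $a\in H_2$. Then $(\delta_e,e)\in H_1\times H_2=H$, contradicting $\chi(\delta_e,e)=1\neq0$.

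\textbf{Property (4).} Suppose $L(H)=B_2\bar\otimes B_1$.
\begin{enumerate}
\item The element $u_{(\delta_e,a)}=\lambda_a\otimes w_{\delta_e}$ lies in $B_2\bar\otimes B_1$.
\item Apply the normal slice map $\tau_{\F_n}(\lambda_a^*\,\cdot\,)\otimes\mathrm{id}$. It maps $B_2\bar\otimes B_1\subseteq L(\F_n)\bar\otimes B_1$ into $B_1$, so $w_{\delta_e}\in B_1$.
\item Since $1\in B_2$, this gives $u_{(\delta_e,e)}=1\otimes w_{\delta_e}\in L(H)$.
\item A group unitary $u_k$ lies in $L(H)$ only if $k\in H$: the trace-preserving expectation onto $L(H)$ kills $u_k$ for $k\notin H$, which is Fourier-support reasoning. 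So $(\delta_e,e)\in H$, contradicting $\chi(\delta_e,e)=1$.
\end{enumerate}

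\textbf{Main obstacle.} The only delicate point is the choice of $\chi$. It must vanish on all generators yet detect $\delta_e$. Combining the augmentation with the $a$-exponent sum, which is the integer analogue of the parity character in Example~\ref{ex:relative-mixing-needed}, does this. The argument works uniformly for $2\le n\le\infty$, since only the generator $a$ is used.
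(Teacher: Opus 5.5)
Your proof is correct. It differs from the paper's in the choice of subgroup and in how properties (3) and (4) are verified.

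\textbf{The subgroup.} Your detecting homomorphism $\chi(v,s)=\varepsilon(v)-\exp_a(s)$ is exactly the paper's $D$ with $s_1=a$. The paper, however, takes $H=\ker D$ itself. Property (1) is then immediate from the invariance of $\varepsilon$ and $\phi$. That choice also yields the explicit description $H\cap V=I$ and the isomorphism $(x,s)\mapsto(x+\phi(s)e,s)$ from $I\times\F_n$ onto $H$. The paper uses both in the remarks after the proposition: the failure of relative weak mixing over $L(I)$, and $L(H)\cong L(\F_n)\bar\otimes L(I)$ abstractly.

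Your $H=\langle(\delta_h,c_h)\mid h\in\F_n\rangle$ is the integer analogue of Example~\ref{ex:relative-mixing-needed}. It is a proper subgroup of $\ker\chi$: its projection to $\F_n$ lies in the normal closure of $a$. So, for a second free generator $b$, you get $(0,b)\in\ker\chi\setminus H$. You also do not identify $H\cap V$ explicitly. You only show it is nonzero and proper, which is all the statement requires.

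\textbf{Properties (3) and (4).} The paper uses surjectivity of both coordinate projections of $\ker D$. This forces $H_1=V$ and $H_2=\F_n$, respectively $B_1=L(V)$ and $B_2=L(\F_n)$. The witness $(0,s_1)\notin H$ then gives the contradiction.

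You instead use the single element $(\delta_e,a)\in H$ together with $(\delta_e,e)\notin H$. A slice map puts $w_{\delta_e}$ in $B_1$, hence $1\otimes w_{\delta_e}\in L(H)$. This avoids any surjectivity argument and works for any subgroup containing such a pair, so it is slightly more economical. The trade-off is that your construction does not give the clean structural description of $H$ that the paper exploits afterwards.
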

\begin{proof}
Let $\epsilon$ be the augmentation homomorphism from $V$ to $\mathbb Z$, given by
$\epsilon(\sum_s c_ss)=\sum_s c_s$, and let $I=\ker\epsilon$.
Choose a free generator $s_1$ and let $\phi$ be the homomorphism from $\F_n$ to $\mathbb Z$ that sends
$s_1$ to $1$ and every other free generator to $0$. Define the homomorphism $D$ from $V\times\F_n$ to $\mathbb Z$ by $D(v,s)=\epsilon(v)-\phi(s)$. Its kernel is $H=\{(\phi(s)e+x,s)\mid x\in I,s\in\F_n\}$,
where $e$ denotes the basis element of $V$ at the group identity.
Since $\epsilon(gv)=\epsilon(v)$ and
$\phi(gsg^{-1})=\phi(s)$, the kernel is invariant under the given
diagonal action. Its intersection with $V$ is $I$, which is nonzero
because $s_1-e\in I$ and is proper because $\epsilon(e)=1$.

The projection of $H$ to $\F_n$ is onto. Its projection to $V$ is also
onto, since for $v\in V$ we can choose $s$ with $\phi(s)=\epsilon(v)$.
But $(0,s_1)\notin H$. If $H=H_1\times H_2$, the two projections would
force $H_1=V$ and $H_2=\F_n$, contradicting this last fact. This proves (3).

Suppose that $L(H)=B_2\bar\otimes B_1$. For each $(v,s)\in H$, the unitary
$\lambda_s\otimes\lambda_v$ belongs to $B_2\bar\otimes B_1$.
Applying the normal slice maps defined by
$z\mapsto\tau_{\F_n}(\lambda_s^*z)$ and
$z\mapsto\tau_V(\lambda_v^*z)$ gives $\lambda_v\in B_1$ and $\lambda_s\in B_2$.
Surjectivity of the two group projections therefore implies
$B_1=L(V)$ and $B_2=L(\F_n)$. It follows that
$\lambda_{(0,s_1)}\in L(H)$, which is impossible because the Fourier expansion of every
element of $L(H)$ is supported in $H$. This proves (4).
\end{proof}

The action on $L(V)=L^\infty(\mathbb T^{\F_n})$ is the Bernoulli action
with product Haar measure. For every $g\neq e$, the powers of $g$
eventually move any finite set of coordinates off any other finite set.
Thus, $\sigma_g$ is mixing, first for functions depending on finitely
many coordinates and then for all $L^2$ functions by density.
With $I$ and $e$ as in the proof, the action is not weakly mixing relative
to the proper $\F_n$-factor $L(I)$. Indeed, $\mathbb{E}_{L(I)}(\lambda_e)=0$, whereas
$\lambda_e^*\sigma_g^j(\lambda_e)=\lambda_{g^j-e}\in L(I)$ has $L^2$ norm
one for every $g\neq e$ and $j\geq1$.

The map $(x,s)\mapsto(x+\phi(s)e,s)$ is an isomorphism from
$I\times\F_n$ onto $H$. Thus, $L(H)\cong L(\F_n)\bar\otimes L(I)$
abstractly, and the failure in the example concerns the given tensor
coordinates.

\section{Examples of weakly mixing actions}\label{sec:prime}
We construct prime and non-prime actions satisfying the hypotheses of
both Theorems~\ref{thm:main} and~\ref{thm:main-tensor}.
Throughout this section, $(X,\nu)$ is a standard nonatomic probability
space. Recall that a p.m.p.\ action is \emph{prime} if its only
factors
are the trivial factor and the whole system; here a \emph{factor} of an action $\rho$ of $G$ on $(X,\nu)$ means a $\rho$-invariant
$\sigma$-subalgebra of $\MALG_\nu$ (the measure algebra of $(X,\nu)$), and equality of factors, like every
other equality below, is understood modulo null sets. By \emph{atoms} of a Boolean algebra we mean its nonempty minimal members.

Note that an action of a free group is pointwise weakly mixing if and only if it is \emph{totally
weakly mixing} in the sense of \cite{kechris2010global}. %if its restriction to every infinite subgroup is weakly
%mixing. For a free group, total weak mixing can be tested element by
%element.

\begin{comment}
\begin{lem}\label{lem:twm-elementwise}
Let $2\leq n\leq\infty$ and let $\rho$ be a p.m.p.\ action of
$\F_n$ on $(X,\nu)$. Then $\rho$ is
totally weakly mixing if and only if $\rho(g)$ is a weakly mixing
transformation for every $g\in\F_n\setminus\{e\}$.
\end{lem}

\begin{proof}
If $\rho$ is totally weakly mixing and $g\neq e$, then $\langle g\rangle$
is infinite because $\F_n$ is torsion-free. Hence $\rho|_{\langle g\rangle}$
is weakly mixing, which means that the transformation $\rho(g)$ is weakly
mixing.

Conversely, assume that $\rho(g)$ is weakly mixing for every $g\neq e$, and
let $H\leq\F_n$ be infinite. If $\rho|_H$ were not weakly mixing, then the
Koopman representation of $H$ on $L^2_0(X,\nu)$ would contain a nonzero
finite-dimensional invariant subspace. Choosing $h\in H\setminus\{e\}$,
this subspace would be invariant under the operator induced by $\rho(h)$,
hence, being finite-dimensional, it would contain an eigenvector of
that operator, contradicting weak mixing of the transformation $\rho(h)$.
\end{proof}

\end{comment}

We shall first construct an essentially free, prime, totally weakly mixing action of
$\F_2$, adopting the arguments appearing in \cite[Section 17]{kechris2010global} and in \cite{wrobel2026orbit}. The actions of all the other free groups will then be obtained by
restriction to suitable free subgroups of $\F_2$ containing the first
generator.

\subsection{A generic second generator}

We begin with the precise version of the generic-freeness argument needed
below. We follow the argument used in
\cite[proof of~17.2, and Lemmas~17.3 and~17.4]{kechris2010global}
and adapt it to our purposes. It goes through as it is; we add the details
for completeness and clarity nonetheless.
The two lemmas of Kechris are recorded below as
Lemmas~\ref{lem:disjointization} and~\ref{lem:local-modification}, the
second in a slightly strengthened form.

Let $\MALG_\nu$ be the measure algebra of $(X,\nu)$, endowed with the metric
$d(A,B)=\nu(A\triangle B)$. The weak topology on $\Aut(X,\nu)$ is the
smallest topology for which, for every $A\in\MALG_\nu$, the map
$R\mapsto R(A)$ is continuous. Thus, a basic weak neighbourhood of
$R_0\in\Aut(X,\nu)$ has the form
\begin{equation*}
 \cN(R_0;C_1,\ldots,C_q;\kappa)
 =\bigl\{R\mid \nu(R(C_j)\triangle R_0(C_j))<\kappa,
                  1\leq j\leq q\bigr\},
\end{equation*}
where $C_1,\ldots,C_q\in\MALG_\nu$ and $\kappa>0$. With this topology,
$\Aut(X,\nu)$ is a Polish topological group, and the sets above do form
a basis for it; see \cite[Section~1(B)]{kechris2010global}. When we talk about open sets inside $\Aut(X,\nu)$ we will always mean the weak topology. We shall also occasionally use the
uniform metric
 $\du(R,S)=\nu\bigl(\{x\in X\mid R(x)\neq S(x)\}\bigr)$.
All equalities between transformations and measurable sets are understood
modulo null sets. When a pointwise argument is used, Borel representatives
are chosen and one common null set is discarded.
For $R\in\Aut(X,\nu)$, we record the identity
\begin{equation}\label{eq:du-fix}
 \du(R,\id)=\nu(\text{supp}(R))=1-\nu(\text{Fix}(R)),
\end{equation}
for which see \cite[Section~1(C), p.~3]{kechris2010global}; thus,
$\du(R,\id)=1$ if and only if $\nu(\text{Fix}(R))=\nobreak0$.

The following is Kechris's disjointization lemma, written in the form that
will be used below.

\begin{lem}
\label{lem:disjointization}
Let $P_1,\ldots,P_k\in\Aut(X,\nu)$, and let $D\subseteq X$ be a Borel set
such that $P_1(x),\ldots,P_k(x)$ are pairwise distinct for every $x\in D$.
If $0\leq a<\nu(D)$, then there are pairwise disjoint Borel sets
$A_1,\ldots,A_M\subseteq D$ such that
$\nu(A_1\cup\cdots\cup A_M)>a$ and, for every $r=1, \ldots,M$, the sets
$P_1(A_r),\ldots,P_k(A_r)$ are pairwise disjoint.
\end{lem}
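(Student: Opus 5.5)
The plan is a greedy exhaustion driven by a local lemma. First I would prove the local version: every set $E\subseteq D$ of positive measure contains a Borel set $A$ of positive measure with $P_1(A),\ldots,P_k(A)$ pairwise disjoint. Since $D$ is standard Borel, I fix a countable family $\{U_m\}$ of Borel sets separating points. For $x\in E$ and each pair $i<j$, the points $P_i(x)\neq P_j(x)$ are separated by some $U_m$: one lies in $U_m$ and the other does not. Let $E_{i,j,m}$ be the set of $x\in E$ with $P_i(x)\in U_m$ and $P_j(x)\notin U_m$, and $E'_{i,j,m}$ the set with the roles of $P_i,P_j$ swapped. Then $E$ is the union, over all choices of one index $m_{ij}$ and one orientation for each pair $i<j$, of the intersections of the corresponding sets. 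There are countably many such choices, so some intersection $A$ has positive measure. For $x,y\in A$ and $i<j$, $P_i(x)$ and $P_j(y)$ fall on opposite sides of $U_{m_{ij}}$, so they differ. Hence $P_i(A)\cap P_j(A)=\varnothing$ up to null sets, after discarding one common null set as the paper allows.

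Next I would exhaust. I choose maximal disjoint families by transfinite or countable recursion. At each step I take $A_{r+1}$ inside $D\setminus(A_1\cup\cdots\cup A_r)$ with measure at least half the supremum of the measures of admissible sets available there. The local lemma shows that this supremum is positive whenever the leftover has positive measure.

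This yields a countable disjoint family $(A_r)_r$ with $P_1(A_r),\ldots,P_k(A_r)$ pairwise disjoint for each $r$. I claim $\nu\bigl(D\setminus\bigcup_r A_r\bigr)=0$. The measures $\nu(A_r)$ are summable, so they tend to $0$. If the leftover $L$ had positive measure, the local lemma would give an admissible $A\subseteq L$ of positive measure. Then at every step $r$ the supremum would be at least $\nu(A)$, forcing $\nu(A_{r+1})\ge\nu(A)/2$ for all $r$, which contradicts summability.

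Therefore $\sum_r\nu(A_r)=\nu(D)>a$, and some finite $M$ has $\nu(A_1\cup\cdots\cup A_M)>a$. I expect the main obstacle to be the measurability bookkeeping in the local step: ensuring the sets $E_{i,j,m}$ are Borel, which holds because the $P_i$ have Borel representatives, and that "disjoint" holds modulo null sets consistently. Both follow by choosing Borel representatives once and discarding a single null set.
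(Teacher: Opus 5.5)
Your proof is correct, and its core is the same as the paper's: a countable separating family yields countably many ``type'' sets covering $D$, and on each of them $P_1,\ldots,P_k$ have pairwise disjoint images. The paper encodes a type as a $k$-tuple of pairwise disjoint sets $V_1,\ldots,V_k$ from a countable Boolean algebra, with $U_{\mathbf V}=D\cap\bigcap_iP_i^{-1}(V_i)$. You encode it as a separating set plus an orientation for each pair $i<j$, which avoids needing closure under Boolean operations. One small correction: the family must separate points of $X$, not merely of $D$, since the points $P_i(x)$ need not lie in $D$.

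The routes differ in how the cover is turned into a disjoint family. The paper notes that the type sets already cover all of $D$. Since admissibility passes to subsets, it simply disjointifies them via $B_r=U_r\setminus\bigcup_{j<r}U_j$ and takes a long enough initial segment. You instead use the cover only to show that every positive-measure $E\subseteq D$ contains a positive-measure admissible set, and then run a greedy half-supremum exhaustion.

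Your exhaustion is valid: summability forces the suprema to $0$, while a non-null leftover would keep them bounded below. In this setting, though, it is redundant. Your local step applied with $E=D$ already gives a countable admissible cover of $D$, and disjointification finishes at once.

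The exhaustion argument buys generality: it needs only the local statement and heredity, not a global countable cover. The paper's argument is shorter, fully explicit, and partitions $D$ exactly rather than only up to a null set.
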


\begin{proof}
Because $X$ is a standard Borel space, there is a countable Boolean algebra
$\mathcal A$ of Borel sets which separates points. For example, after
identifying $X$ with a Borel subset of $[0,1]$, one may pull back the algebra
of finite unions of intervals with rational endpoints.

Fix $x\in D$. Since $P_1(x),\ldots,P_k(x)$ are distinct, for every pair of
these points choose a member of $\mathcal A$ which contains one and not the
other. The finitely many chosen sets generate a finite Boolean algebra, and
the points $P_1(x),\ldots,P_k(x)$ lie in distinct atoms of that finite
algebra. %Since $\mathcal A$ is a Boolean algebra, those atoms belong to
%$\mathcal A$. 
Hence there exist pairwise disjoint sets
$V_1,\ldots,V_k\in\mathcal A$ with $P_i(x)\in V_i$ for every $i=1, \ldots,k$.

For every pairwise disjoint $k$-tuple
$\mathbf V=(V_1,\ldots,V_k)$ from $\mathcal A$, let
 $U_{\mathbf V}=D\cap\bigcap_{i=1}^k P_i^{-1}(V_i)$.
There are only countably many such tuples, these sets cover $D$, and
$P_i(U_{\mathbf V})\subseteq V_i$; therefore the images of each
$U_{\mathbf V}$ are pairwise disjoint Borel sets. Enumerate the nonempty sets in this
cover as $U_1,U_2,\ldots$ and let $B_1=U_1$ and
$B_r=U_r\setminus\bigcup_{j<r}U_j$ for $r\geq2$.
Then $D=\bigsqcup_{r\geq1}B_r$, and the images
$P_1(B_r),\ldots,P_k(B_r)$ are pairwise disjoint for every $r$. Since
$\nu(\bigcup_{r=1}^N B_r)\to\nu(D)>a$, a sufficiently large finite initial
union has measure greater than $a$. Discarding its null members gives the
required sets.
\end{proof}

For a measurable set $E$, when we say that an automorphism $P\in \Aut(X,\nu)$
\emph{preserves} $E$ we mean that $P(E)=E$ modulo null sets. We also define the \emph{support} of $P$ as $\text{supp}(P)=\{x\in X:P(x)\neq x\}$.

\begin{lem}\label{lem:local-modification}
Let $R\in\Aut(X,\nu)$, let $D\subseteq X$ be Borel, and let
$\mathcal C$ be a finite family of Borel sets. There is an involution
$P\in\Aut(X,\nu)$ such that
\begin{enumerate}[label=\textup{(\roman*)}]
\item $\text{supp}(P)=D$ modulo null sets;
\item $P$ preserves every member of the finite Boolean algebra generated by
\[
 \{D\}\cup\mathcal C
 \cup\{R(C)\mid C\in\mathcal C\}
 \cup\{R^{-1}(C)\mid C\in\mathcal C\}.
\]
\end{enumerate}
Consequently, for both $R_+=RP$ and $R_-=PR$ one has
 $R_\pm(C)=R(C)$,
 $R_\pm^{-1}(C)=R^{-1}(C)$ for every $C\in\mathcal C$.
\end{lem}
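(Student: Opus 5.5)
Let $\mathcal B$ be the finite Boolean algebra generated by $\{D\}\cup\mathcal C\cup R(\mathcal C)\cup R^{-1}(\mathcal C)$, and let $E_1,\ldots,E_L$ be its atoms of positive measure. These atoms partition $X$ up to a null set. The idea is to build $P$ atom by atom, so that $P$ preserves each atom; then it preserves every member of $\mathcal B$, since each such member is a finite union of atoms.

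On an atom $E_i$ with $E_i\cap D=\varnothing$ (up to null sets), set $P=\id$. Every other atom satisfies $E_i\subseteq D$, because $D\in\mathcal B$. On such an atom, $\nu|_{E_i}$ is a nonatomic finite measure on a standard Borel space. Hence there is a measure-preserving isomorphism $\psi_i$ from $E_i$ onto $[0,\nu(E_i))$ with Lebesgue measure. On the interval, take the involution $t\mapsto t+\nu(E_i)/2$ for $t<\nu(E_i)/2$ and $t\mapsto t-\nu(E_i)/2$ otherwise. It has no fixed points. Let $P_i$ be its conjugate by $\psi_i$, and put $P=P_i$ on $E_i$.

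Gluing these pieces gives a measure-preserving involution $P$ of $X$ with $P(E_i)=E_i$ for all $i$. It fixes every point outside $D$ and moves almost every point of $D$, which gives~(i). Condition~(ii) holds because $P$ preserves each atom. The only slightly delicate point is discarding null sets consistently when gluing, which is routine.

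For the consequence, take $C\in\mathcal C$. Since $C$ and $R(C)$ lie in $\mathcal B$, we get
\[
R_+(C)=R(P(C))=R(C),\qquad R_-(C)=P(R(C))=R(C).
\]
For the inverses, $P^{-1}=P$ and $R^{-1}(C)\in\mathcal B$ give
\[
R_+^{-1}(C)=P(R^{-1}(C))=R^{-1}(C),\qquad R_-^{-1}(C)=R^{-1}(P(C))=R^{-1}(C).
\]
There is no real obstacle here. The only point to check is that each atom inside $D$ is nonatomic as a measure space, so that a fixed-point-free involution on it exists; this holds because $(X,\nu)$ is standard and nonatomic.
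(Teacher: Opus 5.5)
Your proposal is correct and is essentially the paper's argument: atoms of the Boolean algebra lie either inside $D$ or disjoint from it, $P$ swaps two equal-measure halves of each positive-measure atom inside $D$ and is the identity elsewhere, and the consequence is the same four-line computation. Your half-translation on $[0,\nu(E_i))$ is just a concrete realization of the paper's swap of $E^0$ and $E^1$ via a measure-preserving isomorphism $\psi_E$.
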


\begin{proof}
The displayed family is finite, hence it generates a finite Boolean algebra.
Atoms of this algebra arise as intersections obtained by choosing, for each generator, either the set itself or its complement. Since $D$ is one of the generators, every atom
is either contained in $D$ or disjoint from $D$.

Let $E$ be a positive-measure atom contained in $D$. The restriction of
$\nu$ to $E$ is nonatomic, so we can choose a Borel partition
$E=E^0\sqcup E^1$ with $\nu(E^0)=\nu(E^1)$. The two standard nonatomic
finite measure spaces $E^0$ and $E^1$ are measure-isomorphic; this follows,
for example, from the standard isomorphism theorem recalled in
\cite[Section~1(A)]{kechris2010global}. Choose a measure-preserving Borel
isomorphism $\psi_E$ from $E^0$ to $E^1$ modulo null sets. Define $P$ to be
$\psi_E$ on each $E^0$, $\psi_E^{-1}$ on each $E^1$, and the identity outside the
union of the positive-measure atoms contained in $D$. Then $P$ is a
measure-preserving involution which has no fixed point on $D$ modulo null
sets. % because $E^0$ and $E^1$ are disjoint and, $D$ being a union of
%generators' atoms, $D$ coincides modulo null sets with the union of the
%positive-measure atoms contained in it, 
Furthermore, it preserves every atom, hence
every member, of the finite Boolean algebra in question. In particular, for
$C\in\mathcal C$, we have
$ P(C)=C$, $P(R(C))=R(C)$, $P(R^{-1}(C))=R^{-1}(C)$.
Hence $R_+(C)=R(P(C))=R(C)$ and
$R_+^{-1}(C)=P(R^{-1}(C))=R^{-1}(C)$, and likewise
$R_-(C)=P(R(C))=R(C)$ and $R_-^{-1}(C)=R^{-1}(P(C))=R^{-1}(C)$.
The claim follows.
\end{proof}

\begin{remark}
The sets $R(C)$, $C\in\mathcal C$, are not needed for the conclusion about
$R_+=RP$, and are absent from
\cite[Lemma~17.4]{kechris2010global}; they are included here because the
conclusion about $R_-=PR$ is used in the case $\varepsilon=-1$ of
Proposition~\ref{prop:remove-fixed-points}.
\end{remark}

In what follows we shall consider p.m.p.\ actions of $\F_2$. Denote the generators of $\F_2$ by $a,b$. Given $S,T \in \Aut(X,\nu)$, denote by $\alpha^{T,S}$ the action of $\F_2$ on $(X,\nu)$ determined by the equalities $\alpha^{T,S}(a) = T$, $\alpha^{T,S}(b) = S$. If $v \in \F_2$, we will simply write $v^{T,S}:=\alpha^{T,S}_v \in \Aut(X,\nu)$.

%We use the usual convention for products of transformations. If
%$v=\ell_1\cdots\ell_n$ is a word, then
%$v^S=\ell_1^S\circ\cdots\circ\ell_n^S$, so the rightmost letter acts first.
If $n \in \N$ and $v=\ell_1\cdots\ell_n\in\F_2$ is a reduced word of length $n$, define the terminal subwords of $v$ by $v_0=e$ and
$v_i=\ell_{n-i+1}\cdots\ell_n$ for $1\leq i\leq n$.
%Thus in the situation $v_i^S(x)$ is the point reached after the first $i$ letters, read from
%the right, have acted on $x$. %Powers such as $b^k$ are expanded into
%$|k|$ consecutive letters $b$ or $b^{-1}$.
The conventions above  are in force throughout the section. The next claim is the only local surgery needed in the proof. We shall use without any further comment that the elements of $\Aut(X,\nu)$ are injective (modulo null sets).

\begin{proposition}
\label{prop:remove-fixed-points}
Fix an aperiodic transformation $T\in\Aut(X,\nu)$ and let $S\in\Aut(X,\nu)$. 
%Let $\F_2=\langle a,b\rangle$, let $S\in\Aut(X,\nu)$, and evaluate words by
%$a\mapsto T$, $b\mapsto S$. 
Let $n \in \N$ and let $v\in \F_2$ be a reduced word of length $n$ in
which $b^{\pm1}$ occurs. Suppose that $A\subseteq X$ is Borel and
$v_0^{T,S}(A),v_1^{T,S}(A),\ldots,v_{n-1}^{T,S}(A)$ are pairwise disjoint. Given a finite family $\mathcal C$ of Borel subsets of $X$,
there is $S'\in\Aut(X,\nu)$ such that
\begin{enumerate}[label=\textup{(\roman*)}]
\item $S'^{\pm1}(C)=S^{\pm1}(C)$ for every $C\in\mathcal C$;
\item $\nu(A\cap\text{Fix}(v^{T,S'}))=0$.
\end{enumerate}
\end{proposition}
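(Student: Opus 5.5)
Throughout, write $v^S$ for $v^{T,S}$ and $v_i^S$ for $v_i^{T,S}$. The plan is the Kechris local surgery: replace $S$ by $S'=SP$ or $S'=PS$ with an involution $P$ from Lemma~\ref{lem:local-modification}, supported on a suitably chosen set. Condition~(i) is then automatic from that lemma, applied with $R=S$ and the given family $\mathcal C$. The whole task is to choose the support $D$ of $P$ so that~(ii) holds.

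First choose where to operate. Let $i$ be the smallest index such that the $(i+1)$-st letter from the right, $\ell_{n-i}$, is $b^{\varepsilon}$ with $\varepsilon\in\{1,-1\}$. Then $v_i$ is a power of $a$, so $v_i^{S}=T^{m}$ does not depend on $S$; in particular it does not change when $S$ is modified.
\begin{itemize}
\item If $\varepsilon=1$, take $S'=SP$ with $D=v_i^{S}(A)=T^m(A)$. The letter $b$ then acts at $y=v_i^S(x)$ as $S'(y)=S(P(y))$.
\item If $\varepsilon=-1$, take $S'=PS$, so that $S'^{-1}=S^{-1}P$. Again the letter acts on $y$ by first applying $P$, and $D$ is the same set. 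This is exactly why $R(C)$ was added in Lemma~\ref{lem:local-modification}.
\end{itemize}
In both cases, $P$ moves every point of $D$.

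The key step is to compare the orbits under $S'$ and $S$. Since $P$ is supported in $D=v_i^S(A)$, and the sets $v_j^S(A)$ for $j<n$ are pairwise disjoint, the partial trajectory $x, v_1^{S'}(x),\dots$ for $x\in A$ should meet $D$ only at step $i$ before diverging. The trouble is that after the modification the later points $v_j^{S'}(x)$ with $j>i$ are no longer the old points $v_j^S(x)$. They could re-enter $D$, and once they do they can no longer be controlled by the disjointness hypothesis.

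To get around this, I would not take $D$ to be the full set $v_i^S(A)$. Instead, I would first use Lemma~\ref{lem:disjointization}, or a preliminary shrinking argument, to split $A$ into countably many pieces. On each piece I would choose $P$ so that the image $P(y)$ lands in a region whose subsequent trajectory under the tail word avoids returning to $x$. Concretely, I would arrange that $v^{S'}(x)\neq x$ by making $P$ move $y$ inside a small atom. I would then exploit that fixing $x$ would force $P(y)$ to be a specific point determined injectively by $x$, namely the inverse tail applied to $x$. If instead $P(y)$ lies in a set disjoint from that image, no fixed point occurs.

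For the case analysis, when the tail after step $i$ contains further occurrences of $b^{\pm1}$ whose trajectory passes through $D$, I would handle it by an induction on the number of such occurrences. Alternatively, I would choose $P$ so that $P(D')$ avoids the finitely many sets $v_j^{S}(A)$ for $j>i$, by refining the Boolean algebra used in Lemma~\ref{lem:local-modification}. I expect this control of re-entries into the modified region to be the main obstacle; making it rigorous is where most of the work lies.
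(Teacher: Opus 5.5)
\textbf{There is a genuine gap, and it comes from operating at the wrong occurrence of $b^{\pm1}$.} You choose the \emph{first-acting} (rightmost) occurrence. That makes the path up to the surgery trivially independent of $S$, but the tail after it may still contain letters $b^{\pm1}$, which are now evaluated with $S'$.

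\begin{itemize}
\item After the surgery, the later points are no longer the old points $v_j^S(x)$. The disjointness hypothesis therefore says nothing about whether the new trajectory re-enters $D$ or where it ends. You flag this as the main obstacle but do not resolve it.
\item Your suggested fix does not address it. The involution from Lemma~\ref{lem:local-modification} preserves every atom of a Boolean algebra containing $D$, so $P(D)=D\subseteq v_i^S(A)$, which is already disjoint from $v_j^S(A)$ for $j\neq i$. The uncontrolled objects are the new points $S(P(y))$, and so on, not $P(y)$.
\item Taking $D$ to be all of $v_i^S(A)$ is itself a problem. Every $x\in A$ gets rerouted, including those with $v^S(x)\neq x$, and nothing in your argument prevents some of them from becoming fixed by $v^{S'}$.
\end{itemize}

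\textbf{The missing idea is to reverse the choice.} Take the \emph{leftmost} (last-acting) occurrence, i.e.\ the maximal $i_0$ with $v_{i_0+1}=b^{\varepsilon}v_{i_0}$. Then $v=q\,b^{\varepsilon}v_{i_0}$ with $q$ a word in $a^{\pm1}$ only, so $q^{S'}=q^S=T^m$ is bijective and unaffected by the modification.

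Set $F=A\cap\mathrm{Fix}(v^S)$ and $D=v_{i_0}^S(F)$, not all of $v_{i_0}^S(A)$. Take $P$ from Lemma~\ref{lem:local-modification} with support $D$, and put $S'=SP$ if $\varepsilon=1$ and $S'=PS$ if $\varepsilon=-1$.

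Now the difficulty sits in the prefix, where the hypothesis does control it. For $x\in A$ and $j<i_0$, the point $v_j^S(x)$ lies in $v_j^S(A)$, which is disjoint from $D\subseteq v_{i_0}^S(A)$. By induction, $v_j^{S'}(x)=v_j^S(x)$ for all $j\le i_0$. The only delicate case is a letter $b^{-\varepsilon}$ immediately to the right of the selected $b^{\varepsilon}$, where $P$ would act after $S^{\mp1}$ and could land in $D$. This case is excluded because $v$ is reduced.

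It then remains to check the two kinds of points in $A$:
\begin{itemize}
\item For $x\in F$, let $y=v_{i_0}^S(x)\in D$. Since $P(y)\neq y$ and $T^mS^{\varepsilon}$ is injective, $v^{S'}(x)=T^mS^{\varepsilon}(Py)\neq T^mS^{\varepsilon}(y)=x$.
\item For $x\in A\setminus F$, the point $v_{i_0}^S(x)$ lies outside $D$ and is fixed by $P$, so $v^{S'}(x)=v^S(x)\neq x$.
\end{itemize}
Condition~(i) then follows from Lemma~\ref{lem:local-modification}, exactly as you said.
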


\begin{proof}
Choose a maximal $i_0\in \{0, \ldots,n-1\}$ such that
$ v_{i_0+1}=b^\varepsilon v_{i_0}$ for some $\varepsilon\in\{1,-1\}$.
This corresponds to the leftmost occurrence of $b^{\pm1}$ in $v$.
%$v=\ell_1\cdots\ell_n$. 
Consequently, every letter which appears before this
selected occurrence is $a$ or $a^{-1}$. Let
 $F=A\cap\text{Fix}(v^{T,S})$, and
 $D=v_{i_0}^{T,S}(F)$.
 
If $\nu(F)=0$, it suffices to take $S'=S$. Suppose that $\nu(F)>0$, and apply
Lemma~\ref{lem:local-modification} to $S$, $D$, and $\mathcal C$. Let $P$
be the resulting involution with support $D$.
Note that $D\subseteq v_{i_0}^{T,S}(A)$, so that $D$ is disjoint from
$v_i^{T,S}(A)$ for every $i<n$ with $i\neq i_0$.

Suppose first that $\varepsilon=1$ and set $S'=SP$. We first claim that,
for $x\in A$ and $0\leq i\leq i_0$,
\begin{equation}\label{eq:path-before-selected-positive}
 v_i^{T,S'}(x)=v_i^{T,S}(x).
\end{equation}
We argue by induction on $i$. For $i=0$ both sides equal $x$. Assume
the equality holds at some $i<i_0$,
and let $z=v_i^{T,S'}(x)=v_i^{T,S}(x)$. If the next letter appearing on the left is $a^{\pm1}$, the
next step is unchanged. If it is $b$, then
$S'(z)=S(Pz)=S(z)$ because
$z\in v_i^{T,S}(A)$ and this set is disjoint from
$D\subseteq v_{i_0}^{T,S}(A)$. If the next letter is $b^{-1}$, then
$v_{i+1}^{T,S'}(x)=S'^{-1}(z)=P(S^{-1}z)=P(v_{i+1}^{T,S}(x))$.
When $i+1<i_0$, the point $v_{i+1}^{T,S}(x)$ lies outside $D$ by the same
pairwise disjointness. The only remaining possibility is $i+1=i_0$; but
then the letter immediately to the right of the selected $b$ would be
$b^{-1}$, contradicting the fact that $v$ is reduced. Hence the last expression is
$S^{-1}z$, and equation~\eqref{eq:path-before-selected-positive} follows.

Write $v=qb^{\varepsilon}v_{i_0}$, where
$q=\ell_1\cdots\ell_{n-i_0-1}$. By the maximality of $i_0$, the word $q$
involves only letters $a^{\pm1}$, so that $q^{T,S'}=q^{T,S}=T^{m}$ for some $m\in\Z$;
in particular this transformation does not depend on the second generator.
%and it is injective.

Take $x\in F$ and let $y=v_{i_0}^{T,S}(x)$. Note that $y\in D$, so that
$P(y)\neq y$. Since $S$ is injective, it follows that
$S'(y)=S(P(y))\neq S(y)$. By equation~\eqref{eq:path-before-selected-positive} we
have $v_{i_0}^{T,S'}(x)=y$, and therefore
$v^{T,S'}(x) =q^{T,S}\bigl(S'(y)\bigr) \neq q^{T,S}\bigl(S(y)\bigr) =v^{T,S}(x) =x$.

If $x\in A\setminus F$, then $v_{i_0}^{T,S}(x)\notin D$, since
$D=v_{i_0}^{T,S}(F)$. Hence
$P\bigl(v_{i_0}^{T,S}(x)\bigr)=v_{i_0}^{T,S}(x)$.
By equation~\eqref{eq:path-before-selected-positive},
$v_{i_0}^{T,S'}(x)=v_{i_0}^{T,S}(x)$, and therefore
$S'\bigl(v_{i_0}^{T,S'}(x)\bigr) = S\bigl(P(v_{i_0}^{T,S}(x))\bigr) = S\bigl(v_{i_0}^{T,S}(x)\bigr)$.
Since the remaining letters of $v$ are those of $q$, it follows that
$v^{T,S'}(x)=v^{T,S}(x)\neq x$. This proves (ii) when $\varepsilon=1$.

The case $\varepsilon=-1$ reduces to the case $\varepsilon=1$ by replacing
the second generator $b$ with $b^{-1}$ and $S$ with $S^{-1}$. More precisely,
let $\sigma$ be the automorphism of $\F_2$ determined by setting $\sigma(a)=a$ and
$\sigma(b)=b^{-1}$, and let $\widehat S=S^{-1}$ and $w=\sigma(v)$. Then $w^{T,\widehat{S}}=v^{T,S}$ and
$w_i^{T,\widehat{S}}=v_i^{T,S}$ for $0\leq i < n$.
As $w$ is a reduced word of length $n$ in which $b^{\pm1}$ occurs, the
hypothesis on $A$ holds for $w$ and $\widehat S$. The leftmost occurrence
of $b^{\pm1}$ in $w$ is at the same place as in $v$ and has exponent $+1$,
and the sets $F$ and $D$ attached to $w$ and $\widehat S$ are the same as the ones
attached to $v$ and $S$. Finally, the finite family displayed in
Lemma~\ref{lem:local-modification} is the same for $\widehat S$ as for $S$,
because $\widehat S^{\pm1}(C)=S^{\mp1}(C)$; so the same involution $P$
occurs. The case already treated yields
$\widehat S'=\widehat S P=S^{-1}P$ with
$\nu\bigl(A\cap\text{Fix}(w^{T,\widehat S'})\bigr)=0$. Setting
 $S'=(\widehat S')^{-1}=PS$,
and noting that $w^{T,\widehat S'}=v^{T,S'}$ for the same reason as above, we
obtain $S'^{\pm1}(C)=S^{\pm1}(C)$ for every $C\in\mathcal C$ and
$\nu\bigl(A\cap\text{Fix}(v^{T,S'})\bigr)=0$.
Assertion (i) follows from the final claim of
Lemma~\ref{lem:local-modification}, applied to $R_+=SP$ when
$\varepsilon=1$ and to $R_-=PS$ when $\varepsilon=-1$.

\end{proof}

We shall use repeatedly the following
observation, whose hypothesis is exactly what condition~(i) of
Proposition~\ref{prop:remove-fixed-points} provides.
Let $v \in \F_2$ be a reduced word of length $n$, let $S, S', T \in \Aut(X,\nu)$, and let $B\subseteq X$. Set
$B_i=v_i^{T,S}(B)$ for $i=0, \ldots, n$. If
 $S'^{\pm1}(B_i)=S^{\pm1}(B_i)$ for $0\leq i<n$,
then an induction over the successive letters of $v$ gives, for $0\leq i\leq n$,
\begin{equation}\label{eq:path-sets-preserved}
 v_i^{T,S'}(B)=v_i^{T,S}(B).
\end{equation}
For an $a^{\pm1}$-letter there is nothing to check; for a $b$-letter use
$S'(B_i)=S(B_i)$, and for a $b^{-1}$-letter use
$S'^{-1}(B_i)=S^{-1}(B_i)$.

\begin{proposition}
\label{prop:generic-second-generator}
Let $T\in\Aut(X,\nu)$ be aperiodic.
%let
%$\alpha^S$ be the action of $\F_2$ on $(X,\nu)$ determined by
% $\alpha^S(a)=T$, and
% $\alpha^S(b)=S$.
Then the set of $S$ for which $\alpha^{T,S}$ is essentially free is comeager in
$\Aut(X,\nu)$. If $T$ is ergodic, every such $\alpha^{T,S}$ is ergodic.
\end{proposition}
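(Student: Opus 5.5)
The plan is a Baire category argument, written as a countable intersection of open dense sets. For each reduced word $v\in\F_2\setminus\{e\}$ and each $k\in\N$, let
\[
\mathcal U_{v,k}=\bigl\{S\in\Aut(X,\nu)\bigm|\du(v^{T,S},\id)>1-1/k\bigr\}.
\]
By \eqref{eq:du-fix}, $\alpha^{T,S}$ is essentially free exactly when $S\in\bigcap_{v,k}\mathcal U_{v,k}$. So it suffices to show that each $\mathcal U_{v,k}$ contains a dense $G_\delta$ set; for example, it is enough to show that it is open and dense.

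\emph{Openness.} The map $S\mapsto v^{T,S}$ is weakly continuous, since $\Aut(X,\nu)$ is a topological group. The map $R\mapsto\du(R,\id)=\nu(\mathrm{supp}\,R)$ is weakly lower semicontinuous, following \cite[Section~1]{kechris2010global}. Together these make $\mathcal U_{v,k}$ open.

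\emph{Density and the reduction to words containing $b$.} If $v=a^m$ with $m\neq0$, then aperiodicity of $T$ gives $\du(T^m,\id)=1$, so $\mathcal U_{v,k}$ is everything. Now suppose $v$ contains $b^{\pm1}$. Fix $S$ and a basic neighbourhood $\cN(S;C_1,\dots,C_q;\kappa)$; we must find $S'$ in this neighbourhood with $\nu(\mathrm{Fix}(v^{T,S'}))<1/k$.

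\emph{Construction of $S'$.} First I would replace $S$ by a nearby $S_0$ for which the points $v_0^{T,S_0}(x),\dots,v_{n-1}^{T,S_0}(x)$ are pairwise distinct on a set $D$ of measure $>1-1/(2k)$. I expect this is possible because aperiodic/free-ish $S$ are dense, and I would handle it by induction on subwords: conjugates of an aperiodic map are dense and generic pairs are locally free. This step is the one I regard as the main obstacle; see below.

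Next, Lemma~\ref{lem:disjointization} gives disjoint $A_1,\dots,A_M\subseteq D$ with total measure $>1-1/k$ on which the path sets $v_i^{T,S_0}(A_r)$ are pairwise disjoint. I then apply Proposition~\ref{prop:remove-fixed-points} successively, for $r=1,\dots,M$, to $A_r$. The family $\mathcal C$ at each stage contains the $C_j$ together with all path sets $v_i^{T,S}(A_{r'})$ for all $r'$. Condition~(i) of that proposition, combined with \eqref{eq:path-sets-preserved}, ensures two things: the path sets are unchanged at later stages, so the hypotheses of the proposition persist, and the earlier gains persist. The persistence of earlier gains needs one check: the modification at stage $r$ is supported on $v_{i_0}(F_r)\subseteq v_{i_0}(A_r)$, which is disjoint from the paths of the other $A_{r'}$ if we intersect the path-disjointness across different $r$. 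If it is not, I would instead fix the whole union at once by taking $A=\bigcup A_r$ in a single application. The end result is $S'$ in the neighbourhood with $\mathrm{Fix}(v^{T,S'})$ of measure $<1/k$.

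\emph{Ergodicity.} This part is immediate. Any $\alpha^{T,S}$-invariant set is $T$-invariant, hence null or conull when $T$ is ergodic.

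\emph{The main obstacle.} The hardest part is the initial step of making the path points distinct on most of $X$ without destroying the neighbourhood condition. I would first try induction on the length of $v$. Assume that all proper terminal subwords $u$ already have $\nu(\mathrm{Fix}\,u^{T,S_0})$ small. Distinctness of $v_i(x)$ and $v_j(x)$ is equivalent to $x\notin\mathrm{Fix}(v_i^{-1}v_j)$, and $v_i^{-1}v_j$ is a shorter reduced word. This means density should be proved for all words simultaneously by induction on length, intersecting the finitely many relevant open dense sets $\mathcal U_{u,k'}$ for shorter $u$.
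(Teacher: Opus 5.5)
\textbf{There is a genuine gap in the step where you claim earlier gains persist.}

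Your architecture agrees with the paper's up to the successive surgeries:
\begin{enumerate}
\item openness via lower semicontinuity of $\du(\cdot,\id)$;
\item density by induction on $|v|$;
\item a choice of $S_0\in\cN$ inside open dense sets for shorter words, so that the path points are distinct off a small set;
\item Lemma~\ref{lem:disjointization};
\item applications of Proposition~\ref{prop:remove-fixed-points} to $A_1,\dots,A_M$ that preserve the $C_j$ and the path sets of the blocks not yet treated.
\end{enumerate}

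After stage $r'$ you know that $A_{r'}\cap\mathrm{Fix}(v^{T,S_{r'}})$ is null. This is a pointwise statement. Putting the path sets $v_i^{T,S}(A_{r'})$ into $\mathcal C$ only guarantees, via \eqref{eq:path-sets-preserved}, that these sets and $v^{T,S}(A_{r'})$ are unchanged \emph{as sets}. The later involution $P_r$ permutes points inside $D_r\subseteq v_{i_0}^{T,S}(A_r)$. In general $D_r$ meets path sets of $A_{r'}$ at positions where a letter $b^{\pm1}$ is applied, so points of $A_{r'}$ get rerouted. Since $v^{T,S}(A_{r'})$ may overlap $A_{r'}$, nothing prevents rerouted points from landing on themselves.

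Neither of your remedies repairs this.
\begin{enumerate}
\item Suppose $v_{i_0}(A_r)$ were disjoint from every $v_i(A_{r'})$ with $i\neq i_0$. Together with the disjointness within each block, this would make $v_{i_0}(\bigcup_rA_r)$ and $v_i(\bigcup_rA_r)$ disjoint. That forces $\nu(\bigcup_rA_r)\le 1/2$, which is incompatible with $\nu(\bigcup_rA_r)>1-1/k$.
\item Taking $A=\bigcup_rA_r$ violates the hypothesis of Proposition~\ref{prop:remove-fixed-points}. Pairwise disjointness of $v_0(A),\dots,v_{n-1}(A)$ forces $\nu(A)\le 1/n$.
\end{enumerate}

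\textbf{The missing idea} is to convert each pointwise gain into a set-level property that \eqref{eq:path-sets-preserved} does protect.
\begin{itemize}
\item After obtaining $S_r$, apply Lemma~\ref{lem:disjointization} to $\id$ and $v^{T,S_r}$ on $A_r\setminus\mathrm{Fix}(v^{T,S_r})$. This gives finitely many disjoint $B\subseteq A_r$ of total measure $>\nu(A_r)-\delta$ with $B\cap v^{T,S_r}(B)=\varnothing$.
\item At every later stage, put the path sets $v_i^{T,S}(B)$, $0\le i<n$, of these blocks into $\mathcal C$, together with the $C_j$ and the path sets of the untreated $A_j$.
\item Then \eqref{eq:path-sets-preserved} with $i=n$ keeps the set $v^{T,S}(B)$ unchanged, so $B\cap v^{T,S_M}(B)=\varnothing$ still holds at the end.
\item If $M\delta<\nu(\bigcup_rA_r)-(1-1/k)$, the union of all these $B$ has measure $>1-1/k$ and consists of points moved by $v^{T,S_M}$.
\end{itemize}
This is exactly the role of the families $\cB_r$ in the paper.

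\textbf{A minor slip as well:} $v_i^{-1}v_j$ need not be shorter than $v$; for $v=aba$ one gets $v_1^{-1}v_2=a^{-1}ba$. Use instead $v_j=u_{j,i}v_i$ with $|u_{j,i}|=j-i<n$. Then $\{x\mid v_i(x)=v_j(x)\}=v_i^{-1}(\mathrm{Fix}(u_{j,i}))$, which has the same measure as $\mathrm{Fix}(u_{j,i})$.
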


\begin{proof}
The ergodicity assertion is immediate since every $\alpha^{T,S}$-invariant set is
$T$-invariant.

For a non-trivial reduced word $v\in\F_2$ and $0<\epsilon\leq1$, let
$\cU(v,\epsilon)=\{S\in\Aut(X,\nu)\mid \du(v^{T,S},\id)>1-\epsilon\}$.
By equation~\eqref{eq:du-fix}, $S$ belongs to $\cU(v,\epsilon)$ for every
$\epsilon$ if and only if $\nu(\text{Fix}(v^{T,S}))=0$.

We first prove that the sets $\cU(v,\epsilon)$ are open.
The word map $S\mapsto v^{T,S}$ is  continuous, since
$\Aut(X,\nu)$ is a topological group. Moreover, the map
$R\longmapsto \du(R,\id)$
is lower semicontinuous; equivalently, for every
$c>0$, the set
$\{R\in\Aut(X,\nu)\mid \du(R,\id)>c\}$
is  open; see \cite[Section~1(C), pp.~3--4]{kechris2010global}.
Consequently, $\cU(v,\epsilon)$ is  open.

We now prove by induction on the  length of $v$  that
$\cU(v,\epsilon)$ is dense
for every non-trivial word $v$ and every $0<\epsilon\leq1$.

If $|v|=1$ and $v=a^{\pm1}$, then
$\du(v^{T,S},\id)=1$ for every $S$ because $T$ is aperiodic, by
equation~\eqref{eq:du-fix}; hence $\cU(v,\epsilon)=\Aut(X,\nu)$. If
$v=b^{\pm1}$, density follows from the fact that aperiodic transformations
form a dense $G_\delta$ subset of $\Aut(X,\nu)$
\cite[Proposition~2.3]{kechris2010global}, since
$\du(S^{\pm1},\id)=1$ for every aperiodic $S$.

Now fix $n\geq2$, assume the density assertion for every non-trivial  word
of length less than $n$, and let $v\in \F_2$ have length $n$. Let
$\cN$ be an arbitrary nonempty basic 
neighbourhood, so that 
\begin{equation} \label{def:N}\cN=\bigl\{R\in\Aut(X,\nu)\mid 
   \nu\bigl(R(C_j)\triangle S_*(C_j)\bigr)<\kappa,1\leq j\leq q\bigr\}
\end{equation}
for some $S_*\in \Aut(X, \nu)$, $q \in \N$, Borel sets $C_1, \ldots,C_q$ in $X$ and $\kappa>0$. 
Since such sets form a basis for the weak topology, it
suffices to show that $\cU(v,\epsilon)\cap\cN\neq\varnothing$.
If $v\in\langle a\rangle$,
then $v^{T,S}$ is a nonzero power of the aperiodic transformation $T$ and has
null fixed-point set for every $S$. We may therefore assume that
$b^{\pm1}$ occurs in $v$.

By the induction hypothesis,
\[
 \cD_{<n}
 =\bigcap_{\substack{u\neq e\\|u|<n}}
    \bigcap_{r\geq1}\cU(u,1/r)
\]
is a dense $G_\delta$ set because it is a countable intersection of dense open
sets, and $\Aut(X,\nu)$ is Polish, hence a Baire space. Choose
$S_0\in\cN\cap\cD_{<n}$, which is possible because $\cN$ is nonempty
and open. By equation~\eqref{eq:du-fix}, for every non-trivial word $u$
with $|u|<n$,
\begin{equation}\label{eq:Dn-fix}
 \nu\bigl(\text{Fix}(u^{{T,S_0}})\bigr)=0.
\end{equation}

Write $v=\ell_1\cdots\ell_n$ and for $i=0, \ldots,n$ let $v_i$ be the terminal subwords
of $v$, as introduced before Proposition \ref{prop:remove-fixed-points}. We
claim that
\begin{equation}\label{eq:terminal-points}
 v_0^{T,S_0}(x),v_1^{T,S_0}(x),\ldots,v_{n-1}^{T,S_0}(x)
\end{equation}
are pairwise distinct for almost every $x\in X$. Indeed, if $0\leq i<j<n$, then
$v_j=u_{j,i}v_i$, where
$u_{j,i}=\ell_{n-j+1}\cdots\ell_{n-i}$ is a non-trivial word of
length $j-i<n$. If
$v_j^{T,S_0}(x)=v_i^{T,S_0}(x)$ and
$y=v_i^{T,S_0}(x)$, then $u_{j,i}^{T,S_0}(y)=y$. The fixed-point set of
$u_{j,i}^{T,S_0}$ is null by equation~\eqref{eq:Dn-fix}; since
$v_i^{T,S_0}$ preserves measure, the set of such $x$ is null. There are only
finitely many pairs $(i,j)$, proving the claim.

Let $D_0\subseteq X$ be a conull Borel set on which the points in
equation~\eqref{eq:terminal-points} are pairwise distinct. Apply
Lemma~\ref{lem:disjointization} to
$v_0^{T,S_0},\ldots,v_{n-1}^{T,S_0}$ on $D_0$, with $a=1-\epsilon$. We obtain
pairwise disjoint Borel sets $A_1,\ldots,A_M$ such that
$\nu(A_1\cup\cdots\cup A_M)>1-\epsilon$ and, for each $r=1, \ldots, M$, the sets
$v_0^{T,S_0}(A_r),\ldots,v_{n-1}^{T,S_0}(A_r)$ are pairwise disjoint. Let
$\eta=\nu(A_1\cup\cdots\cup A_M)-(1-\epsilon)>0$ and
$\delta=\frac{\eta}{2M}$.

We shall now show that for each $r=1,\ldots,M$ there exist
$S_r\in \Aut(X,\nu)$ and a finite collection
$\cB_r$ of pairwise disjoint Borel subsets of $A_r$ such that
the following hold:
\begin{enumerate}[label=\textup{(\alph*)}]
\item $S_r\in\cN$;
\item for every $p=1, \ldots, M, p\leq r$ and every $B\in\cB_p$,
      $B\cap v^{T,S_r}(B)=\varnothing$;
\item for every $j=1,\ldots, M$, $j>r$ and $i=0,\ldots,n-1$,
      $v_i^{T,S_r}(A_j)=v_i^{T,S_0}(A_j)$;
\item $\displaystyle\sum_{B\in\cB_p}\nu(B)\geq\nu(A_p)-\delta$
      for every $p=1, \ldots, M, p\leq r$.
\end{enumerate}
We construct these inductively, starting at $r=0$ with the chosen $S_0$ and the empty family $\cB_0$.

Now let $r\in \{1,\ldots,M\}$, and suppose that $S_{r-1}$ and $\cB_1,\ldots,\cB_{r-1}$ have been constructed
so that (a)--(d) hold at stage $r-1$.
From (c) at stage $r-1$, applied to $j=r$, we see that
$v_i^{T,S_{r-1}}(A_r)=v_i^{T,S_0}(A_r)$
for $0\leq i<n$,
so these are the original pairwise disjoint images of $A_r$; in
particular $S_{r-1}$, $A_r$ and $v$ satisfy the hypothesis of
Proposition~\ref{prop:remove-fixed-points}.

The next perturbation must preserve exactly three finite collections of
sets, namely the sets defining $\cN$, the complete $v$-paths of the blocks
already chosen, and the complete $v$-paths of the sets which have not yet
been seen. Accordingly, consider the finite family
\[
\begin{aligned}
\mathcal C_r={}&\{C_1,\ldots,C_q\} \cup\\
&\{v_i^{T,S_{r-1}}(B)\mid p<r,B\in\cB_p,0\leq i<n\} \cup\\
&\{v_i^{T,S_{r-1}}(A_j)\mid j>r,0\leq i<n\}.
\end{aligned}
\]
%This family is finite because $q,M,n$ and every $\cB_p$ are finite.

Apply Proposition~\ref{prop:remove-fixed-points} to $S_{r-1}$, $A_r$, and
$\mathcal C_r$, and call the resulting transformation $S_r$;
thus, $S_r^{\pm1}(C)=S_{r-1}^{\pm1}(C)$ for every $C\in\mathcal C_r$,
and $\nu\bigl(A_r\cap\text{Fix}(v^{T,S_r})\bigr)=0$. 
If $\nu(A_r)\leq\delta$, let $\cB_r=\varnothing$. If
$\nu(A_r)>\delta$, apply Lemma~\ref{lem:disjointization} to
$\id$ and $v^{T,S_r}$ on the Borel set
$A_r\setminus\text{Fix}(v^{T,S_r})$, with $a=\nu(A_r)-\delta$.
Note that the conditions of the lemma are satisfied as
$\nu\bigl(A_r\setminus\text{Fix}(v^{T,S_r})\bigr)=\nu(A_r)>\nu(A_r)-\delta$,
and $\id(x)\neq v^{T,S_r}(x)$ for $x \in A_r\setminus\text{Fix}(v^{T,S_r})$.
We thus obtain a finite
family $\cB_r$ of pairwise disjoint subsets of $A_r$ such that
$\sum_{B\in\cB_r}\nu(B)>\nu(A_r)-\delta$ and
$B\cap v^{T,S_r}(B)=\varnothing$ for every $B\in\cB_r$.

We shall now check that the conditions
(a)--(d) hold at stage $r$.

First check (a). Recall equation~\eqref{def:N}.
%\[\cN=\bigl\{R\in\Aut(X,\nu)\mid 
%   \nu\bigl(R(C_j)\triangle S_*(C_j)\bigr)<\kappa,1\leq j\leq q\bigr\}.
%\]
It is obvious that if $R,R'\in\Aut(X,\nu)$ satisfy
$R'(C_j)=R(C_j)$ for $1\leq j\leq q$, then $R\in\cN$ if and only if
$R'\in\cN$. Now $C_j\in\mathcal C_r$ for $1\leq j\leq q$, by the definition of
$\mathcal C_r$, and $S_r$ was obtained by applying
Proposition~\ref{prop:remove-fixed-points} to $S_{r-1}$ with the family
$\mathcal C_r$. Condition~(i) of that proposition, applied to $C=C_j$,
gives
$ S_r(C_j)=S_{r-1}(C_j)$ for $1\leq j\leq q$.
The induction hypothesis (a) at stage $r-1$ says that $S_{r-1}\in\cN$.
By the previous paragraph, $S_r\in\cN$, which is (a) at stage $r$.

To verify (b), let $p<r$ and $B\in\cB_p$. The sets $v_i^{T,S_{r-1}}(B)$, $0\leq i<n$,
belong to $\mathcal C_r$, so equation~\eqref{eq:path-sets-preserved} gives
$v_i^{T,S_r}(B)=v_i^{T,S_{r-1}}(B)$ for $0\leq i\leq n$; taking $i=n$ yields
$v^{T,S_r}(B)=v^{T,S_{r-1}}(B)$, and (b) at stage $r-1$ gives
$B\cap v^{T,S_r}(B)=\varnothing$. For $p=r$ the assertion holds by the properties of $\cB_r$ stated above.

For (c), let $j>r$. The sets $v_i^{T,S_{r-1}}(A_j)$, $0\leq i<n$, also belong to
$\mathcal C_r$, so equation~\eqref{eq:path-sets-preserved}, together with (c) at
stage $r-1$, gives
$v_i^{T,S_r}(A_j)=v_i^{T,S_{r-1}}(A_j)=v_i^{T,S_0}(A_j)$ for $0\leq i<n$.

Finally, the families $\cB_p$ with $p<r$ are unchanged, so (d) persists for
those $p$; and for $p=r$ this holds by the choice of $\cB_r$.

This completes the inductive construction.

Note now that every point in every $B\in\cB_r$ is moved by
$v^{T,S_M}$, because $B\cap v^{T,S_M}(B)=\varnothing$
by (b) at stage $M$. Since the sets $A_r$ are
pairwise disjoint, all the sets in the family $\cB_1 \cup \cdots \cup \cB_M$ are pairwise
disjoint. Therefore
\begin{align*}
 \nu\bigl(\{x\mid v^{T,S_M}(x)\neq x\}\bigr)
 &\geq\sum_{r=1}^M\sum_{B\in\cB_r}\nu(B)
 \geq\sum_{r=1}^M(\nu(A_r)-\delta)\\&=\nu(A_1\cup\cdots\cup A_M)-M\delta
 =1-\epsilon+\frac{\eta}{2}>1-\epsilon.
\end{align*}
Hence $S_M\in\cN\cap\cU(v,\epsilon)$. Since $\cN$ was arbitrary,
$\cU(v,\epsilon)$ is dense, and the induction on $|v|$ is complete.

Thus, every set $\cU(v,1/r)$, with $v\neq e$ and $r\in \N$, is open and
dense in $\Aut(X, \nu)$. Their intersection is comeager. If $S$ belongs to this intersection,
then $\du(v^{T,S},\id)=1$ for every $v\neq e$. %equivalently
%$\nu(\text{Fix}(v^S))=0$ by equation~\eqref{eq:du-fix}. 
Hence $\alpha^{T,S}$ is essentially
free.
\end{proof}

\subsection{The construction for \texorpdfstring{$\F_2$}{F2}}

Note that the group $\Aut(X,\nu)$ contains elements which are at the same time prime and mixing, which will be used below.
Indeed, one may take a mixing rank-one transformation. Explicit examples of these were constructed in \cite{adams1998smorodinsky},
and mixing rank-one transformations have minimal self-joinings and
hence are prime by \cite{ryzhikov1993mixing}.

\begin{proposition}\label{prop:F2-prime-twm}
Let $T\in\Aut(X,\nu)$ be a mixing and prime transformation. There exists a pointwise weakly mixing p.m.p.\ action
$\rho$ of $\F_2=\langle a,b\rangle$ on $(X,\nu)$
with $\rho(a)=T$.
\end{proposition}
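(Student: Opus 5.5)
The plan is a Baire category argument in the style of Proposition~\ref{prop:generic-second-generator}: I will show that the set of $S\in\Aut(X,\nu)$ for which $v^{T,S}$ is weakly mixing for every $v\neq e$ is comeager. Pointwise weak mixing of $\rho=\alpha^{T,S}$ then follows.

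First I encode weak mixing of a single transformation $R$ as a countable intersection of open conditions. Fix a countable $L^2$-dense set $\{\xi_k\}$ of bounded mean-zero functions. Then $R$ is weakly mixing if and only if, for all $k,l,r$, there exists $m\geq1$ with $|\langle R^m\xi_k,\xi_l\rangle|<1/r$. Indeed, by the Koopman spectral criterion, weak mixing is equivalent to $\langle R^m\xi,\eta\rangle\to0$ along a density-one set. Conversely, if $R$ has an eigenvector $\phi\perp1$ with eigenvalue of modulus one, I approximate $\phi$ by some $\xi_k=\xi_l$; then $|\langle R^m\xi_k,\xi_k\rangle|$ stays near $\|\phi\|^2$ for all $m$. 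So weak mixing is equivalent to the stated condition. For fixed $v,k,l,r$, the set
\[
\mathcal W(v,k,l,r)=\{S\mid \exists m\geq1:\ |\langle (v^{T,S})^m\xi_k,\xi_l\rangle|<1/r\}
\]
is open, because $S\mapsto v^{T,S}$ is weakly continuous and matrix coefficients are continuous in the weak topology. It remains to prove density.

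For density, I would use the conjugation trick. Weakly mixing transformations form a dense $G_\delta$ in $\Aut(X,\nu)$, and so does every conjugacy class of an aperiodic transformation, by Halmos's conjugacy lemma. So it suffices to show that for each fixed $v\neq e$, the set $G_v=\{S\mid v^{T,S}\text{ weakly mixing}\}$ is dense. Write $v$ in reduced form. If $v\in\langle a\rangle$, then $v^{T,S}=T^j$ with $j\neq0$, which is mixing because $T$ is mixing; so $G_v$ is everything. Otherwise I conjugate $v$ within $\F_2$ to a cyclically reduced word. Conjugation preserves weak mixing, so I may assume $v=b^{k_1}a^{j_1}\cdots$ is cyclically reduced. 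The idea is to perturb $S$ only on a small set so that $v^{T,S}$ becomes close to a transformation with a prescribed weakly mixing structure. Concretely, I would use Rokhlin towers for the partial products, in the manner of the proof of Proposition~\ref{prop:generic-second-generator}. Given a basic neighbourhood $\mathcal N$ of $S_0$, where $S_0$ is chosen so that $\alpha^{T,S_0}$ is essentially free, the $v$-paths of points are disjoint up to length $n$. One can then modify $S_0$ on a set of small measure, using Lemma~\ref{lem:local-modification} and preserving the sets defining $\mathcal N$, so that $v^{T,S}$ equals any prescribed aperiodic $R$ up to small uniform distance. Choosing $R$ weakly mixing and shrinking the distance places $v^{T,S}$ in the open set $\mathcal W(v,k,l,r)$ while $S\in\mathcal N$. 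This is enough, since only density of each $\mathcal W(v,k,l,r)$ is needed.

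The main obstacle is this last surgery step: showing that $S\mapsto v^{T,S}$ is, locally, approximately onto in the uniform metric, that is, that one can make $v^{T,S}$ approximate an arbitrary target while keeping $S$ weakly close to $S_0$. I would first try to realise $v^{T,S}$ as the first return along the disjoint paths. I would choose a large finite partition, and redefine $S$ on the image of one $b$-step of the path so that the composite map matches $R$ on the partition atoms. This uses the freedom of Lemma~\ref{lem:local-modification} to permute atoms inside sets preserved by the finite Boolean algebra. Since weak neighbourhoods only constrain finitely many sets, refining the partition below those sets should keep $S\in\mathcal N$. Once density holds, intersecting countably many dense open sets over $v,k,l,r$, together with the comeager set from Proposition~\ref{prop:generic-second-generator}, yields $S$ with $\rho=\alpha^{T,S}$ pointwise weakly mixing and $\rho(a)=T$.
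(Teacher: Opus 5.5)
Your Baire-category framework over $S\in\Aut(X,\nu)$ is legitimate. The matrix-coefficient description of weak mixing is correct and $S\mapsto v^{T,S}$ is continuous, so each $G_v$ is a $G_\delta$. Everything therefore reduces to density of $\mathcal W(v,k,l,r)$ for cyclically reduced $v$ containing $b^{\pm1}$. But that density is the whole content of the proposition, and you do not prove it. Moreover, the mechanism you describe for it fails.

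\emph{Small modifications.} Since $\du$ is bi-invariant and $\du(S^{-1},S_0^{-1})=\du(S,S_0)$, you have $\du(v^{T,S},v^{T,S_0})\le|v|\,\du(S,S_0)$. So changing $S_0$ on a set of small measure keeps $v^{T,S}$ uniformly close to $v^{T,S_0}$, not to an arbitrary $R$.

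\emph{Large modifications fixing the $C_j$.} This is what Lemma~\ref{lem:local-modification} provides, and it does not help either. Already for $v=b$, the condition $S\in\mathcal N$ forces $S(C_j)\approx S_0(C_j)$, so any $R$ with $R(C_1)$ far from $S_0(C_1)$ is unreachable.

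\emph{What the surgery actually gives.} On a block with disjoint $v$-path, the surgery of Proposition~\ref{prop:remove-fixed-points} yields $v^{T,S}$ precomposed with a transformation preserving each atom of a finite partition. That partition is dictated by $\mathcal N$ and by the path sets of the other blocks. The images of those atoms under $v^{T,S}$ are frozen, and frozen images may be incompatible with small correlations, for instance if they force $v^{T,S}$ to preserve a nontrivial set.

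Removing fixed points needed control of a single application of $v$. Producing some $m$ with $|\langle (v^{T,S})^m\xi_k,\xi_l\rangle|<1/r$ is a statement about long-time behaviour, and your sketch gives no argument for it. Note also that your sketch uses the mixing of $T$ only for $v\in\langle a\rangle$. Separately, conjugacy classes in $\Aut(X,\nu)$ are dense by Halmos's lemma but meager, not dense $G_\delta$; you do not actually need that claim.

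The paper avoids this surgery altogether.
\begin{itemize}
\item It takes $S$ from Proposition~\ref{prop:generic-second-generator}, so that $\alpha^{T,S}$ is essentially free.
\item It then perturbs only inside the full group, setting $\beta_\phi=\alpha^{T,S\phi}$ with $\phi\in[T]$, which is Polish in $\du$. By Wr\'obel's Lemma~2.2, these actions stay essentially free and generate the same orbit equivalence relation.
\item For each cyclically reduced $v$ containing $b^{\pm1}$, it imports from Wr\'obel (Lemmas~3.1, 3.2 and Proposition~2.1, plus Baire) that $\beta_\phi(v)$ is weakly mixing for a comeager set of $\phi$.
\end{itemize}
The one step of Wr\'obel's proof that would alter $T$ is passing to an orbit equivalent action with mixing first generator. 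Here that step is replaced by the hypothesis that $T$ is mixing. This imported density statement is exactly what your proposal is missing.
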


\begin{proof}
%Write $\F_2=\langle a,b\rangle$. 
Since $T$ is mixing, it is
aperiodic, so
Proposition~\ref{prop:generic-second-generator} gives
$S\in\Aut(X,\nu)$ such that $\alpha := \alpha^{T,S}$ is an essentially free p.m.p.\ action of $\F_2$ on $(X,\nu)$. This
action is ergodic because its restriction to $\langle a\rangle$ is mixing.

%Thus $\alpha$ is a free ergodic action whose first generator acts by the
%prescribed transformation $T$. 
%This is precisely the situation in which the
%genericity argument of \cite{wrobel2026orbit} operates. 
Note that the proof of
\cite[Theorem~1.1]{wrobel2026orbit} begins by replacing the given essentially free
ergodic action by an orbit equivalent one whose first generator is mixing;
it is only this replacement that we must avoid, since we insist on the
prescribed $T$, and everything after it applies verbatim to $\alpha$. This
is the sole reason why
Proposition~\ref{prop:generic-second-generator} was needed.

Let $[T]$ be the full group of the orbit equivalence relation generated by
$T$, a Polish group in the uniform metric
\cite[Proposition~3.2]{kechris2010global} and hence a Baire space. For
$\phi\in[T]$, set
$ \beta_\phi = \alpha^{T, S \phi}$.
%where composition is read from right to left. 
By \cite[Lemma~2.2]{wrobel2026orbit}, every $\beta_\phi$ is essentially free
and generates the same orbit equivalence relation as $\alpha$.

For $w\in\F_2\setminus\{e\}$, let $Z_w$ be the set of
$\phi\in[T]$ for which $\beta_\phi(w)$ is weakly mixing.
We claim that $Z_w$ is comeager in $[T]$. Write $w=uvu^{-1}$ with $v$ cyclically reduced. %The
%transformations $\beta_\phi(w)$ and $\beta_\phi(v)$ are conjugate in
%$\Aut(X,\nu)$, so one is weakly mixing if and only if the other is, and 
By passing to a conjugate automorphism we may assume that $w=v$. If $v=a^{k}$ with $k\neq0$, then
$\beta_\phi(v)=T^{k}$ is mixing for every $\phi$. Otherwise $b^{\pm1}$
occurs in $v$, and \cite[Lemmas~3.1 and~3.2]{wrobel2026orbit} together with
the Baire category theorem show that the set $\cE_v^{C_v}\subseteq[T]$ of
\cite[Section~3]{wrobel2026orbit} is comeager; by
\cite[Proposition~2.1]{wrobel2026orbit}, $\beta_\phi(v)$ is weakly mixing
for every $\phi\in\cE_v^{C_v}$. This proves the claim.

Intersecting comeager sets $Z_w$ over
the countable set $\F_2\setminus\{e\}$, we obtain $\phi$ for which the action $\rho=\beta_\phi$ is pointwise weakly mixing.

Finally, every $\rho$-invariant factor is invariant under
$\rho(a)=T$. Since $T$ is prime, such a factor is trivial or the whole
system. Hence $\rho$ is prime. This finishes the proof.
\end{proof}

Similar actions of all nonabelian countable free groups can be obtained by restriction. To that end it suffices to note that for every $n \in \N\cup \{\infty\}$ the group $\F_2=\langle a,b\rangle$ admits free subgroups $H_n \cong\F_n$ such that $a \in H_n$, and $a$ can be viewed as one of the free generators. 

 For $k\in\Z$, let
$ a_k=b^k a b^{-k}$,
and let $\pi$ be the homomorphism from $\F_2$ to $\Z$ given by setting $\pi(a)=0$ and
$\pi(b)=1$. Then $\ker\pi$ is a free group with free basis
$\{a_k\mid k\in\Z\}$. This can be shown using the standard Nielsen--Schreier techniques (see for example \cite[Section~1.3]{Johnsonbook}). Indeed, $\{b^k\mid k\in\Z\}$ is a \emph{Schreier transversal} for
$\ker\pi$ with respect to the generating set $\{a,b\}$, and the associated
Schreier generators are $b^kab^{-k}=a_k$ for the letter $a$ and the
identity for the letter $b$; by the Nielsen--Schreier theorem in its
Reidemeister--Schreier form, $\ker\pi$ is free on the elements $a_k$,
$k\in\Z$. Consequently, every subset of $\{a_k\mid k\in\Z\}$ generates a
subgroup which is free on that subset. Thus we can choose
$H_n=\langle a_0,a_1,\ldots,a_{n-1}\rangle$ for $2\leq n<\infty$ and
$H_\infty=\langle a_k\mid k\geq0\rangle$. 
%Then $H_n\cong\F_n$ for $2\leq n<\infty$ and $H_\infty\cong\F_\infty$.
%Moreover $H_2\leq H_3\leq\cdots\leq H_\infty$, and every $H_n$ contains
%$a_0=a$.

\begin{thm}\label{thm:all-free-ranks}
For every $2\leq n\leq\infty$, there exists an essentially free, prime, pointwise weakly
mixing p.m.p.\ action of $\F_n$. These actions may be chosen as restrictions
of one action of $\F_2$ to the nested free subgroups
$H_2\leq H_3\leq\cdots\leq H_\infty$ defined above.
\end{thm}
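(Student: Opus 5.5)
Take $T$ to be a mixing rank-one transformation, which is mixing and prime by the remarks before Proposition~\ref{prop:F2-prime-twm}. Let $\rho$ be the action of $\F_2$ produced by that proposition, so $\rho(a)=T$. The plan is to show that for every $2\leq n\leq\infty$ the restriction $\rho|_{H_n}$ is essentially free, prime and pointwise weakly mixing. Composing with the isomorphism $\F_n\cong H_n$ then gives the required action of $\F_n$, and nestedness $H_2\leq H_3\leq\cdots\leq H_\infty$ is built into the definition of the $H_n$.

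Essential freeness and pointwise weak mixing pass to any subgroup with no extra work.

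\begin{itemize}
\item For essential freeness, note that $\rho=\beta_\phi=\alpha^{T,S\phi}$ is essentially free by \cite[Lemma~2.2]{wrobel2026orbit}. If $h\in H_n\setminus\{e\}$, then $h$ is a nontrivial element of $\F_2$, so $\nu(\text{Fix}(\rho(h)))=0$. Thus $\rho|_{H_n}$ is essentially free.
\item For pointwise weak mixing, if $h\in H_n\setminus\{e\}$ then $\rho(h)$ is weakly mixing, because $\rho$ is pointwise weakly mixing on all of $\F_2$. This is a property of each single transformation, so it holds for every nontrivial element of the subgroup.
\end{itemize}

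Primeness is the only point that needs an argument, because primeness does not pass to subgroups in general. The key fact is that $a=a_0\in H_n$ for every $n$. Let $\mathcal F\subseteq\MALG_\nu$ be a $\rho|_{H_n}$-invariant $\sigma$-subalgebra. Then $\mathcal F$ is invariant under $\rho(a_0)=T$. Since $T$ is prime, $\mathcal F$ is trivial or all of $\MALG_\nu$. Hence $\rho|_{H_n}$ is prime. This repeats the last step of the proof of Proposition~\ref{prop:F2-prime-twm}, with the full group replaced by $H_n$.

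What remains is bookkeeping. We need $H_n\cong\F_n$ with $a$ as a free generator, which was established above via the Schreier transversal $\{b^k\}$: the subgroup $\ker\pi$ is free on $\{a_k\}_{k\in\Z}$, and subsets of a free basis generate free factors. We also need the notions involved to be invariant under a group isomorphism, which they are. I expect no real obstacle. The one point to be careful about is that primeness is inherited only because the single element $a$ already acts primely, and this is why $T$ was prescribed in Proposition~\ref{prop:F2-prime-twm} instead of using the generic construction of \cite{wrobel2026orbit} directly.
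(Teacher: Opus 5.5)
Your proposal is correct and follows essentially the same route as the paper: restrict the action $\rho$ of Proposition~\ref{prop:F2-prime-twm} to $H_n$, note that essential freeness and pointwise weak mixing pass to subgroups, and obtain primeness because $a=a_0\in H_n$ acts by the prime transformation $T$. You take essential freeness from the proof of that proposition ($\rho=\beta_\phi$), which is fine; it also follows directly from pointwise weak mixing, since on a nonatomic space a weakly mixing transformation cannot fix pointwise a set of positive measure.
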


\begin{proof}
Identify $\F_n$ with $H_n$ and restrict the action $\rho$ from
Proposition~\ref{prop:F2-prime-twm} to $H_n$. %The restricted action is free
%because $\rho$ is free. It is totally weakly mixing because every infinite
%subgroup of $H_n$ is also an infinite subgroup of $\F_2$. 
If
$\mathcal A$ is an $H_n$-invariant factor, then it is invariant under
$a\in H_n$, hence under the prime transformation $\rho(a)=T$. Therefore
$\mathcal A$ is trivial or the whole system and  the restricted action is prime.
The rest is an easy check.
%It is in particular ergodic, since $\rho(a)=T$ is mixing.
%The proof is complete.
\end{proof}
We next construct non-prime examples for which all factors are coordinate
factors, using the results of \cite{Rudolph_MSJ}.
We shall use the mixing transformation $T$ on a standard nonatomic probability
space $(X_0,\nu_0)$ constructed in
\cite[Theorems~2.7 and~2.8]{Rudolph_MSJ}. We first recall the precise joining
property of this transformation. Let $j \in \N$. An \emph{off-diagonal measure} on $X_0^j$
is the image of $\nu_0$ under a map
$x\mapsto(T^{m_1}x,\ldots,T^{m_j}x)$, where $m_1,\ldots,m_j\in\Z$.
A product of off-diagonal measures is obtained by partitioning the
coordinates into blocks, assigning an off-diagonal measure to each block,
and taking the product over the blocks. A singleton block carries the measure
$\nu_0$.

When $T$ is the automorphism constructed in \cite{Rudolph_MSJ}, for every $n\geq1$ and every choice of nonzero integers $l_1,\ldots,l_n$,
each ergodic $T^{l_1}\times\cdots\times T^{l_n}$-invariant measure with all
one-dimensional marginals equal to $\nu_0$ is such a product. Moreover,
coordinates in the same block have the same exponent $l_i$. This is the
property now called \emph{minimal power joinings}, following
\cite[pp.~533 and~554]{delJuncoRudolph1987graphs}. Rudolph calls it having \emph{minimal
self-joinings}. His definition in \cite[p.~98]{Rudolph_MSJ} allows even
countably many coordinates, so it applies to every finite $n$ used here.
The restriction to equal exponents is also stated at the end of the proof
of \cite[Theorem~2.7]{Rudolph_MSJ}.

\begin{samepage}
\begin{lem}\label{lem: non-prime Z-actions with all factor maps controlled}
For every integer $k\geq1$, there are mixing prime transformations
$T_i\in\Aut(X_i,\nu_i)$ on standard nonatomic probability spaces,
$1\leq i\leq k$, such that every factor algebra of
$T_1\times\cdots\times T_k$ is a coordinate algebra
$\mathcal A_I=\pi_I^{-1}\bigl(\bigotimes_{i\in I}\mathcal B_i\bigr)$
for some $I\subseteq\{1,\ldots,k\}$, where $\mathcal B_i$ is the measure
algebra of $(X_i,\nu_i)$ and $\pi_I$ denotes the corresponding coordinate
projection. Here $\mathcal A_\varnothing$ is the trivial algebra, and all
equalities of factor algebras are understood modulo null sets.
\end{lem}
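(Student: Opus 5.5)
The plan is to take all $T_i$ to be copies of Rudolph's transformation, but with distinct powers: set $(X_i,\nu_i)=(X_0,\nu_0)$ and $T_i=T^{i}$ for $1\leq i\leq k$, where $T$ is the mixing transformation of \cite[Theorems~2.7 and~2.8]{Rudolph_MSJ} with minimal power joinings. Each $T^{i}$ is mixing. Each $T^i$ is also prime: a factor of $T^i$ gives a self-joining of $T^i$ (the relatively independent joining over the factor), and this joining is $T^i\times T^i$-invariant. Its ergodic components are products of off-diagonal measures, that is, either $\nu_0\times\nu_0$ or graphs of powers of $T$. From this the factor is either trivial or everything, since a graph-component $T^m$ must commute with $T^i$ and preserve the factor fibres, and the usual argument for minimal self-joinings applies.

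For the product, let $\mathcal A$ be a factor algebra of $T_1\times\cdots\times T_k$. Form the relatively independent self-joining $\lambda$ of $\nu_0^{\otimes k}$ over $\mathcal A$. This is a measure on $X_0^{2k}$, invariant under $T^{1}\times\cdots\times T^{k}\times T^{1}\times\cdots\times T^{k}$, with all one-dimensional marginals equal to $\nu_0$. Decompose $\lambda$ into ergodic components. By minimal power joinings, each component is a product of off-diagonal measures whose blocks join only coordinates with equal exponents. Since the exponents $1,\dots,k$ are distinct within each copy, a block can contain at most coordinate $i$ of the first copy and coordinate $i$ of the second copy. The two marginals on each copy are $\nu_0^{\otimes k}$, so each component therefore has the form $\bigotimes_{i\in J}\Delta_{m_i}\otimes\bigotimes_{i\notin J}(\nu_0\times\nu_0)$, where $\Delta_m$ is the graph of $T^m$ in coordinates $(i,i')$.

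The step I expect to be hardest is to conclude from this that $\mathcal A=\mathcal A_I$. I would first show that the powers $m_i$ must vanish. The diagonal measure in the fibred product $\lambda$ is supported on pairs with equal $\mathcal A$-image. Hence the conditional expectation satisfies $\int f\otimes\bar f\,d\lambda=\|\mathbb E_{\mathcal A}f\|_2^2$, which is at most $\|f\|_2^2$. Testing this against $f=f_1\otimes\cdots\otimes f_k$, together with mixing (through the functions $m\mapsto\langle T^m f_i,f_i\rangle$), should force the weight on components with $m_i\neq0$ to be approximable away, leaving only $m_i=0$. I would treat this as an ergodic decomposition argument and use that $T^{m}$ for $m\neq0$ is not in the centralizer compatible with $\mathcal A$ unless it preserves $\mathcal A$.

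Alternatively, and I expect more cleanly, I would use the fact that $\lambda$ is an invariant measure decomposed over the finitely many types $J\subseteq\{1,\dots,k\}$. The component types give
\[
\|\mathbb E_{\mathcal A}(f_1\otimes\cdots\otimes f_k)\|_2^2=\sum_{J}c_J\prod_{i\in J}\|f_i\|_2^2\prod_{i\notin J}|\nu_0(f_i)|^2,
\]
once the components with $m_i\neq0$ have been absorbed, with weights $c_J\geq0$ summing to $1$. From this I would show that $\mathbb E_{\mathcal A}$ is a convex combination of the projections $\mathbb E_{\mathcal A_J}$. Since $\mathbb E_{\mathcal A}$ is itself a projection, it must equal a single $\mathbb E_{\mathcal A_I}$, and hence $\mathcal A=\mathcal A_I$.
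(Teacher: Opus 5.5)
\textbf{The gap.} Your setup matches the paper's: distinct powers of Rudolph's transformation, the relatively independent self-joining $\lambda$ over $\mathcal A$, its ergodic decomposition, and the block description of the components via minimal power joinings. The problem is the step you flag as hardest. Both of your routes first require discarding the ergodic components with some $m_i\neq0$, and the mechanism you offer does not do this:
\begin{enumerate}
\item The bound $\int f\otimes\bar f\,d\lambda=\|\mathbb{E}_{\mathcal A}f\|_2^2\leq\|f\|_2^2$ holds for every joining with marginals $\nu$, by Cauchy--Schwarz, so it carries no information about $\lambda$.
\item Mixing controls $\langle U_T^mf_i,f_i\rangle$ only as $|m|\to\infty$, not at the fixed exponents $m_i$ occurring in a given component.
\item The remark about $T^m$ and ``the centralizer compatible with $\mathcal A$'' is not an argument.
\end{enumerate}
Consequently your formula for $\|\mathbb{E}_{\mathcal A}(f_1\otimes\cdots\otimes f_k)\|_2^2$ as $\sum_Jc_J\prod_{i\in J}\|f_i\|_2^2\prod_{i\notin J}|\nu_0(f_i)|^2$ is unproved. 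Your final step is correct: a convex combination of the commuting coordinate projections $\mathbb{E}_{\mathcal A_J}$ that is itself a projection must be a single $\mathbb{E}_{\mathcal A_I}$, since every summand $\bigotimes_{i\in S}L^2_0\otimes\bigotimes_{i\notin S}\C1$ is nonzero. But that step is never reached.

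\textbf{How the paper avoids it.} You do not need to eliminate the components with $m_i\neq0$ at all. Write $\mathbb{E}_{\mathcal A}=\sum_Kp_KK$ with $K=K_1\otimes\cdots\otimes K_k$ and $K_i\in\{\mathbb{E}_{\C1}\}\cup\{U_T^m\mid m\in\Z\}$. Contractivity gives
$\sum_Kp_K\|Kf-f\|_2^2\leq2\|f\|_2^2-2\operatorname{Re}\langle\mathbb{E}_{\mathcal A}f,f\rangle$,
so $L^2(\mathcal A)=\bigcap_{p_K>0}\operatorname{Fix}(K)$. Each fixed space can then be computed directly. Since $K=V_K\mathbb{E}_{\mathcal A_{J_K}}$ with $V_K$ unitary, a fixed vector lies in $L^2(\mathcal A_{J_K})$ and is $V_K$-invariant. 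Nonzero powers of $T$ are mixing, hence ergodic, so Fubini gives $\operatorname{Fix}(K)=L^2(\mathcal A_{I_K})$ with $I_K=\{i\mid K_i=\id\}$. Therefore $\mathcal A=\mathcal A_I$ with $I=\bigcap_{p_K>0}I_K$. That only $m_i=0$ survives is then a consequence, not an input. This is the paper's argument, and its $k=1$ case is exactly what your ``usual argument'' for primeness of $T^{i}$ has to be.

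\textbf{A smaller omission.} Before invoking minimal power joinings, you must check that almost every ergodic component, not just $\lambda$, has all one-dimensional marginals equal to $\nu_0$. This holds because $\nu_0$ is extreme among $T^{r_i}$-invariant probability measures, $T^{r_i}$ being ergodic.
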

\end{samepage}

\begin{proof}
Choose distinct positive integers $r_1<\cdots<r_k$, let
$(X_i,\nu_i)=(X_0,\nu_0)$ and $T_i=T^{r_i}$, and write
$R=T_1\times\cdots\times T_k$ and
$(X,\nu)=(X_0^k,\nu_0^{\otimes k})$. Each $T_i$ is mixing, and so is $R$.
Indeed, mixing of a finite product follows first for measurable rectangles
and then for arbitrary measurable sets by approximation.

Let $\mathcal A$ be an $R$-invariant factor algebra and let
$\mathbb{E}_{\mathcal A}$ be the corresponding conditional expectation, acting on $L^2(X,\nu)$.
We consider the \emph{relatively independent self-joining} $\lambda$ over
$\mathcal A$, which is a measure on $X^2=X_0^{2k}$. Both marginals of $\lambda$ are $\nu$, and for
$f,h\in L^2(X,\nu)$ we have
\[
 \int_{X\times X}f(x)\overline{h(y)}d\lambda(x,y)
 =\langle\mathbb{E}_{\mathcal A}f,\mathbb{E}_{\mathcal A}h\rangle
 =\langle\mathbb{E}_{\mathcal A}f,h\rangle.
\]
Since $\mathcal A$ is $R$-invariant, we have
$\mathbb{E}_{\mathcal A}(f\circ R)=(\mathbb{E}_{\mathcal A}f)\circ R$
for every $f\in L^2(X,\nu)$. It follows that
\[
 \begin{aligned}
 \int_{X\times X}f(Rx)\overline{h(Ry)}d\lambda(x,y)
 &=\int_X\mathbb{E}_{\mathcal A}(f\circ R)(x)
       \overline{\mathbb{E}_{\mathcal A}(h\circ R)(x)}d\nu(x)\\
 &=\int_X(\mathbb{E}_{\mathcal A}f)(Rx)
       \overline{(\mathbb{E}_{\mathcal A}h)(Rx)}d\nu(x)\\
 &=\int_X(\mathbb{E}_{\mathcal A}f)(x)
       \overline{(\mathbb{E}_{\mathcal A}h)(x)}d\nu(x)\\
 &=\int_{X\times X}f(x)\overline{h(y)}d\lambda(x,y).
 \end{aligned}
\]
Here the third equality uses that $R$ preserves $\nu$.
Thus, $\lambda$ is $R\times R$-invariant. Let
$\lambda=\int\lambda_\alpha d\eta(\alpha)$ be its ergodic decomposition.

Almost every component $\lambda_\alpha$ has marginal $\nu_0$ on each
of the $2k$ coordinates. Indeed, let $\pi_i$ be the projection onto the
$i$-th coordinate of either copy of $X$. Since
$\pi_i\circ(R\times R)=T^{r_i}\circ\pi_i$, we have
\[
 (T^{r_i})_*(\pi_i)_*\lambda_\alpha
 =(\pi_i)_*(R\times R)_*\lambda_\alpha
 =(\pi_i)_*\lambda_\alpha.
\]
Thus, $(\pi_i)_*\lambda_\alpha$ is $T^{r_i}$-invariant, and
$\nu_0=\int(\pi_i)_*\lambda_\alpha d\eta(\alpha)$.
Since $T^{r_i}$ is ergodic, $\nu_0$ is an extreme point of the convex
set of $T^{r_i}$-invariant probability measures. Hence
$(\pi_i)_*\lambda_\alpha=\nu_0$ for almost every $\alpha$.

We now apply the minimal power joining property to the $2k$ coordinates
of $\lambda_\alpha$ for almost every $\alpha$. The exponents are $r_1,\ldots,r_k,r_1,\ldots,r_k$, and
each one-dimensional marginal is $\nu_0$. Since the $r_i$ are distinct,
every block is either a singleton or the pair $\{i,k+i\}$ for some $i$.
Identifying $X\times X$ with $\prod_{i=1}^k(X_i\times X_i)$ by pairing
the corresponding coordinates, we obtain
$\lambda_\alpha=\bigotimes_{i=1}^k\theta_{i,\alpha}$, where
$\theta_{i,\alpha}$ is a measure on $X_i\times X_i$.
If the two coordinates form singleton blocks, then
$\theta_{i,\alpha}=\nu_0\otimes\nu_0$. Otherwise $\theta_{i,\alpha} =
(T^a,T^b)_*\nu_0=(\id,T^{b-a})_*\nu_0$ for some $a,b\in\Z$.
Thus, each $\theta_{i,\alpha}$ is either product measure or a graph joining
$(\id,T^m)_*\nu_0$ with $m\in\Z$. In particular, both marginals of
$\lambda_\alpha$ are $\nu$.

To use this description, we associate an operator with each joining.
If $\theta$ is a probability measure on $X\times X$ with both marginals
$\nu$, then the Cauchy--Schwarz inequality gives
$\left|\int_{X\times X}f(x)\overline{h(y)}d\theta(x,y)\right|
\leq\|f\|_2\|h\|_2$ for $f,h\in L^2(X,\nu)$. Hence there is a unique contraction $K_\theta$
on $L^2(X,\nu)$ satisfying
$\langle K_\theta f,h\rangle=\int f(x)\overline{h(y)}d\theta(x,y)$.
In particular, $K_\lambda=\mathbb{E}_{\mathcal A}$.

We can also `localise' this construction to measures on $X_0 \times X_0$ with both marginals equal to $\nu_0$.
Let $U_Tf=f\circ T^{-1}$ and
$\mathbb{E}_{\C1}f=(\int f d\nu_0)1$ on $L^2(X_0,\nu_0)$.
The product measure gives the operator $\mathbb{E}_{\C1}$, since its matrix
coefficient is $(\int f d\nu_0)\overline{(\int h d\nu_0)}$.
The graph joining $(\id,T^m)_*\nu_0$ gives $U_T^m$, since
\[
 \int f(z)\overline{h(T^mz)}d\nu_0(z)
 =\langle f,h\circ T^m\rangle
 =\langle U_T^mf,h\rangle.
\]
For elementary tensors $f=f_1\otimes\cdots\otimes f_k$ and
$h=h_1\otimes\cdots\otimes h_k$, the product description of
$\lambda_\alpha$ therefore gives
\[
 \begin{aligned}
 \langle K_{\lambda_\alpha}f,h\rangle
 &=\prod_{i=1}^k\int f_i(x_i)\overline{h_i(y_i)}d\theta_{i,\alpha}(x_i,y_i)\\
 &=\prod_{i=1}^k\langle K_{i,\alpha}f_i,h_i\rangle,
 \end{aligned}
\]
where $K_{i,\alpha}\in\{\mathbb{E}_{\C1}\}\cup\{U_T^m\mid m\in\Z\}$.
Since the linear span of elementary tensors is dense, this proves
$K_{\lambda_\alpha}=K_{1,\alpha}\otimes\cdots\otimes K_{k,\alpha}$.

The ergodic decomposition gives
$\langle\mathbb{E}_{\mathcal A}f,h\rangle
=\int\langle K_{\lambda_\alpha}f,h\rangle d\eta(\alpha)$
for every $f,h\in L^2(X,\nu)$. There are only countably many possible
operators $K_{\lambda_\alpha}$. For each such operator $K$, the set
$\{\alpha\mid K_{\lambda_\alpha}=K\}$ is measurable. Indeed, the matrix
coefficients are measurable, and equality of bounded operators on the
separable space $L^2(X,\nu)$ can be tested on a countable dense subset.
Let $p_K$ be the $\eta$-measure of this set. Then $p_K\geq0$,
$\sum_Kp_K=1$ and $\mathbb{E}_{\mathcal A}=\sum_Kp_KK$.
The series converges in operator norm because
$\sum_Kp_K\|K\|\leq1$.

If $\mathbb{E}_{\mathcal A}f=f$, contractivity of the operators $K$ gives
\[
 \begin{aligned}
 \sum_Kp_K\|Kf-f\|_2^2
 &\leq\sum_Kp_K\bigl(2\|f\|_2^2-2\operatorname{Re}\langle Kf,f\rangle\bigr)\\
 &=2\|f\|_2^2-2\operatorname{Re}\langle\mathbb{E}_{\mathcal A}f,f\rangle
 =0.
 \end{aligned}
\]
Thus, $Kf=f$ whenever $p_K>0$. Conversely, a vector fixed by every such
$K$ is fixed by their convex combination. Consequently,
$L^2(\mathcal A)=\bigcap_{p_K>0}\operatorname{Fix}(K)$.

It remains to identify the fixed space of each $K$. Write
$K=K_1\otimes\cdots\otimes K_k$, and let
$J_K=\{i\mid K_i\neq\mathbb{E}_{\C1}\}$ and
$I_K=\{i\mid K_i=\id\}$. For $i\in J_K$, write $K_i=U_T^{m_i}$.
Here $m_i=0$ exactly when $i\in I_K$, since $T$ is mixing on a nonatomic
space. Let $V_K$ be the Koopman operator of the transformation which
applies $T^{m_i}$ in coordinate $i\in J_K$ and fixes the other coordinates.
Then $K=V_K\mathbb{E}_{\mathcal A_{J_K}}$. Indeed, with the coordinates in
their original order,
\[
 \mathbb{E}_{\mathcal A_{J_K}}(f_1\otimes\cdots\otimes f_k)(x)
 =\left(\prod_{i\notin J_K}\int f_i d\nu_0\right)
   \prod_{i\in J_K}f_i(x_i).
\]
Applying $V_K$ replaces $f_i(x_i)$ by $f_i(T^{-m_i}x_i)$ for $i\in J_K$,
which gives exactly $K(f_1\otimes\cdots\otimes f_k)$. The identity follows
by density.

Suppose $Kf=f$. Since $V_K$ is unitary,
$\|\mathbb{E}_{\mathcal A_{J_K}}f\|_2=\|Kf\|_2=\|f\|_2$.
The operator $\mathbb{E}_{\mathcal A_{J_K}}$ is an orthogonal projection,
so $\mathbb{E}_{\mathcal A_{J_K}}f=f$. Thus, $f$ depends only on the
coordinates in $J_K$ and is fixed by $V_K$.
By Fubini, for almost every value of the coordinates in $I_K$, the
corresponding section of $f$ is invariant under
$\prod_{i\in J_K\setminus I_K}T^{m_i}$.
Each exponent in this product is nonzero, so each transformation is
mixing. Their finite product is therefore mixing and hence ergodic.
Thus, each such section is constant almost everywhere, which shows that
$f\in L^2(\mathcal A_{I_K})$.
If $J_K\setminus I_K$ is empty, this conclusion already follows from
$f\in L^2(\mathcal A_{J_K})$.
Conversely, $K$ fixes every function of the coordinates in $I_K$, since
each of its other tensor factors fixes the constant function $1$.
Therefore $\operatorname{Fix}(K)=L^2(\mathcal A_{I_K})$.

Let $I=\bigcap_{p_K>0}I_K$. For coordinate conditional expectations we
have $\mathbb{E}_{\mathcal A_J}\mathbb{E}_{\mathcal A_L}
=\mathbb{E}_{\mathcal A_{J\cap L}}$.
This follows by testing on elementary tensors, since both sides retain
precisely the coordinates in $J\cap L$ and integrate the others.
%These projections commute, and the range of a product of commuting
%orthogonal projections is the intersection of their ranges.
Only finitely many different subsets $I_K$ can occur, so
$L^2(\mathcal A)=\bigcap_{p_K>0}L^2(\mathcal A_{I_K})=L^2(\mathcal A_I)$.
Equality of these spaces implies equality of the factor algebras modulo
null sets. Indeed, the indicator of a set in either algebra is measurable
with respect to the other algebra up to a null set.
Thus, $\mathcal A=\mathcal A_I$. Conversely, every coordinate algebra is
$R$-invariant, so these are exactly its factor algebras.

Finally, applying the same argument with $k=1$ and $r_1=r$ shows that,
for every $r\geq1$, the only factors of $T^r$ are the trivial factor and
the whole space. Hence every $T^r$ is prime. Each $T_i$ is also mixing,
which proves the lemma.
\end{proof}

\begin{thm}\label{thm:all-free-ranks-nonprime-version}
Let $(X,\nu)$ be a standard nonatomic probability space. For every $2\leq n\leq\infty$, there exists an essentially free, non-prime, pointwise
weakly mixing p.m.p.\ action
$\beta$ of $\F_n$ on $(X,\nu)$ which is also pointwise weakly mixing
relative to every proper $\F_n$-factor of $(X,\nu)$. Moreover, $\beta$
is essentially free on every non-trivial factor.
\end{thm}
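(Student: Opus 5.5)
Take $k=2$ in Lemma~\ref{lem: non-prime Z-actions with all factor maps controlled}: mixing prime $T_1,T_2$ on $(X_1,\nu_1),(X_2,\nu_2)$ such that the only factors of $R=T_1\times T_2$ are $\mathcal A_\varnothing$, $\mathcal A_{\{1\}}$, $\mathcal A_{\{2\}}$ and everything. Since $R$ is mixing and aperiodic, and $(X_1\times X_2,\nu_1\otimes\nu_2)$ is standard nonatomic, we may identify it with $(X,\nu)$.

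Next I would rerun the construction of Proposition~\ref{prop:F2-prime-twm}, but choosing the second generator so that it preserves the product structure. Concretely, set $\rho(a)=R$ and $\rho(b)=S_1\times S_2$, where for each $i$ the pair $(T_i,S_i)$ is obtained by applying Proposition~\ref{prop:F2-prime-twm} on $X_i$. This gives essentially free, pointwise weakly mixing actions $\rho_i$ of $\F_2$ with $\rho_i(a)=T_i$, and $\rho=\rho_1\times\rho_2$ is the diagonal product action. Then the following hold.
\begin{itemize}
\item $\rho$ is essentially free, since already $\rho_1$ is.
\item $\rho$ is pointwise weakly mixing, since a product of two weakly mixing transformations is weakly mixing.
\item Every $\rho$-factor is $R$-invariant, hence equal to one of the four coordinate algebras. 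All four are $\rho$-invariant, so the proper factors are exactly $\mathcal A_\varnothing$, $\mathcal A_{\{1\}}$ and $\mathcal A_{\{2\}}$. In particular $\rho$ is non-prime.
\item Relative weak mixing with respect to $\mathcal A_{\{1\}}$ follows from Remark~\ref{remark: relative weak mixing for diagonal action wrt a coordinate factor}. For $g\ne e$, $\rho(g)=\rho_1(g)\times\rho_2(g)$ is a product of two weakly mixing transformations, so it is weakly mixing relative to $L^\infty(X_1)\bar\otimes\C1$. The same argument handles $\mathcal A_{\{2\}}$, and the trivial factor is covered by pointwise weak mixing.
\item On a non-trivial factor, say $\mathcal A_{\{1\}}$, the factor action is $\rho_1$, which is essentially free. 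The whole space is covered by freeness of $\rho$.
\end{itemize}

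For general $n$, restrict to $H_n\le\F_2$ exactly as in Theorem~\ref{thm:all-free-ranks}. Since $a\in H_n$, every $H_n$-factor is $R$-invariant, so the factor classification persists. Pointwise (relative) weak mixing and freeness of the factor actions pass to subgroups elementwise.

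The main obstacle is ensuring Proposition~\ref{prop:F2-prime-twm} applies to each $T_i$ separately. Its proof needs only $T_i$ mixing (so aperiodic and ergodic), and primality is used there only to deduce primality of the action, so this is fine. The step to check carefully is the relative weak mixing remark: Remark~\ref{remark: relative weak mixing for diagonal action wrt a coordinate factor} is stated for ergodic $T\times S$, and ergodicity of $\rho(g)$ holds because it is weakly mixing. A secondary point is that the factor $(X_1,\nu_1)$ is nonatomic, so "non-trivial" is meaningful and essential freeness there is exactly freeness of $\rho_1$.
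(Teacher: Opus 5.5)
Your proposal is correct and follows the paper's proof. The paper takes mixing prime $T_i$ from the coordinate-factor lemma, builds free, prime, pointwise weakly mixing free-group actions with first generator $T_i$, and forms their product. It then classifies the factors by invariance under the first generator, derives relative weak mixing from the relative-square remark, and gets essential freeness from a single coordinate.

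The differences are cosmetic. You fix $k=2$ and form the product over $\F_2$ before restricting to $H_n$. The paper allows any $k\ge 2$ and restricts each coordinate action to $\F_n$ before taking the product, which gives the same result.
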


\begin{proof}
Fix an integer $k\geq2$. By
Lemma~\ref{lem: non-prime Z-actions with all factor maps controlled},
we can choose mixing prime transformations $T_i$ on $(X_i,\nu_i)$,
$1\leq i\leq k$, whose product has exactly the coordinate factor
algebras. Fix a free generator $a_1$ of $\F_n$. The construction in
the proof of Theorem~\ref{thm:all-free-ranks} gives, for each $i$, a
free, prime, totally weakly mixing action
$\beta_i$ of $\F_n$ on $(X_i,\nu_i)$ with $\beta_i(a_1)=T_i$.
Let
$(X,\nu)=\bigl(\prod_{i=1}^kX_i,\bigotimes_{i=1}^k\nu_i\bigr)$
and $\beta=\beta_1\times\cdots\times\beta_k$.
Then $\beta$ is pointwise weakly mixing, since each
$\beta_i$ is pointwise weakly mixing. %Lemma~\ref{lem:twm-elementwise} therefore
%shows that $\beta$ is totally weakly mixing. 
It is essentially free because the
fixed-point set of $\beta_g$ is contained in the inverse image of the
null fixed-point set of $(\beta_1)_g$ for every $g\neq e$.

Every $\beta$-invariant factor algebra is invariant under
$\beta(a_1)=T_1\times\cdots\times T_k$, and is therefore a coordinate
algebra $\mathcal A_I$ for some $I\subseteq\{1,\ldots,k\}$. Conversely,
each $\mathcal A_I$ is invariant under the product action $\beta$.
Thus, the factors of $\beta$ are exactly its coordinate factors.
Since $k\geq2$ and the coordinate spaces are nonatomic, a
one-coordinate factor is non-trivial and proper, so $\beta$ is not
prime.

Fix $g\in\F_n\setminus\{e\}$ and a proper subset
$I\subsetneq\{1,\ldots,k\}$. Write
$X_I=\prod_{i\in I}X_i$ and $\nu_I=\bigotimes_{i\in I}\nu_i$, with
the empty product interpreted as the one-point probability space.
The relative square of $X$ over its coordinate factor $X_I$ identifies
with $(X_I\times X_{I^c}\times X_{I^c},
\nu_I\otimes\nu_{I^c}\otimes\nu_{I^c})$.
Under this identification, the induced transformation is
$\beta_{I,g}\times\beta_{I^c,g}\times\beta_{I^c,g}$, where
$\beta_{J,g}=\prod_{i\in J}(\beta_i)_g$. This transformation is
ergodic, being a finite product of weakly mixing transformations
and possibly a one-point factor. Hence $\beta_g$ is weakly mixing
relative to $\mathcal A_I$, by
Remark~\ref{remark: relative weak mixing for diagonal action wrt a coordinate factor}.

Finally, if $I\neq\varnothing$, choose $i\in I$. For every $g\neq e$,
the fixed-point set of $\beta_{I,g}$ is contained in the inverse image
of $\operatorname{Fix}((\beta_i)_g)$ and consequently has measure zero.
Thus, every non-trivial factor is essentially free.
\end{proof}

\bibliography{30thsept}
\bibliographystyle{plain}
\end{document}